\documentclass[11pt, oneside]{amsart}
\usepackage[text={5.58in,8.5in},centering,letterpaper,dvips]{geometry}
\usepackage[dvipsnames]{xcolor}
\usepackage{graphicx}
\usepackage{subcaption}
\usepackage{amsfonts}
\usepackage{epsf}
\usepackage{amssymb}
\usepackage{amsmath}
\usepackage{amscd}
\usepackage{tikz}
\usepackage{pdfpages}
\usepackage{fancyhdr}
\usepackage{setspace}
\usepackage{hyperref}
\usepackage[all]{xy}
\usetikzlibrary{matrix}
\usepackage{verbatim}
\usepackage{enumerate}


\theoremstyle{theorem}
\newtheorem{theorem}{Theorem}[section]
\newtheorem{proposition}[theorem]{Proposition}
\newtheorem{lemma}[theorem]{Lemma}
\newtheorem{question}[theorem]{Question}
\newtheorem{corollary}[theorem]{Corollary}
\newtheorem{conjecture}[theorem]{Conjecture}

\theoremstyle{definition}

\newtheorem{remark}[theorem]{Remark}

\newtheorem*{heuristic}{Heuristic}

\newcommand{\F}{\mathcal{F}}
\newcommand{\Z}{\mathbb{Z}}

\newcommand{\R}{\mathfrak{R}}
\newcommand{\D}{\mathcal{D}}

\newcommand{\A}{\alpha}
\newcommand{\n}{\beta}
\newcommand{\X}{\times}
\newcommand{\pd}{\partial}
\newcommand{\eps}{\epsilon}
\newcommand{\Ss}{\mathcal{S}}

\newcommand{\K}{\mathcal{K}}
\newcommand{\sgn}{\text{sgn}}
\newcommand{\ord}{\text{ord}}


\makeatletter
\def\@seccntformat#1{%
  \protect\textup{\protect\@secnumfont
    \ifnum\pdfstrcmp{subsection}{#1}=0 \bfseries\fi
    \csname the#1\endcsname
    \protect\@secnumpunct
  }%
}  
\makeatother


\makeatletter
\newtheorem*{rep@theorem}{\rep@title}
\newcommand{\newreptheorem}[2]{%
\newenvironment{rep#1}[1]{%
 \def\rep@title{#2 \ref{##1}}%
 \begin{rep@theorem}}%
 {\end{rep@theorem}}}
\makeatother

\newreptheorem{theorem}{Theorem}
\newreptheorem{lemma}{Lemma}
\newreptheorem{question}{Question}
\newreptheorem{corollary}{Corollary}
\newreptheorem{proposition}{Proposition}


\topmargin = -.25in 
\textwidth = 6in
\textheight = 8.75in
\oddsidemargin = .25in
\evensidemargin = 0in
\begin{document}

\rhead{\thepage}
\lhead{\author}
\thispagestyle{empty}


\raggedbottom
\pagenumbering{arabic}
\setcounter{section}{0}


\title{Bounding the ribbon numbers of knots and links}

\author{Stefan Friedl}

\author{Filip Misev}

\author{Alexander Zupan}

\begin{abstract}
The ribbon number $r(K)$ of a ribbon knot $K \subset S^3$ is the minimal number of ribbon intersections contained in any ribbon disk bounded by $K$.  We find new lower bounds for $r(K)$ using $\det(K)$ and $\Delta_K(t)$, and we prove that the set $\mathfrak{R}_r~=~\{\Delta_K(t)~:~r(K)~\leq~r\}$ is finite and computable.  We determine $\mathfrak{R}_2$ and $\mathfrak{R}_3$, applying our results to compute the ribbon numbers for all ribbon knots with 11 or fewer crossings, with three exceptions.  Finally, we find lower bounds for ribbon numbers of links derived from their Jones polynomials.
\end{abstract}

\maketitle

\section{Introduction}

At the inception of knot theory, crossing number emerged as the original knot invariant, the minimal number of crossings in any diagram $D$ for $K$.  Crossing numbers have been used for over a century to organize knot tables by complexity, guided by the principle that for a given $n$, it is possible to enumerate all of the finitely many possible knot diagrams with exactly $n$ crossings (distinguishing them is a more subtle task).  In this paper, we seek to understand another invariant, the \emph{ribbon number} of a ribbon knot $K \subset S^3$.  We say that $K$ is \emph{ribbon} if $K$ is the boundary of an immersed disk $\D$ in $S^3$ with only ribbon singularities (see Figure~\ref{fig:challenge}), called a \emph{ribbon disk}, and the \emph{ribbon number} $r(K)$ of a ribbon knot $K$ is the minimum of $r(\D)$ taken over all ribbon disks $\D$ bounded by $K$.  Note that formally, a knot diagram $D$ is an immersed curve in 2-space, but we can perturb $D$ near its crossings to realize the corresponding knot $K$ embedded in 3-space.  Similarly, a ribbon disk $\D$ is immersed in 3-space, but we can perturb $\D$ near its ribbon intersections to construct a smoothly embedded disk in $D^4$.  This relationship, and the fact that both invariants involve minimizing self-intersections, gives rise to a heuristic:


\begin{heuristic}
Ribbon number is like a crossing number for ribbon disks.
\end{heuristic}

In trying to compute ribbon numbers, however, one encounters an issue that threatens the heuristic:  For any $r \geq 2$, there are infinitely many different knots $K$ such that $r(K) =r$, and so this presents a difficulty.  To address this issue, we use Alexander polynomials, proving that the set $\R_r$, the set of all Alexander polynomials $\Delta_K(t)$ of ribbon knots $K$ such that $r(K) \leq r$, is a finite set.

\begin{theorem}\label{thm:main2}
For each $r$, the set $\R_r = \{\Delta_K(t) : r(K) \leq r\}$ is finite and computable.  In particular,
\begin{eqnarray*}
\R_2 &=& \{1, \Delta_{3_1 \# \overline{3_1}}, \Delta_{6_1}\}; \\
\R_3 &=&  \{1, \Delta_{3_1 \# \overline{3_1}}, \Delta_{6_1}, \Delta_{8_8}, \Delta_{8_9}, \Delta_{9_{27}}, \Delta_{9_{41}}, \Delta_{10_{137}}, \Delta_{10_{153}}, \Delta_{11n_{116}}\}.
\end{eqnarray*}
\end{theorem}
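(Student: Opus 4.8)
The plan is to convert the geometric hypothesis $r(K)\le r$ into a bounded package of group-theoretic data and then feed that into Fox calculus. The first step is to put a minimizing ribbon disk into normal form: a ribbon disk $\D$ can be isotoped so that it consists of $\mu$ disjoint flat round disks $D_1,\dots,D_\mu$ lying in a plane in $S^3$, together with $\mu-1$ embedded bands $b_1,\dots,b_{\mu-1}$ attached to their boundary circles, whose fusion graph (disks as vertices, bands as edges) is a tree, and whose only singularities are transverse arcs in which a band passes through one of the disks; the number of such arcs is unchanged by the isotopy, hence equals $r(\D)$. Taking meridians $x_1,\dots,x_\mu$ of the disks gives the \emph{ribbon presentation}
\[
\pi_1\!\left(B^4\setminus N(\D)\right)=\bigl\langle\, x_1,\dots,x_\mu \ \bigm|\ x_{j_i}=w_i\,x_{k_i}\,w_i^{-1},\ i=1,\dots,\mu-1 \,\bigr\rangle ,
\]
where $w_i$ is the word in the $x_\ell^{\pm1}$ recording, in order and with sign, the disks that band $b_i$ pierces, so that $\sum_i|w_i|=r(\D)$. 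Whenever some $w_i$ is empty the relator becomes $x_{j_i}=x_{k_i}$; deleting it and identifying those two generators (equivalently, contracting that edge of the fusion tree) yields a presentation of the same group of the same type, with one fewer generator and relator and with $\sum_i|w_i|$ unchanged. After finitely many such contractions I obtain, from any $\D$ with $r(\D)\le r$, a ribbon presentation with $\mu\le r+1$ generators, $\mu-1$ relators, and $1\le|w_i|$, $\sum_i|w_i|\le r$.

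Next I would read off $\Delta_K(t)$. Writing $W=B^4\setminus N(\D)$, the boundary $\partial W$ is $0$-surgery on $K$, $H_1(W)\cong\Z$ is generated by any meridian $x_\ell$, and $H_2(W)=0$; the Poincar\'e--Lefschetz duality argument behind the Fox--Milnor theorem then gives $\Delta_K(t)\doteq\Delta_W(t)\,\Delta_W(t^{-1})$ (equality in $\Z[t^{\pm1}]$ up to a unit $\pm t^k$), where $\Delta_W$ is the order of the Alexander module of $W$. Because every $x_\ell$ maps to $t$, that module is presented by the $(\mu-1)\times\mu$ Fox Jacobian of the ribbon presentation, evaluated under the abelianization $\Z[\pi]\to\Z[t^{\pm1}]$; each row of this matrix sums to zero, so all column-deletion determinants agree up to sign, and $\Delta_W$ is computed as one of them by the usual recipe (up to units and a possible factor of $t-1$). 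Hence $\Delta_K$ is an explicit function of the combinatorial data of the ribbon presentation. Since there are only finitely many trees on at most $r+1$ vertices and finitely many tuples of words $(w_i)$ with $\sum_i|w_i|\le r$ over an alphabet of size at most $2(r+1)$, the set $\R_r$ is finite, and the procedure is algorithmic: enumerate the presentations, apply Fox calculus, symmetrize and normalize, discard duplicates (and, if one wants equality rather than containment, discard the non-realizable candidates). The same bounds yield the numerical lower bounds for $r(K)$ advertised in the abstract, controlling $\deg\Delta_K$ and $\det(K)=|\Delta_W(-1)|^2$ in terms of $r$.

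Finally, for $\R_2$ and $\R_3$ I would carry out the enumeration explicitly. For $r=2$ one has $\mu\le 3$, so after the contractions the presentation is trivial (giving $\Delta_K\doteq1$), or has a single relator with $|w_1|\in\{1,2\}$, or has two relators with $|w_1|=|w_2|=1$; exploiting the obvious simplifications (one may take each $w_i$ cyclically reduced and not beginning or ending with $x_{j_i}^{\pm1}$ or $x_{k_i}^{\pm1}$, since $x_{j_i}=w_ix_{k_i}w_i^{-1}$ absorbs such letters) leaves a short list, whose Alexander polynomials are exactly $1$, $\Delta_{3_1\#\overline{3_1}}$ and $\Delta_{6_1}$ --- for instance $\mu=2$, $w_1=x_2x_1^{-1}$ gives $\Delta_W\doteq 2-t$ and $\Delta_K\doteq\Delta_{6_1}$, while $w_1=x_2x_1$ gives $\Delta_W\doteq t^2-t+1$ and $\Delta_K\doteq\Delta_{3_1\#\overline{3_1}}$. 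The case $r=3$ is the same in spirit, with $\mu\le4$ and $\sum_i|w_i|\le3$, but the enumeration is large enough to warrant computer assistance; deduplicating the resulting Alexander polynomials should produce the stated ten. To see that each listed polynomial really lies in $\R_r$, and is not merely in the computed superset, I would exhibit on each of the tabulated knots ($6_1$, $8_8$, $8_9$, $9_{27}$, $9_{41}$, $10_{137}$, $10_{153}$, $11n_{116}$, and $3_1\#\overline{3_1}$) a concrete ribbon disk with the required number of singularities. I expect the main obstacle to be exactly this last point together with completeness of the enumeration: one must be certain that the normal form of the first step never forces extra ribbon singularities, and that the word-pruning simplifications are valid, since a single overlooked family of presentations could enlarge the list for $r=3$.
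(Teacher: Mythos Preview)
Your approach is correct and runs parallel to the paper's, with a different choice of machinery at the computational step. Both arguments encode a disk-band presentation of a ribbon disk as a tree decorated with ordered, signed labels on the edges (the paper calls this a \emph{ribbon code}; your data $(j_i,k_i,w_i)$ is the same thing), observe that there are only finitely many such objects with at most $r$ markings once empty edges are contracted, and then enumerate. The divergence is in how one shows that this combinatorial object determines $\Delta_K$. You pass through $\pi_1(B^4\setminus N(\D))$, compute $\Delta_{\D}$ by Fox calculus on the ribbon presentation, and invoke $\Delta_K\doteq\Delta_{\D}(t)\Delta_{\D}(t^{-1})$. The paper instead smooths the ribbon singularities to get a Seifert surface, writes down an explicit basis for $H_1$, and shows that the resulting Seifert matrix has block form $\begin{pmatrix}0&X\\Y&Z\end{pmatrix}$ with $X,Y$ (and hence $\Delta_K=\det((X-tY^T)(Y-tX^T))$) determined by the code alone. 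Your route is closer in spirit to Yasuda's treatment of ribbon $2$-knots (which the paper cites for the coefficient bounds); the paper's Seifert-matrix route avoids appealing to the factorization lemma and makes the $\Delta_K$ computation entirely linear-algebraic, which is convenient for the hand enumeration of $\R_2$ and $\R_3$.

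Two small points. First, your worry that the normal form might introduce extra singularities is legitimate to raise but has a standard resolution: cutting the abstract disk along the properly embedded preimage arcs $a'_\ell$ of the ribbon singularities yields a disk-band presentation of the \emph{same} immersed disk with exactly the original $r$ intersections. Second, your simplification ``$w_i$ not beginning or ending with $x_{j_i}^{\pm1}$ or $x_{k_i}^{\pm1}$'' is slightly too strong as stated; the geometric move (and the group-theoretic absorption you describe) only removes $x_{j_i}^{\pm1}$ at the \emph{start} and $x_{k_i}^{\pm1}$ at the \emph{end}. The paper also uses two further reductions you omit---deleting a valence-one or valence-two vertex whose index never appears as a label---which are not needed for finiteness but substantially shrink the case analysis for $r=3$, allowing it to be done by hand rather than by computer.
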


This theorem is sufficiently powerful to allow us to determine the ribbon numbers for all ribbon knots in the knot table up to 11 crossings, with three exceptions: $10_{123}$, $11a_{164}$, and $11a_{326}$, whose (potentially minimal) ribbon disks are shown in Figure~\ref{fig:challenge}.

\begin{theorem}
The ribbon numbers for knots up to 11 crossings, except for $10_{123}$, $11a_{164}$, and $11a_{326}$, are computed in Table~\ref{table1} (knots up to 10 crossings) and Table~\ref{table2} (knots with 11 crossings).
\end{theorem}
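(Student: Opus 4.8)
The plan is to treat this as a finite verification. The ribbon knots with at most $11$ crossings form a known finite list --- all slice knots in this range are ribbon, and ribbon disks for them are available from the literature --- so it suffices, for each such knot $K$, to pin $r(K)$ between an explicit upper bound and a lower bound coming from the earlier results.

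For the upper bounds I would exhibit, for each $K$ on the list, an explicit ribbon disk $\D$ and read off its number of ribbon singularities $r(\D)$, giving $r(K)\le r(\D)$. Most of these disks are already available: many ribbon knots through $11$ crossings admit symmetric-union presentations, which directly yield ribbon disks, and band presentations for the remaining ones appear in the literature or can be found by hand. In the range under consideration the disks that arise have only a few ribbon singularities, so only a handful of small upper bounds need to be certified.

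For the lower bounds I would combine three ingredients. First, a nontrivial ribbon knot has $r(K)\ge 2$, since a nontrivial knot cannot bound a ribbon disk with a single ribbon singularity; this already settles knots like $3_1\#\overline{3_1}$ and $6_1$, whose Alexander polynomials lie in $\R_2$. Second, Theorem~\ref{thm:main2} supplies the decisive obstruction: because $\R_2$ and $\R_3$ are the explicit finite sets listed there, we get $r(K)\ge 3$ whenever $\Delta_K(t)\notin\R_2$ and $r(K)\ge 4$ whenever $\Delta_K(t)\notin\R_3$, a finite check against the tabulated Alexander polynomials. Third, for the knots whose Alexander polynomial is not yet excluded we invoke the sharper bounds in terms of $\det(K)$ and of $\Delta_K(t)$ established earlier to rule out smaller values of $r(K)$ when possible. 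Running these checks, for every ribbon knot with at most $11$ crossings other than $10_{123}$, $11a_{164}$, and $11a_{326}$ the resulting lower bound meets the number $r(\D)$ of the disk we exhibited, so $r(K)=r(\D)$ is the value recorded in Table~\ref{table1} or Table~\ref{table2}. For the three exceptions the obstructions determine $r(K)$ only up to an ambiguity of one, while the ribbon disk in Figure~\ref{fig:challenge} realizes the larger endpoint, leaving a single undecided case for each.

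The main obstacle is not the lower bounds, which Theorem~\ref{thm:main2} together with the determinant and Alexander polynomial bounds hand us, but certifying that the upper bounds are sharp --- that is, exhibiting genuinely minimal ribbon disks. Membership $\Delta_K(t)\in\R_r$ is only necessary for $r(K)\le r$ and never by itself produces a disk; one must still construct it. For most knots in the range a little experimentation suffices, but for $10_{123}$, $11a_{164}$, and $11a_{326}$ this is exactly where the argument stalls: closing the gap would require either a more efficient disk or an obstruction finer than those available from Theorem~\ref{thm:main2}.
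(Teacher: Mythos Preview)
Your overall plan --- finite list, explicit ribbon disks for upper bounds, obstructions for lower bounds --- matches the paper's approach, but your stated lower-bound toolkit is incomplete and the argument genuinely fails for several knots in the tables.

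The three ingredients you list (nontriviality gives $r\ge 2$; membership in $\R_2$ or $\R_3$; the $\det$ and $\Delta_K$ bounds from Theorem~\ref{thm:main1} and Corollary~\ref{cor:main1}) are all Alexander-module obstructions, and by Remark~1.5 the third is weaker than the second. But five of the $11$-crossing knots in Table~\ref{table2} have ribbon number $3$ while their Alexander polynomials lie in $\R_2$: $11n_{42}$ has $\Delta=1$; $11n_{73}$ and $11n_{74}$ have $\Delta=\Delta_{3_1\#\overline{3_1}}$; and $11n_{67}$, $11n_{97}$ have $\Delta=\Delta_{6_1}$. No bound computed from $\Delta_K$ or $\det K$ can separate these from the genuine ribbon-number-$2$ knots sharing the same polynomial, so your scheme leaves $r(K)\in\{2,3\}$ unresolved for all five. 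The paper closes these cases with two tools you omit: the elementary genus bound $g(K)\le r(K)$ (Lemma~\ref{lem:genus}), which handles $11n_{73}$ since $g(11n_{73})=3$, and the Mizuma--Tsutsumi crosscap criterion (Proposition~\ref{prop:MT}), which for $11n_{42}$, $11n_{67}$, $11n_{74}$, $11n_{97}$ rules out $r=2$ because each has $g=2$ and $\gamma>2$. The genus bound is also what the paper actually invokes for many other entries (e.g.\ $9_{27}$, $10_{153}$, $11n_4$), though there it happens to be recoverable from $\R_2$.

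A smaller inaccuracy: for $11a_{164}$ and $11a_{326}$ the tables record $r\in\{4,5,6\}$, an ambiguity of three values, not one as you state.
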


\begin{figure}[h!]
  \centering
  \includegraphics[width=.3\linewidth]{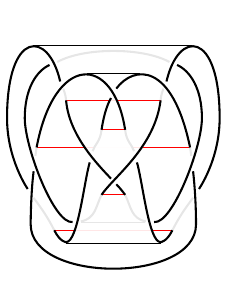} \qquad
    \includegraphics[width=.2\linewidth]{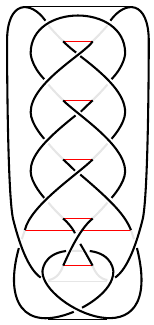} \qquad
      \includegraphics[width=.2\linewidth]{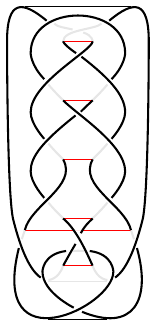}
  \caption{Ribbon disks for $10_{123}$ (left), $11a_{164}$ (center), and $11a_{326}$ (right).  Do the disks realize the ribbon numbers of these knots?}
  \label{fig:challenge}
\end{figure}

The proof of Theorem~\ref{thm:main2} involves associating to any ribbon disk $\D$ a combinatorial object called a \emph{ribbon code}, a tree with markings on its edges, that can be used to compute $\Delta_{\pd \D}(t)$.  We then obtain the explicit descriptions of $\R_2$ and $\R_3$ by enumerating all possible ribbon codes.

Along the way to finding $\R_2$ and $\R_3$, we develop more general bounds for all ribbon numbers involving Alexander polynomials and knot determinants.  It is well-known that if $K$ is ribbon, then its Alexander polynomial $\Delta_K(t)$ can be expressed as $\Delta_K(t) = f(t) \cdot f(t^{-1})$ for some $f(t) \in \Z[t,t^{-1}]$ such that $f(1) = \pm 1$.  Using a result of Yasuda~\cite{YasudaAlex}, we prove

\begin{theorem}\label{thm:main1}
Suppose $K$ is a ribbon knot such that $r(K) = r$.  Then $\Delta_K(t)$ can be expressed as $f(t) \cdot f(t^{-1})$ for $f(t) \in \Z[t]$ such that
\[ f(t) = a_0 + a_1t + \dots + a_rt^r \qquad \text{and} \qquad |a_i| \leq \binom{r}{i}.\]
\end{theorem}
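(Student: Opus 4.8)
The plan is to use Yasuda's theorem to express the Fox--Milnor factor of $\Delta_K(t)$ as the determinant of an explicit integer matrix pencil whose size is controlled by $r$, and then to read off the coefficient bound by a multilinear expansion of that determinant combined with a total unimodularity statement.

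\emph{Reduction to the Fox--Milnor factor.} Since $K$ is ribbon it bounds a ribbon disk $\D$ in $D^4$ with $r(\D)=r$, and the refinement of the Fox--Milnor theorem for slice disks gives $\Delta_K(t) = f(t)f(t^{-1})$ up to units, where $f(t)$ is the order of the Alexander module of the exterior of $\D$; moreover $f(1)=\pm 1$, which in any case follows from $\Delta_K(1)=\pm 1$ since then $f(1)^2=\pm 1$. Yasuda's result computes $f(t)$ from a ribbon presentation of $\pi_1(D^4\setminus\D)$ obtained from $\D$: one takes a generator for each $0$-handle and a conjugation relator for each band (there are at most $r$ bands, since after cancelling any band carrying no ribbon singularity each band passes through some $0$-handle, so the statement only improves if there are fewer than $r$ of them), and a Fox-calculus computation presents the Alexander module by a square matrix whose entries are governed by the conjugating words, i.e.\ by the ribbon singularities. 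The first task is to put this matrix, after clearing denominators and expanding each power $t^{d}$ that occurs via $t^{d}-1=(t-1)(1+t+\dots+t^{d-1})$ and its negative-exponent analogue, into the normal form $f(t)=\det(C_0+tC_1)$ (up to a unit $\pm t^{k}$) with $C_0,C_1$ integer matrices of size $r$; the expansion is exactly what promotes the size from the number of bands to the number $r$ of ribbon singularities, with one new row and column contributed by each individual piercing.

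\emph{The determinantal bound.} Granting the normal form $f(t)=\det(C_0+tC_1)$, expand the determinant multilinearly in its rows: for each $k$ the coefficient of $t^{k}$ equals $\sum_{|S|=k}\det(C_S)$, where $S$ ranges over $k$-element subsets of $\{1,\dots,r\}$ and $C_S$ is the matrix whose $j$-th row is that of $C_1$ if $j\in S$ and that of $C_0$ otherwise. Each such $C_S$ is an $r\times r$ submatrix of the $2r\times r$ matrix $\widehat{C}$ obtained by stacking $C_0$ above $C_1$, so once we know $\widehat{C}$ is totally unimodular we get $\det(C_S)\in\{-1,0,1\}$ and hence $|a_k|\le\binom{r}{k}$. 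Normalising $f$ to have nonzero constant term then produces $f(t)=a_0+\dots+a_rt^r\in\Z[t]$ with $f(1)=\pm 1$ and $|a_i|\le\binom{r}{i}$, which is the claim.

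\emph{Total unimodularity, the main obstacle.} Everything thus reduces to showing that $\widehat{C}$ is totally unimodular, and I expect this to be the hard part. It should flow from the tree structure underlying the ribbon code of $\D$: the $0$-handles and bands form a tree, signed incidence matrices of trees (indeed forests) are totally unimodular, and $\widehat{C}$ is assembled from such incidence data together with the $\pm 1$ intersection signs of the ribbon singularities and the orientations of the bands. Two points require care: first, a single band may pass through a given $0$-handle several times, so the rows coming from those piercings must be arranged so that total unimodularity survives --- this is precisely why one must work singularity by singularity (expanding each conjugating word letter by letter) rather than band by band; second, one must check that after the normalisation no negative power of $t$ remains, i.e.\ that the $t$-span of $\det(C_0+tC_1)$ really is at most $r$. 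The cleanest route to the total unimodularity is likely an induction on $r$, removing one ribbon singularity at a time --- corresponding to a leaf-most piercing in the ribbon code --- and verifying that the resulting row/column operation on $\widehat{C}$ preserves total unimodularity; matching this inductive step to an actual simplification of the ribbon code is where the real work lies.
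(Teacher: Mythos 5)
Your factorization step is fine and coincides with the first ingredient of the paper's argument (the Fox--Milnor refinement for the ribbon disk exterior, which is the first half of Lemma~\ref{lem:double}). But the actual content of the theorem is the coefficient bound $|a_i|\leq\binom{r}{i}$, and your proposal does not establish it: you reduce it to two claims, namely (i) that the disk-exterior Alexander polynomial can be linearized as $f(t)=\det(C_0+tC_1)$ with $C_0,C_1$ integer matrices of size exactly $r$, one row and column per ribbon singularity, with the $t$-span under control, and (ii) that the stacked $2r\times r$ matrix $\widehat{C}$ is totally unimodular --- and you prove neither. You yourself flag (ii) as ``the hard part'' and ``where the real work lies,'' and (i) is only sketched. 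Nothing about the tree structure of a disk-band presentation obviously forces total unimodularity once a band is allowed to pierce the same disk several times with mixed signs and the conjugating words are arbitrary, so you have reduced the theorem to an unverified and nonobvious lemma rather than to known facts; as written this is a plan for a proof of the only nontrivial part of the statement, not a proof.

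The paper's route avoids this entirely: it doubles the ribbon disk $\D$ along its boundary to obtain a ribbon 2-knot $\K(\D)$ bounding an immersed $D^3$ with the same number $r$ of ribbon intersections, quotes Yasuda's theorem for ribbon 2-knots (Theorem~\ref{thm:yasuda}) as a black box to obtain a representative $f(t)=a_0+a_1t+\dots+a_rt^r$ with $|a_i|\leq\binom{r}{i}$, and then uses the second half of Lemma~\ref{lem:double} (a Mayer--Vietoris argument) to identify $\Delta_{\K(\D)}(t)$ with the Fox--Milnor factor $\Delta_{\D}(t)$, giving $\Delta_K(t)=f(t)\cdot f(t^{-1})$. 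In effect you are attempting to reprove Yasuda's coefficient bound from scratch in the disk setting via a determinantal/total-unimodularity argument instead of transferring it from the 2-knot setting by doubling. To complete your approach you would have to supply the missing linearization and unimodularity (or some other coefficient estimate in their place); alternatively, the gap closes immediately if you apply Theorem~\ref{thm:yasuda} to the double $\K(\D)$ and justify, as Lemma~\ref{lem:double} does, that its Alexander polynomial equals the Fox--Milnor factor you already have.
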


As a corollary, we obtain

\begin{corollary}\label{cor:main1}
Suppose $K$ is a ribbon knot.  Then
\[ \det K \leq (2^{r(K)}-1)^2.\]
\end{corollary}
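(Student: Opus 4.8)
The plan is to deduce Corollary~\ref{cor:main1} directly from Theorem~\ref{thm:main1} by evaluating at $t = -1$. Recall that $\det K = |\Delta_K(-1)|$. By Theorem~\ref{thm:main1}, if $r(K) = r$ then $\Delta_K(t) = f(t) f(t^{-1})$ with $f(t) = a_0 + a_1 t + \dots + a_r t^r$ and $|a_i| \leq \binom{r}{i}$. Plugging in $t = -1$, we get $\Delta_K(-1) = f(-1) f(-1) = f(-1)^2$, so $\det K = |f(-1)|^2 = f(-1)^2$.

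The next step is to bound $|f(-1)|$. We have $f(-1) = \sum_{i=0}^r (-1)^i a_i$, hence by the triangle inequality
\[ |f(-1)| \leq \sum_{i=0}^r |a_i| \leq \sum_{i=0}^r \binom{r}{i} = 2^r. \]
This would give $\det K \leq 4^{r(K)}$, which is weaker than the claimed bound $(2^{r(K)} - 1)^2$. To sharpen it, I would observe that $f(1) = \pm 1$ forces $a_0 = \pm 1$ (since $f(t) \in \Z[t]$ and $f(1) = \sum a_i = \pm 1$, but more to the point the constant term $a_0$ of $f$ equals $\Delta_K$'s lowest coefficient up to normalization, which is a unit). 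Actually the cleaner route: since $\Delta_K(t)$ is a genuine Alexander polynomial it is nontrivial in a controlled way, but the key inequality I want is $|f(-1)| \leq 2^r - 1$. This should follow because the terms cannot all line up in sign: $f(-1) = (a_0 - a_1 + a_2 - \dots)$ and separately $f(1) = a_0 + a_1 + \dots = \pm 1$. Adding, $f(1) + f(-1) = 2(a_0 + a_2 + \dots)$ and subtracting, $f(1) - f(-1) = 2(a_1 + a_3 + \dots)$; combined with $|a_i| \le \binom{r}{i}$ one gets $|f(-1)| \le |f(1)| + 2\sum_{i \text{ odd}}|a_i|$ or a similar estimate, and the binomial coefficients for a fixed parity class sum to $2^{r-1}$, which is still not quite enough. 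The sharp statement instead comes from noting $a_0 = \pm 1 = a_r$ is impossible to improve, but we can say $|a_0| = 1$ and bound $|f(-1)| \le |a_0| + \sum_{i=1}^{r}|a_i| \le 1 + (2^r - 2) + |a_r|$ — so I would need $|a_r| = 1$ as well, i.e. $f$ is reciprocal-monic. This holds because $\Delta_K(t)$ has unit leading and trailing coefficients and $f(t) f(t^{-1})$ has leading coefficient $a_r \cdot (\text{trailing of } f) $ hmm.

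The cleanest argument I expect: write $f(t) = \pm(1 + a_1 t + \dots + a_{r-1} t^{r-1} \pm t^r)$, so both extreme coefficients are $\pm 1$, and then
\[ |f(-1)| \le 1 + \sum_{i=1}^{r-1} \binom{r}{i} + 1 = 2 + (2^r - 2) = 2^r, \]
which again gives only $4^r$. So the factor-of-two improvement to $(2^r - 1)^2$ must use that $f(-1)$ and $f(1) = \pm 1$ have the same parity structure: $f(1) \equiv f(-1) \pmod 2$ since $f(1) - f(-1) = 2\sum_{i \text{ odd}} a_i$. Thus $f(-1)$ is odd, and being at most $2^r$ in absolute value, it is at most $2^r - 1$. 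Therefore $\det K = f(-1)^2 \le (2^r - 1)^2 = (2^{r(K)} - 1)^2$, completing the proof.

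I expect the main obstacle to be exactly this last parity step — recognizing that one needs $f(-1)$ to be odd (equivalently $f(1)$ odd, which is immediate from $f(1) = \pm 1$) to upgrade the crude bound $2^r$ to $2^r - 1$. Everything else is a routine application of Theorem~\ref{thm:main1}, the identity $\det K = |\Delta_K(-1)|$, the triangle inequality, and the binomial identity $\sum_i \binom{r}{i} = 2^r$. One should also double-check the edge case $r(K) \le 1$, but ribbon disks have at least one ribbon singularity unless $K$ is unknotted, where $\det K = 1 = (2^0 - 1)^2$ would fail — so presumably the convention is $r(\text{unknot}) = 0$ handled separately or the statement is understood for nontrivial ribbon knots; I would add a remark clarifying this.
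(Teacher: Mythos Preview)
Your argument is correct and essentially identical to the paper's: apply the triangle inequality and the binomial identity to get $|f(-1)| \leq 2^r$, then use parity to sharpen to $2^r - 1$ (the paper phrases the parity step as ``$\det K$ is odd, hence $|f(-1)|$ is odd,'' while you phrase it as $f(-1) \equiv f(1) = \pm 1 \pmod 2$; these are the same observation). Your remark about the $r(K)=0$ edge case is well taken---the paper does not address it explicitly.
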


\begin{remark}
The curious reader might wonder how the bounds given by Theorem~\ref{thm:main2}, Theorem~\ref{thm:main1}, and Corollary~\ref{cor:main1} are related.  For $r \leq 3$, Theorem~\ref{thm:main2} is strictly stronger than Theorem~\ref{thm:main1}, and Theorem~\ref{thm:main1} is strictly stronger than Corollary~\ref{cor:main1}:  For example, if $K$ is the knot $11n_{49}$ or $11n_{116}$,  Theorem~\ref{thm:main1} implies that $r(K) \geq 2$, while $\Delta_{K}(t) \notin \R_2$, and so Theorem~\ref{thm:main2} implies $r(K) \geq 3$.  For $K' = 10_3$, Theorem~\ref{thm:main1} yields $r(K') \geq 3$, while $\Delta_{K'}(t) \notin \R_3$, and so Theorem~\ref{thm:main2} says $r(K') \geq 4$.  For $K'' = 10_{153}$, we have $\det(K'') = 1$ and so Corollary~\ref{cor:main1} provides no useful information.  Theorem~\ref{thm:main1}, however, asserts that $r(K'') \geq 3$.
\end{remark}

Finally, we prove an analogue of Corollary~\ref{cor:main1} for ribbon links, where a ribbon link $L$ bounds a collection of immersed disks in $S^3$ with only ribbon singularities.  In~\cite{eisermann}, Eisermann proved that if $L$ is an $n$-component ribbon link, then its Jones polynomial $V_L(q)$ is divisible by the Jones polynomial of the $n$-component unlink, $(q+q^{-1})^{n-1}$, and so the \emph{generalized Jones determinant} $\det_n(V_L)$ can be defined as
\[ \text{det}_n(V_L) = \left(\frac{V_L(q)}{(q+q^{-1})^{n-1}}\right)_{q=i},\]
where the classical determinant $\det(L)$ agrees with $|\det_1(V_L)|$.  We prove

\begin{theorem}\label{thm:main3}
Suppose $L$ is an $n$-component ribbon link.  Then
\[ |\text{det}_n(V_L)| \leq 9^{r(L)}.\]
\end{theorem}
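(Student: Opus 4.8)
The plan is to mimic the structure of the proof of Corollary 1.5, but working multiplicatively with the Jones polynomial instead of additively with the Alexander polynomial. A ribbon link $L$ with $r(L) = r$ bounds a system of immersed disks $\D$ with exactly $r$ ribbon singularities. Each ribbon singularity can be resolved by a "fusion band" move: starting from the $n$-component unlink $U_n$, one builds $L$ by successively attaching bands, and each band attachment either merges two components or adds a self-band to one component, with the total number of bands controlled by $r$ (in fact, a system of $r$ ribbon disks on $n$ components corresponds to $n$ trivial disks with $r$ bands, since each band creates exactly one ribbon intersection after isotopy). So the combinatorial skeleton is: $L$ is obtained from $U_n$ by a sequence of $r$ band surgeries.

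First I would recall Eisermann's setup and the behavior of $V_L$ under band surgery. The key local input is a skein-type estimate: if $L'$ is obtained from $L$ by a single band move, the reduced Jones polynomials $V_L/(q+q^{-1})^{n-1}$ and $V_{L'}/(q+q^{-1})^{n'-1}$ differ by a bounded factor after evaluation at $q=i$. Concretely, I expect the local model to contribute a factor of absolute value at most $3$ at $q = i$ — this is where the constant $9 = 3^2$ in the statement comes from, presumably one factor of $3$ per half-twisted band in some normalization, or a factor of $9$ per ribbon singularity directly. The cleanest route is: expand $V_L$ via the Kauffman bracket skein relation at each of the $r$ ribbon singularities; each resolution splits into two terms weighted by powers of $A = q^{1/2}$, so after $r$ resolutions we get $2^r$ terms, each a link that is "ribbon-trivial" (an unlink or a connect sum of Hopf-type pieces) whose generalized Jones determinant is explicitly bounded. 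Summing $2^r$ terms each of controlled size, together with the $|A| = 1$ normalization at $q = i$, should yield $|\det_n(V_L)| \le 9^r$ after bookkeeping the writhe/framing correction factors $(-A^3)^{-w}$, which have absolute value $1$ at $q=i$.

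The main obstacle will be the bookkeeping around the \emph{reduced} Jones polynomial and the divisibility by $(q+q^{-1})^{n-1}$. When I resolve a ribbon singularity, the number of components of the intermediate links jumps around, so I cannot naively track $V_L/(q+q^{-1})^{n-1}$ resolution-by-resolution — I need Eisermann's theorem (or a relative version of it) to guarantee that each intermediate ribbon link of $n_j$ components has Jones polynomial divisible by $(q+q^{-1})^{n_j-1}$, and then control how the quotient changes. The subtlety is that a band move can change the component count by $\pm 1$, and correspondingly a factor of $(q+q^{-1})^{\pm 1}$ enters; at $q = i$ this factor is $i + i^{-1} = 0$, so a careless estimate would blow up or vanish. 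The right fix is to organize the induction so that the $(q+q^{-1})$ factors from merging and from Eisermann's divisibility exactly cancel against each other — this is precisely what makes $\det_n$ well-defined and finite — leaving only the bounded local skein contributions. So I would set up the induction on $r$ with hypothesis "$L$ obtained from $U_n$ by $r$ bands implies $|\det_n(V_L)| \le 9^r$," prove the base case $r = 0$ (the unlink, where $\det_n = 1$), and in the inductive step peel off one band, using Eisermann's divisibility to legitimize dividing by the appropriate power of $(q+q^{-1})$ and a direct bracket computation for the single-band local factor. Verifying that the local factor is bounded by $9$ (or by $3$ per half-band, compounding correctly) at $q = i$ is the one genuinely computational point, and getting the constant exactly $9$ rather than something larger is where care is needed.
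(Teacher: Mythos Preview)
Your high-level strategy --- induct on $r$, use a local skein identity at each ribbon singularity, exploit Eisermann's divisibility so that the $(q+q^{-1})$ factors cancel cleanly at $q=i$ --- is exactly the shape of the paper's argument. But there is a genuine error in how you set up the induction, and a missing ingredient in the local step.

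First, you repeatedly identify the ribbon number $r(L)$ with the number of bands: ``$L$ is obtained from $U_n$ by a sequence of $r$ band surgeries,'' and ``each band creates exactly one ribbon intersection after isotopy.'' This is false. In a disk--band presentation the number of bands is the fusion number, and a single band can pass through many disks, producing many ribbon intersections (or none). The paper's Lemma~\ref{lem:fusion} even records the inequality $\F(K)\le r(K)-1$. So an induction that peels off one \emph{band} per step does not decrement $r$ by one, and the whole bookkeeping collapses. The correct induction, as in the paper, peels off one \emph{ribbon singularity} at a time.

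Second, the local move at a ribbon singularity is not a single Kauffman resolution producing two terms. A ribbon singularity involves four crossings in the standard diagram, and the relevant identity is Eisermann's skein relation (the paper's Lemma~\ref{lem:skein}): it expresses $\langle D\rangle$ in terms of one diagram $D_0^n$ with the singularity removed, four further diagrams $D_1^n,\dots,D_4^n$ each with Euler characteristic $n$ and ribbon number $k-1$, and two diagrams $D_1^{n+1},D_2^{n+1}$ with Euler characteristic $n+1$. After dividing by $(-A^{-2}-A^2)^{n-1}$ and evaluating at $A=e^{-i\pi/4}$, the two Euler-characteristic-$(n+1)$ terms vanish (precisely the divisibility cancellation you anticipated), and the coefficients on the remaining five terms have absolute values $1,2,2,2,2$. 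The triangle inequality then gives the factor $1+2+2+2+2=9$, not $3^2$ from any half-twist heuristic. This specific seven-term identity from Eisermann is the key lemma you are missing; without it the ``bounded local factor'' step is just a hope.
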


\begin{remark}
The invariant $r(K)$ has seen relatively little attention in the knot theory literature.  To the best of our knowledge, it first appeared in~\cite{mizuma}, in which Mizuma proved that that for $K$ the Kinoshita-Terasaka knot $11n42$, $r(K) = 3$.  In~\cite{aceto}, Aceto examined the relationship between ribbon numbers and \emph{symmetric ribbon numbers}.  A related notion is the \emph{ribbon crossing number} of a ribbon 2-knot, which has been more thoroughly examined (see, for instance,~\cite{KanTak,YasudaBase,YasudaAlex}).  The reader should be aware that although there are earlier references to knots of ``ribbon number one" (see, for example,~\cite{bleiler-eudave} and~\cite{tanaka}), these instances refer to an invariant defined as the minimum number of bands in a disk-band presentation for a ribbon disk $\D$ bounded by $K$, now more commonly referred to as the \emph{fusion number} $\F(K)$ of $K$.  We define a disk-band presentation below in Section~\ref{sec:prelim}.
\end{remark}

\begin{remark}
A forthcoming manuscript~\cite{polymath} will compute the set $\R_4$ and will use this information to extend the work in this paper to the collection of 12-crossing ribbon knots.
\end{remark}

\subsection{Organization}

In Section~\ref{sec:prelim}, we introduce some elementary bounds on ribbon numbers, and as a proof of concept, we use these bounds to find the ribbon numbers of the knots $T_{p,q} \# \overline{T_{p,q}}$.  In Section~\ref{sec:doubles}, we relate ribbon disks to ribbon 2-knots, prove a folk theorem (Lemma~\ref{lem:double}) about the Alexander polynomial of a ribbon knot, and establish Theorem~\ref{thm:main1} and Corollary~\ref{cor:main1}.  In Section~\ref{sec:codes}, we introduce ribbon codes and explain a procedure by which a ribbon code determines the Alexander polynomial for the corresponding knot (Proposition~\ref{prop:block} and Corollary~\ref{cor:codes}).  In Section~\ref{sec:code}, we discuss simplification of ribbon codes and prove Propositions~\ref{prop:r2} and~\ref{prop:r3}, enumerating all possible Alexander polynomials of ribbon knots with ribbon number at most 2 and 3, respectively.  In Section~\ref{sec:tab}, we combine our results to tabulate ribbon numbers for ribbon knots up to 11 crossings, with data shown in Tables~\ref{table1} and~\ref{table2}.  In Section~\ref{sec:jones}, we pivot to ribbon links and prove Theorem~\ref{thm:main3}, a generalization of Corollary~\ref{cor:main1}.  Finally, in Section~\ref{sec:conj}, we state several conjectures and questions to motivate future work.

\subsection{Acknowledgements}

Part of this work was completed while the third author was a guest at MPIM, and he is grateful for the institute's support.  The third author also appreciates the hospitality of the first two authors during visits to the University of Regensburg, thanks Jeffrey Meier for helpful conversations, and acknowledges his Polymath Jr. REU group from the summer of 2023 for their energy and insights.  Finally, we thank the authors of~\cite{KSTI} for email exchanges related to their work.  The first and second author were supported by the CRC 1085 ``higher invariants'' at the University of Regensburg.  The third author was supported by NSF awards DMS-2005518 and DMS-2405301 and a Simons Fellowship.

\section{Preliminaries}\label{sec:prelim}

We work in the smooth category.  In this section, we state several elementary bounds for ribbon numbers and use these bounds to determine the ribbon numbers $r(T_{p,q} \# \overline{T_{p,q}})$, where $\overline{K}$ denotes the mirror image of $K$ and $T_{p,q}$ is the $(p,q)$-torus knot. First, we explore upper bounds, which we obtain by explicit construction and which are related to symmetric union presentations of ribbon knots.  A knot diagram $D^*$ is a \emph{symmetric union presentation} if $D^*$ has a vertical axis of symmetry $L$ such that
\begin{enumerate}
\item Outside a small neighborhood of $L$, the diagram $D^*$ has reflection symmetry over $L$, and
\item $D^*$ meets $L$ in two horizontal strands and some number (possibly zero) of additional crossings.
\end{enumerate}
See the left panel of Figure~\ref{fig:symm} for an example.  A more precise definition appears in~\cite{lamm1}.  It is known that every knot with a symmetric union presentation is ribbon (examples of ribbon disks arising from symmetric union presentations are shown in Figure~\ref{fig:challenge}).  In addition, all ribbon knots with 10 or fewer crossings admit a symmetric union presentation, but it is open whether every ribbon knot admits such a presentation~\cite{lamm1,lamm2,lamm3}.

Given a symmetric union presentation $D^*$, we form the corresponding \emph{partial diagram} $D$ by cutting $D^*$ along the axis of symmetry $L$, vertically smoothing each crossing, and connecting the two horizontal strands by a vertical arc, as shown at right in Figure~\ref{fig:symm}.  It can be shown that since $D^*$ represents a knot, $D$ is connected.  In the special case that $D^*$ has no crossings on $L$, then $D^* = D \# \overline{D}$, and so we see that a symmetric union presentation for a ribbon knot generalizes the well-known fact that for any knot $K$, the connected sum $K \# \overline{K}$ is ribbon.

\begin{figure}[h!]
  \centering
  \includegraphics[width=.6\linewidth]{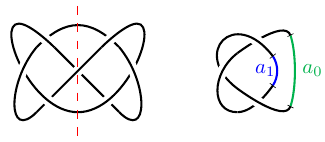}
  \caption{At left, an example of a symmetric union presentation $D^*$ for $6_1$.  At right, the corresponding partial diagram $D$, with vertical arcs labeled as in the proof of Lemma~\ref{lem:underpass}.}
\label{fig:symm}
\end{figure}

\begin{lemma}\label{lem:underpass}
Suppose $D^*$ is a symmetric union presentation for a ribbon knot $K$, where $D^*$ has partial diagram $D$, and a horizontal strand of $D$ is adjacent to $\ell$ consecutive under-crossings away from the axis of symmetry $L$.  Then
\[ r(K) \leq c(D) - \ell.\]
\end{lemma}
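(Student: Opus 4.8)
The plan is to give a direct construction: starting from the symmetric union presentation $D^*$ with partial diagram $D$, I build an explicit ribbon disk $\D$ for $K$ and count its ribbon intersections, showing the count is at most $c(D) - \ell$. The key observation is that a symmetric union presentation always yields a ribbon disk, and that the number of ribbon intersections of the natural such disk is controlled by the crossing number of the \emph{partial} diagram $D$ rather than $D^*$. Recall that $D$ is obtained from $D^*$ by cutting along the symmetry axis $L$, smoothing the crossings that lie on $L$, and joining the two horizontal strands by a vertical arc; the resulting $D$ is connected and represents some knot (or the unknot).

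First I would recall the standard construction of a ribbon disk from a symmetric union. Think of $D^*$ as living in a plane $P \subset S^3$, with $L$ the axis. The reflection across $P$ (or a suitable isotopy exploiting the near-symmetry of $D^*$ across $L$) lets one ``fold'' the diagram: one takes the obvious disk bounded by the partial diagram's knot $\pd D$ pushed off to one side, takes its mirror copy on the other side of $L$, and the two are joined along a band for each of the two horizontal strands, with the crossings on $L$ contributing the remaining band attachments. Carrying this out carefully, $K = \pd\D$ where $\D$ is assembled from two ``half-disks'' and a collection of bands, and each band passes through the rest of the disk creating ribbon intersections only where the partial diagram $D$ has a crossing. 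In particular $r(\D) \le c(D)$ in general. This is essentially the content of ``every symmetric union knot is ribbon,'' made quantitative.

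Next comes the improvement by $\ell$. The hypothesis is that a horizontal strand of $D$ runs into $\ell$ consecutive under-crossings, all on the same side, away from $L$. Near such a strand the band of $\D$ corresponding to that horizontal strand runs underneath $\ell$ sheets of the disk in succession. The point is that $\ell$ consecutive under-passes along a single arc can be removed \emph{for free}: one isotopes the band (which is an embedded sub-band of the immersed disk, not obstructed by anything since it is passing \emph{under} everything) to slide past all $\ell$ sheets at once, eliminating those $\ell$ ribbon intersections without introducing new ones. Concretely, a strand that only goes under can be pushed down in the $I$-direction of the disk's normal bundle past the sheets it was under; ``consecutive'' and ``away from $L$'' guarantee there is no crossing or band attachment interleaved that would obstruct this push. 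The remaining $c(D) - \ell$ crossings still each contribute at most one ribbon intersection, giving $r(K) \le r(\D) \le c(D) - \ell$.

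The main obstacle I expect is the bookkeeping in the first step: making precise how the partial diagram $D$ organizes the bands of $\D$, and verifying that each crossing of $D$ contributes exactly one ribbon intersection to the natural disk (and that crossings on $L$ behave correctly under the smoothing, contributing band attachments rather than extra intersections). One must be careful that the folding isotopy genuinely produces an immersed disk with only ribbon singularities and that the intersection count is exactly indexed by $c(D)$. The second step, removing the $\ell$ under-crossings, is geometrically intuitive — an under-passing arc is unobstructed from below — but needs the precise hypothesis ``consecutive'' and ``away from $L$'' to rule out an intervening band endpoint; I would phrase it as an explicit ambient isotopy of $\D$ supported in a neighborhood of the offending band. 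A figure (analogous to Figure~\ref{fig:symm}, with the vertical arcs labeled) would carry most of the weight of both steps.
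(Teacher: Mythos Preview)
Your proposal is correct and follows essentially the same approach as the paper. The paper makes your first step precise by deleting a neighborhood of the vertical arc $a_0$ from $D$ to obtain an arc diagram $D'$, embedding $D'$ and its mirror near $D^2\times\{\pm 1\}$ in $D^2\times I$, and connecting corresponding points by vertical fibers; this gives a ribbon disk for $J\#\overline J$ with exactly $c(D)$ ribbon intersections, the $\ell$ adjacent to an endpoint of the arc are removed by the isotopy you describe, and the axis crossings are then reinstated by inserting half-twists into the corresponding vertical rectangles, which adds no new ribbon intersections.
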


\begin{proof}
Consider $D$ as a diagram with underlying surface $S^2$, let $a_0$ denote the vertical arc of $D$ obtained by connecting horizontal strands of $D^*$, and let $a_1,\dots,a_k$ denote the arcs in $D$ obtained by smoothing the crossings of $D^*$ along the axis of symmetry $L$.  Remove a small disk neighborhood of $a_0$ to get a diagram $D'$ for a knotted arc $\A$ with underlying surface $D^2$.  By hypothesis, one endpoint of $\A^+$ is adjacent to the $\ell$ consecutive under-crossings.  Let $N = D^2 \X [-1-\eps,1+\eps]$ be an embedded 3-ball in $S^3$, and use the diagram $D'$ to embed $\A^+$ and $\A^- = \overline{\A^+}$ in small collar neighborhoods of $D^2 \X \{1\}$ and $D^2 \X \{-1\}$, respectively.

Now, each point $x^+ \in \A^+$ is connected via a subinterval $I_x$ of a vertical fiber in $N$ to a point $x^- \in \A^-$.  Define
\[ \D = \bigcup_{x^+ \in \A^+} I_{x^+}.\]
Let $J$ be the knot associated to the diagram $D$.  Then $\D$ is a ribbon disk with $\pd \D = J \# \overline{J}$, and $\D$ contains a ribbon intersection corresponding to each crossing of $\A$, for a total of $c(\A) = c(D)$ ribbon intersections.  However, the $\ell$ ribbon intersections corresponding to consecutive under-crossings can be removed via an isotopy of $\D$, resulting in a new ribbon disk $\D'$ for $J \# \overline{J}$ such that $r(\D') = c(D) - \ell$.

Finally, for $1 \leq i \leq k$, let $a_i^{\pm}$ denote the arcs corresponding to $a_i$ in $\A^{\pm}$, and for $1 \leq i \leq k$, define
\[ R_i = \bigcup_{x^+ \in a_i^+} I_{x^+}.\]
Then $R_i$ is an embedded rectangle in $\D'$, and we can replace $R_i$ with a half-twisted rectangle $\widetilde{R_i}$ corresponding to the crossing in $D^*$ that was smoothed in the construction of the partial diagram $D$.  Replacing each rectangle $R_i$ with $\widetilde{R_i}$ yields a ribbon disk $\D^*$ such that $r(\D^*) = r(\D')$ and $\pd \D^* = K$, completing the proof.
\end{proof}

\begin{remark}
For a diagram $D$, one can define the \emph{maximal bridge length} $\ell(D)$ to be the maximum consecutive number of under-crossings (or over-crossings) contained in $D$.  Lemma~\ref{lem:underpass} then implies that if $D$ is a diagram for a knot $J$, we have
\[ r(J \# \overline{J}) \leq c(D) - \ell(D).\]
The quantity $c(D) - \ell(D)$ appears elsewhere as an upper bound for degrees of various polynomials.  See, for instance,~\cite{Kidwell},~\cite{KidStoim},~\cite{stoim2}, and~\cite{Thistlethwaite}.
\end{remark}

An example of carrying out this construction for the symmetric union presentation of $6_1$ from Figure~\ref{fig:symm} is shown in Figure~\ref{fig:underpass}. 

\begin{figure}[h!]
  \centering
  \includegraphics[width=.8\linewidth]{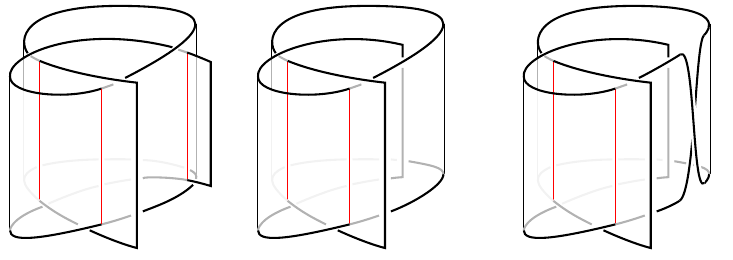}
  \caption{At left, construction of the ribbon disk $\D$ from the diagram $\A$ and removing a ribbon intersection via isotopy to get $\D'$. At right, a ribbon disk for $6_1$ obtained from the symmetric union presentation $D^*$ shown in Figure~\ref{fig:symm}.}
\label{fig:underpass}
\end{figure}

Now, we turn our attention to the more difficult task of finding lower bounds for ribbon numbers.  The first such bound involves the genus $g(K)$ and was observed in~\cite{mizuma}.

\begin{lemma}\label{lem:genus}
Let $K$ be a ribbon knot.  Then
\[ g(K) \leq r(K).\]
\end{lemma}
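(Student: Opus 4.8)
The plan is to bound the Seifert genus $g(K)$ by constructing, from a ribbon disk $\D$ realizing $r(K) = r$, a Seifert surface for $K$ of genus at most $r$. The standard way to do this is via a disk-band (fusion) presentation: a ribbon disk $\D$ with $r$ ribbon intersections can be described as a single disk with some number of bands attached, where the ribbon intersections correspond to bands passing through the disk. After pushing into $D^4$, the complexity of $\D$ is governed by the ribbon intersections, and one extracts a Seifert surface for $\pd \D = K$ whose genus is controlled by $r(\D)$.

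First I would fix a ribbon disk $\D$ for $K$ with $r(\D) = r(K) = r$, and recall the correspondence between immersed ribbon disks in $S^3$ and disk-band (fusion) presentations, as will be discussed in Section~\ref{sec:prelim}: $\D$ deformation retracts onto a spine, and resolving the $r$ ribbon singularities exhibits $\D$ as a $0$-handle with $1$-handles (bands) attached, with the number of bands bounded in terms of $r$. Next I would run the usual construction that turns a ribbon move / fusion presentation into a Seifert surface: performing the $r$ ribbon fissions splits $K$ into an unlink, which bounds disjoint disks; reconnecting along the $r$ bands (one for each ribbon intersection) attaches $r$ bands to this collection of disks. The resulting surface $F$ has $\pd F = K$, and its Euler characteristic is $\chi(F) = (\text{number of disks}) - r$, where the number of disks is $r+1$ (each ribbon fission increases the component count by one, starting from the single component $K$). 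Hence $\chi(F) = 1$, so $F$ is a once-punctured orientable surface — wait, orientability needs care; instead one obtains $\chi(F) \geq 1 - r$ after being careful with the band count, giving $g(F) \leq r$. Then $g(K) \leq g(F) \leq r = r(K)$.

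The main obstacle is making precise the passage from "ribbon disk with $r$ ribbon intersections" to "Seifert surface of genus $\leq r$," specifically controlling the band count and ensuring orientability of the surface produced. A ribbon intersection is a transverse arc of intersection between two sheets of the disk, and each such arc can be traded for a band in a fusion presentation; the subtlety is that the minimal fusion number $\F(K)$ need not equal $r(K)$, so one cannot simply invoke a fusion presentation with $r$ bands. The honest route is: each of the $r$ ribbon intersections contributes a $1$-handle when we surger $\D$ along the cocores of bands, and a spine-counting argument shows the resulting Seifert surface has first Betti number at most $2r$, hence genus at most $r$; alternatively, one cites the observation of Mizuma~\cite{mizuma} that this inequality holds, filling in the band-and-disk bookkeeping. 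I expect the cleanest writeup to push $\D$ slightly into $D^4$, take the double or a suitable $3$-dimensional cross-section, and read off the Seifert surface directly, with the genus bound $g(K) \leq r(K)$ following from counting that each ribbon intersection raises genus by at most one.
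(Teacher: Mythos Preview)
Your proposal never lands a complete argument. The fission/fusion computation you attempt reproduces the immersed ribbon disk itself (that is why you get $\chi(F)=1$), not an embedded Seifert surface; backing off to ``$\chi(F)\geq 1-r$ after being careful with the band count'' is an assertion, not a proof, and the subsequent remarks about spines, surgering along cocores, and pushing into $D^4$ remain sketches. The obstacle you flag---converting $r$ ribbon singularities into a Seifert surface of controlled genus while checking orientability---is genuine, and you do not actually resolve it; in particular, you never construct an embedded surface.

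The paper's proof is a one-step local construction that avoids all of this bookkeeping. Take a ribbon disk $\D$ for $K$ realizing $r(K)=r$. At each ribbon singularity, one sheet of $\D$ passes transversely through another along an arc; replace this crossing by the local \emph{smoothing} shown in Figure~\ref{fig:smooth}, which cuts the passing band along the arc and reattaches its two edges to the two sides of a slit in the other sheet. Each such smoothing removes one ribbon intersection and adds exactly one handle. Performing it at all $r$ singularities yields an embedded, orientable surface $F$ with $\partial F = K$ and $g(F)=r$, whence $g(K)\leq r$. No disk-band presentations, no fission/fusion count, no Euler-characteristic chase---just a local picture applied $r$ times.
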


\begin{proof}
Suppose $K$ bounds a ribbon disk $\D$ with $r$ ribbon intersections. Each ribbon intersection can be smoothed as in Figure~\ref{fig:smooth} to produce an embedded, orientable surface $F$ such that $g(F) = r$ and $\pd F = \pd \D = K$.
\end{proof}

\begin{figure}[h!]
  \centering
  \includegraphics[width=.6\linewidth]{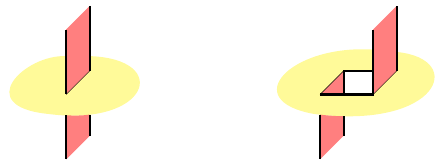}
  \caption{At left, a ribbon intersection in $\D$.  At right, the smoothing of the intersection to obtain $F$.}
  \label{fig:smooth}
\end{figure}

A similar bound can be obtained using the unknotting number $u(K)$.

\begin{lemma}
Let $K$ be a ribbon knot.  Then
\[ \frac{u(K)}{2} \leq r(K).\]
\end{lemma}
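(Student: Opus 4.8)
The plan is to mimic the proof of Lemma~\ref{lem:genus}, replacing the genus bound with an unknotting bound obtained from a different manipulation of the ribbon intersections. Suppose $K$ bounds a ribbon disk $\D$ with $r = r(\D)$ ribbon intersections. Instead of smoothing each ribbon intersection to raise genus, I would modify $K$ near each ribbon intersection so as to turn $\D$ into a \emph{genuinely embedded} disk, i.e., an honest unknotting disk. Concretely, at each ribbon intersection the ``thin'' band passes through the ``fat'' part of $\D$; near such an intersection, a small arc of the band lies on one side of the sheet of $\D$ it pierces. If we perform a crossing change (equivalently, push that arc across $\D$ through a small isotopy of $S^3$, which costs one crossing change on $K$) at each ribbon intersection, we remove that intersection. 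After doing this at all $r$ ribbon intersections, the resulting immersed disk becomes embedded, and its boundary is a knot $K'$ bounding an embedded disk in $S^3$, hence $K'$ is the unknot. Since we changed $K$ into the unknot using $r$ crossing changes, $u(K) \leq r = r(\D)$, and minimizing over $\D$ gives $u(K) \leq r(K)$, which is in fact stronger than the claimed bound.

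Given that the claimed inequality is only $u(K)/2 \leq r(K)$, either the intended argument is less efficient or there is a subtlety in realizing each ribbon intersection's removal by a single crossing change on $K$ rather than two. The likely intended route: near a ribbon intersection, the piercing band meets $\D$ in an arc whose two endpoints lie on $\pd \D = K$; sliding this band off $\D$ across a minimal-complexity arc may require passing $K$ through itself \emph{twice} (once for each ``end'' of the local picture), giving $2$ crossing changes per ribbon intersection, hence $u(K) \leq 2r(K)$, i.e.\ $u(K)/2 \leq r(K)$. So I would present the proof as: resolve each ribbon intersection by at most two crossing changes on $K$, turning $\D$ into an embedded disk and $K$ into the unknot; conclude $u(K) \leq 2r$.

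The steps, in order, are: (1) fix a ribbon disk $\D$ realizing $r = r(\D)$; (2) set up the local model of a single ribbon intersection, identifying the short arc of band on one side of the pierced sheet and the two points where the relevant subarc of the band meets $K$; (3) show that $\leq 2$ crossing changes on $K$, supported near that local model, suffice to push the band off $\D$ and eliminate the intersection, while leaving the other ribbon intersections unaffected (this requires choosing the crossing-change arcs to stay in disjoint neighborhoods of the $r$ intersections); (4) observe that after all $\leq 2r$ crossing changes, $\D$ has become an embedded disk, so its boundary is the unknot; (5) conclude $u(K) \leq 2r(\D)$ and minimize over $\D$. I would accompany this with a figure analogous to Figure~\ref{fig:smooth}, showing the local ribbon intersection and the crossing change(s) that remove it.

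The main obstacle is step (3): making precise that the crossing changes can be localized near the ribbon intersections so that they do not create new intersections elsewhere and so that the count is genuinely $\leq 2$ per intersection, not more. One must be careful that pushing the piercing band off $\D$ does not drag it through another sheet of $\D$; because ribbon intersections are ``one-way'' (a thin band through a fat sheet, with the band otherwise embedded and in particular not self-intersecting), a sufficiently small pushoff can be confined to a neighborhood of the intersection disjoint from the rest of $\D$, so this should go through, but it is the point that needs care. A secondary subtlety is bookkeeping the boundary: each such local move changes $K$, so one should verify that the ``unknot'' obtained at the end is reached from the original $K$ by exactly the recorded sequence of crossing changes (no extra moves hidden in the isotopies), which is where a clean local model pays off.
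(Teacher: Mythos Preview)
Your final outline (steps (1)--(5)) is correct and is exactly the paper's argument: choose a diagram exhibiting each ribbon intersection locally, perform two crossing changes at each to free the band from the pierced sheet, and conclude $u(K)\le 2r$.

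One clarification on your opening digression: the ``stronger'' bound $u(K)\le r(K)$ does not follow from the local move you describe. A crossing change passes one arc of $K$ through another arc of $K$, not through the interior of $\D$. At a ribbon intersection the piercing band has \emph{two} boundary arcs lying in $K$, and to slide the band off one side of the pierced sheet both of those arcs must pass through the same boundary arc of the sheet; that genuinely costs two crossing changes, which is precisely what the paper's Figure~\ref{fig:change} depicts. So the factor of $2$ is not slack in the argument but reflects the local model.
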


\begin{proof}
Suppose $K$ bounds a ribbon disk $\D$ with $r$ ribbon intersections.  Then there exists a diagram $D$ for $K$ with the property that each ribbon intersection can be removed with two crossing changes, as shown in Figure~\ref{fig:change}. The resulting knot $K'$ bounds an embedded disk~$\D'$, and it follows that $u(K) \leq 2r$.
\end{proof}

\begin{figure}[h!]
  \centering
  \includegraphics[width=.6\linewidth]{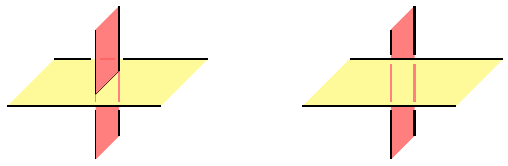}
  \caption{At left, a ribbon intersection in $\D$.  At right, two crossing changes remove the ribbon intersection.}
\label{fig:change}
\end{figure}


A \emph{disk-band presentation} for a ribbon disk $\D$ consists of a pair $(D,B)$, where $D$ is a collection of $n$ pairwise disjoint disks in $S^3$, and $B$ is a collection of embedded pairwise disjoint rectangles, the {\em bands}, such that each rectangle meets the interior of each disk transversely in a collection of arcs, meets $\pd (\bigcup D)$ in a pair of boundary arcs, and such that $\D = (\bigcup D) \cup (\bigcup B)$.  In this case, we note that $B$ contains $n-1$ bands.  Every ribbon disk $\D$ has a disk-band presentation, and the \emph{fusion number} $\F(K)$ is defined to be the minimum number of bands in a disk-band presentation for a ribbon disk $\D$ bounded by $K$.

\begin{lemma}\label{lem:fusion}
Let $K$ be a ribbon knot.  Then
\[ \F(K) \leq r(K)-1.\]
\end{lemma}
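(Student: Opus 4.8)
The plan is to manufacture a disk-band presentation directly from a minimal ribbon disk and then delete one band. Let $\D$ be a ribbon disk for $K$ with $r(\D)=r(K)=:r$, realized as the image of an immersion $f\colon \Delta\to S^3$ of a disk $\Delta$ whose only singularities are the ribbon arcs $A_1,\dots,A_r$; for each $i$, $f^{-1}(A_i)=a_i\sqcup b_i$ with $b_i$ properly embedded in $\Delta$ and $a_i$ contained in the interior of $\Delta$, and the $b_i$ are pairwise disjoint. Cutting $\Delta$ along $b_1,\dots,b_r$ yields $r+1$ disks $P_0,\dots,P_r$, and one checks that each $a_i$ lies in the interior of a disk $P_{\phi(i)}$ which is \emph{not} adjacent to $b_i$ (otherwise $A_i=f(a_i)$ would lie simultaneously in the interior and on the boundary of the embedded disk $f(P_{\phi(i)})$). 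Thickening each $b_i$ to a band and shrinking the $P_j$ off these bands exhibits $\D$ as a disk-band presentation with $r+1$ pairwise disjoint embedded disks and $r$ disjoint bands, in which the band through $b_i$ passes exactly once through the disk containing $a_i$ and through no disk to which it is attached. In particular $\F(K)\le r$.

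To sharpen this I would cut the disk count down by one. Since no band passes through one of its own endpoint disks, the relevant reduction move is: if the dual tree of the chord system $\{b_i\}$ has a degree-two vertex whose region contains none of the arcs $a_i$, then the two bands meeting that region may be amalgamated with it into a single band, decreasing the number of bands (and of disks) by one while decreasing $r$ by one. The crux — and the main obstacle — is to guarantee that such a move is always available, if necessary only after a preliminary sequence of band slides, which alter the dual tree without changing $K$: one must show that for a knotted $K$ the presentation obtained by cutting can always be brought to a form admitting a degree-two empty region. I expect this to follow from a combinatorial analysis of the dual tree together with the fact that if no reduction were possible the bands could be slid entirely off all the disks, forcing $\D$ to be isotopic to an embedded disk and hence $K$ to be the unknot.

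Granting the reduction, the rest is routine: one absorbs the empty degree-two region into a band joining its two neighbours, obtains a disk-band presentation with $r$ disks and $r-1$ bands, and concludes $\F(K)\le r(K)-1$. (Strictly, this uses that $K$ is nontrivial, i.e.\ $r(K)\ge 1$; the unknot must be excluded from the statement, as it has $\F=0=r$.) Everything except the existence of a reducible configuration — the cutting construction, the verification that the $f(P_j)$ are disjointly embedded, and the amalgamation — is standard, so essentially all of the difficulty is concentrated in showing that one band can always be eliminated.
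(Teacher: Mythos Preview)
Your first paragraph is correct and useful: cutting along the properly embedded arcs $b_i$ does give a disk-band presentation with $r+1$ disks and $r$ bands, hence $\F(K)\le r(K)$. The trouble is entirely in the second step, and you correctly flag it as the crux --- but you have not actually carried it out. The pigeonhole argument guarantees that at least one region $P_j$ contains no $a_i$, but nothing forces that region to have valence one or two in the dual tree. For instance, with $r=3$ one can have a star-shaped dual tree whose centre $P_0$ is the unique empty region and has valence three; neither of your reductions applies. Your fallback --- that band slides must eventually produce a reducible configuration, else $K$ would be unknotted --- is only a heuristic, and you give no mechanism for it. (A small side error: amalgamating two bands across an empty valence-two region does \emph{not} decrease $r$; the merged band carries both ribbon intersections. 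This is harmless for the fusion-number bound but shows the picture is not quite right.)

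The paper avoids the reduction step entirely by reversing the roles of the two arc families. Rather than cutting along the properly embedded arcs $b_i$ to obtain $r+1$ pieces, it takes small disk neighbourhoods $D_i$ of the \emph{interior} arcs $a_i$; there are exactly $r$ of these, they are pairwise disjoint and embedded, and removing them from $\D$ kills all the ribbon singularities, leaving an embedded planar surface $P$ with $K$ as one boundary component and the unlink $\bigcup \partial D_i$ as the others. One then chooses any $r-1$ arcs in $P$ connecting the $\partial D_i$ and thickens them to bands $B$; since $P$ is planar, $P\setminus\bigcup B$ is an annulus, so its inner boundary --- which is $\partial\bigl((\bigcup D_i)\cup(\bigcup B)\bigr)$ --- is isotopic to $K$. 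This gives a disk-band presentation with $r-1$ bands directly, with no case analysis or slides needed. Note that the disk $\D'$ so produced need not coincide with $\D$; only its boundary matters.
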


\begin{proof}
Let $\D$ be a ribbon disk for $K$ with $r$ ribbon intersections.  Each ribbon intersection of $\D$ gives rise to two arcs in $\D$, one of which, call it $a'_i$ has its endpoints on $\pd \D$, and the other of which, call it $a_i$, has its endpoints in $\text{int}(\D)$.  For each arc $a_i$, let $D_i$ be a small closed disk neighborhood of $a_i$ in $\text{int}(\D)$, and let $D$ be the collection of the disks $D_i$.  In addition, let $U = \bigcup \pd D_i$, an unlink.  Then the link $K \cup U$ bounds the embedded planar surface $P = \D \setminus (\bigcup D)$.  Finally, there is a collection of $r-1$ arcs in $P$ connecting the $r$ components of $\pd D_i$, and thickening these arcs yields a collection $B$ of $r-1$ bands in $P$ connecting components of $D$.  Since $P$ is planar, it follows that $P \setminus (\bigcup B)$ is an embedded annulus with $K$ as one of its boundary components, and so $K$ is also isotopic in $S^3$ to the other boundary component of $P \setminus (\bigcup B)$, as shown in Figure~\ref{fig:abstract}.  We conclude that $(D,B)$ is a disk-band presentation for a ribbon disk $\D'$ such that $\pd \D' = K$, completing the proof.
\end{proof}

\begin{figure}[h!]
  \centering
  \includegraphics[width=.4\linewidth]{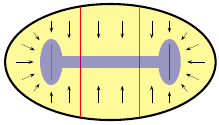}
  \caption{An example of the annulus $P \setminus (\bigcup B)$ from the proof of Lemma~\ref{lem:fusion}.}
\label{fig:abstract}
\end{figure}

\begin{remark}\label{rmk:unknot}
Lemma~\ref{lem:fusion} gives a quick argument that no nontrivial knot has ribbon number one, since $r(K) < 2$ implies $\F(K) < 1$, and $\F(K) = 0$ if and only if $K$ is the unknot.  In addition, the inequality in Lemma~\ref{lem:fusion} is equality for a nontrivial knot $K$ with $r(K) = 2$.
\end{remark}

\begin{remark}
The ribbon disk $\D'$ constructed in Lemma~\ref{lem:fusion} need not be identical to the ribbon disk $\D$ used as input.  A disk-band presentation for a ribbon disk $\D$ with $\F(\D) = 3$ and $r(\D) = 3$ appears in Figure~\ref{fig:fusion}, where $\pd \D$ is the knot $9_{41}$.  Carrying out the process in Lemma~\ref{lem:fusion} yields a new ribbon disk $\D'$, where $\F(\D') = 2$ but $r(\D') = 4$.  See Figure~\ref{fig:fusion}.
\end{remark}

\begin{remark}
The higher-dimensional analogue of this example is discussed in detail in~\cite{YasudaBase}; in the context of 2-knots, ribbon number is replaced with ``crossing number" and fusion number is related to ``base index."  See Section~\ref{sec:doubles} for further details on ribbon 2-knots.
\end{remark}

\begin{figure}[h!]
  \centering
  \includegraphics[width=.5\linewidth]{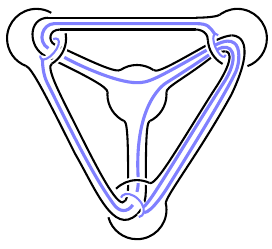}
  \caption{A disk-band presentation for a ribbon disk $\D$ for the knot $9_{41}$ with $\F(\D) = r(\D) = 3$.  Carrying out the procedure described in \ref{lem:fusion} yields another ribbon disk $\D'$ with $\F(\D') = 2$ but $r(\D') = 4$.}
\label{fig:fusion}
\end{figure}

To conclude this section, we can use these lemmas to determine exact values for $r(T_{p,q} \# \overline{T_{p,q}})$.  This is a particularly nice class of ribbon knots; for instance, $\F(T_{p,q} \# \overline{T_{p,q}}) = \min\{p,q\} - 1$ \cite{JMZ}, and their fibered ribbon disks have been studied in great detail (see ~\cite{FHRkS,qdisks}).

\begin{proposition}\label{prop:gensq}
Let $T_{p,q}$ denote the $(p,q)$-torus knot, with $p,q>0$.  Then
\[ r(T_{p,q} \# \overline{T_{p,q}}) = (p-1)(q-1).\]
\end{proposition}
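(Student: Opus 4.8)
The plan is to establish the equality by sandwiching $r(T_{p,q}\#\overline{T_{p,q}})$ between a genus-based lower bound and an explicit upper bound that agree. We may as well assume $\gcd(p,q)=1$, so that $T_{p,q}$ is genuinely a knot, and, after swapping $p$ and $q$ if necessary, that $2\le p\le q$; the remaining case $\min\{p,q\}=1$ is trivial, since then $T_{p,q}$ is the unknot and both sides of the claimed equality vanish.

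For the lower bound I would invoke Lemma~\ref{lem:genus} together with the classical value $g(T_{p,q}) = (p-1)(q-1)/2$. One way to recall the latter: Seifert's algorithm applied to the standard closed-braid diagram of $T_{p,q}$, namely the closure of $\beta=(\sigma_1\sigma_2\cdots\sigma_{p-1})^q$, produces a surface built from $p$ disks and $q(p-1)$ bands, hence with Euler characteristic $p-q(p-1)=1-(p-1)(q-1)$, and this surface is of minimal genus. Since Seifert genus is additive under connected sum and unchanged by mirroring, $g(T_{p,q}\#\overline{T_{p,q}}) = 2g(T_{p,q}) = (p-1)(q-1)$, and applying Lemma~\ref{lem:genus} to the ribbon knot $T_{p,q}\#\overline{T_{p,q}}$ gives $r(T_{p,q}\#\overline{T_{p,q}})\ge(p-1)(q-1)$.

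For the upper bound I would take $D$ to be exactly that closed-braid diagram, so that $c(D)=q(p-1)$, and appeal to the remark following Lemma~\ref{lem:underpass}, which asserts $r(J\#\overline{J})\le c(D)-\ell(D)$ for any knot diagram $D$ of $J$, where $\ell(D)$ is the maximal bridge length of $D$. The key observation is that within a single syllable $\sigma_1\sigma_2\cdots\sigma_{p-1}$ of $\beta$, the strand entering at the first position passes over (or under, depending on the crossing convention) each of the other $p-1$ strands once, in immediate succession, so $D$ contains $p-1$ consecutive crossings of a single type and therefore $\ell(D)\ge p-1$. Hence
\[ r(T_{p,q}\#\overline{T_{p,q}})\ \le\ c(D)-\ell(D)\ \le\ q(p-1)-(p-1)\ =\ (p-1)(q-1),\]
which matches the lower bound and completes the argument.

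I do not expect a serious obstacle; the argument is essentially the remark that the lower bound of Lemma~\ref{lem:genus} and the upper bound of Lemma~\ref{lem:underpass} coincide on this family. The two points requiring a sentence of care are: (i) confirming that the standard torus-braid diagram really does contain a bridge of length at least $p-1$ — a quick check that is insensitive to crossing-sign conventions since $\ell(D)$ permits runs of over-crossings or of under-crossings; and (ii) verifying that the remark after Lemma~\ref{lem:underpass} applies here, which only asks that in forming $D\#\overline{D}$ one positions the connect-sum band so that a horizontal strand of the resulting partial diagram abuts this bridge, which is always possible.
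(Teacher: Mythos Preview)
Your proposal is correct and follows essentially the same approach as the paper: bound below via Lemma~\ref{lem:genus} and the classical genus of torus knots, and bound above via Lemma~\ref{lem:underpass} applied to the standard $q(p-1)$-crossing diagram with a bridge of length $p-1$. Your write-up is slightly more detailed (handling the degenerate case $\min\{p,q\}=1$ and spelling out the braid word), but the argument is the same.
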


\begin{proof}
The equation above is symmetric in $p$ and $q$, so suppose without loss of generality that $p<q$.  It is well-known that $g(T_{p,q}) = \frac{1}{2}(p-1)(q-1)$ and genus is additive under connected sum, so that $g(T_{p,q} \# \overline{T_{p,q}}) = (p-1)(q-1)$.  It follows from Lemma~\ref{lem:genus} that
\[ r(T_{p,q} \# \overline{T_{p,q}}) \geq (p-1)(q-1).\]
On the other hand, the standard diagram $D$ for $T_{p,q}$ has $c(D) = q(p-1)$ and $\ell(D) = p-1$.  Thus, by Lemma~\ref{lem:underpass},
\[ r(T_{p,q} \# \overline{T_{p,q}}) \leq q(p-1) - (p-1) = (p-1)(q-1),\]
completing the proof.
\end{proof}

\begin{remark}
The statement in Proposition~\ref{prop:gensq} can be compared with Theorem 1.2 from~\cite{YasudaEval}, which gives the ribbon crossing number of a spun torus knot (see Section~\ref{sec:doubles} for definitions).
\end{remark}

\section{Doubles of ribbon disks}~\label{sec:doubles}

The notion of a ribbon knot extends to higher dimensions.  A 2-knot $\K \subset S^4$ is a \emph{ribbon 2-knot} if $\K$ bounds an immersed $D^3 \subset S^4$ with only ribbon intersections.  In this case, the ribbon $D^3$ has a higher-dimensional disk-band presentation consisting of $n$ 3-dimensional 0-handles and $n-1$ 3-dimensional 1-handles that can intersect the 0-handles in some number of 2-disks.  A more detailed and technical description of ribbon 2-knots appears in Section 2.2 of~\cite{CKS}, in which each 0-handle is called a \emph{base} and each 1-handle is called a \emph{band}.

Analogous to the classical case, an intersection of a base and band, which must be a 2-disk by definition, is called a \emph{ribbon intersection}.  The \emph{ribbon crossing number} $\text{r-cr}(\K)$ of a ribbon 2-knot is defined to be the minimum number of ribbon intersections contained in an immersed $D^3$ as described above.  If $\D \subset S^3$ is a ribbon disk for a classical knot $K$ with $r$ ribbon intersections, then as noted in the introduction, we can perturb $\D$ near its ribbon intersections to construct an embedded disk $D^4$, which we will call an \emph{embedded ribbon disk}, and which we will also denote $\D$ in an abuse of notation.  The union of two copies of $\D \subset D^4$ glued along the identity map is a ribbon 2-knot $\K(\D)$ in $S^4$, called the \emph{double of $\D$}, and $\K(\D)$ bounds an immersed $D^3 \subset S^4$ with the same number of ribbon singularities as the immersed disk $\D$.  See, for instance, Figure 2.5 and the surrounding discussion of~\cite{CKS} for further details.  In this case, $K$ is called the \emph{equatorial ribbon knot} of $\K(\D)$, and it is also known that every ribbon 2-knot can arise from such a construction.  In addition, we have the following lemma, a folk theorem that we have not seen elsewhere in print.

\begin{lemma}\label{lem:double}
If $K$ is a ribbon knot bounding a ribbon disk $\D$, then $\Delta_K(t) = f(t) \cdot f(t^{-1})$, where $f(t)=\Delta_{\K(\D)}(t)$ is the Alexander polynomial of the double $\K(\D)$, which is also equal to the Alexander polynomial $\Delta_{\D}(t)$ of the embedded ribbon disk $\D$.
\end{lemma}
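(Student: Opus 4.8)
The plan is to compute the Alexander polynomial of the exterior of the embedded ribbon disk $\D \subset D^4$ via a handle decomposition coming from a disk-band presentation, and then relate this both to the double $\K(\D)$ and to $\Delta_K(t)$ by a Mayer–Vietoris argument. First I would fix a disk-band presentation $(D,B)$ for $\D$ with $n$ disks and $n-1$ bands. Pushing the disks into $D^4$ and thickening, the exterior $X_\D = D^4 \setminus \nu(\D)$ admits a handle decomposition whose fundamental group has a Wirtinger-style presentation with $n$ generators (one meridian per disk) and $n-1$ relations (one per band, recording how the band passes through the disks); abelianizing to $\Z = H_1(X_\D)$ and forming the Fox Jacobian of this presentation gives a presentation matrix for the Alexander module of $\D$ over $\Z[t^{\pm 1}]$. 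Since there is one more generator than relations, the first elementary ideal is principal, and its generator (up to units) is by definition $\Delta_\D(t)$; I would observe that $f(t) := \Delta_\D(t)$ is, after the standard normalization, an honest polynomial in $\Z[t]$ because the presentation matrix entries are polynomials.

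Next I would identify $\Delta_\D(t)$ with $\Delta_{\K(\D)}(t)$. The double $\K(\D)$ is built by gluing two copies of $(D^4, \D)$ along their common boundary $(S^3, K)$; correspondingly $S^4 = D^4 \cup_{S^3} D^4$ and $S^4 \setminus \nu(\K(\D)) = X_\D \cup_{S^3 \setminus \nu(K)} X_\D$. But $K$ bounds the embedded disk $\D$ in each $D^4$, so $S^3 \setminus \nu(K)$ has the rational homology of a circle and, crucially, the inclusion $S^3 \setminus \nu(K) \hookrightarrow X_\D$ is surjective on $\pi_1$ and an isomorphism on $H_1$; a van Kampen / Mayer–Vietoris computation with $\Z[t^{\pm1}]$ coefficients then shows that $H_1$ of the infinite cyclic cover of the double is presented by the "same" matrix, giving $\Delta_{\K(\D)}(t) \doteq \Delta_\D(t)$. (This is the 2-knot analogue of the standard fact that a Seifert-type matrix for a slice disk computes the Alexander polynomial of its double.)

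Finally, for $\Delta_K(t) = f(t)f(t^{-1})$ I would again use $X_K := S^3 \setminus \nu(K) = \partial X_\D$ sits inside $X_\D$, and run a Mayer–Vietoris sequence for the decomposition of the infinite cyclic cover of $X_\D$ induced by a collar $X_K \times [0,1]$, or equivalently invoke the duality pairing: Milnor duality (Poincaré–Lefschetz duality for the $\Z[t^{\pm1}]$-cover of the pair $(X_\D, X_K)$) identifies the torsion of $H_*$ of the cover of $X_K$ with the torsion of $H_*$ of the cover of $X_\D$ together with its "conjugate," which at the level of orders yields $\Delta_K(t) \doteq \Delta_\D(t) \cdot \overline{\Delta_\D(t)} = f(t)f(t^{-1})$. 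The normalization $f(1) = \pm 1$ follows since $X_\D$ is a homology circle (it is a homotopy-theoretically controlled complement of a disk in $D^4$), so $\Delta_\D(1) = \pm 1$.

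The main obstacle is the middle step: making the Mayer–Vietoris / van Kampen bookkeeping for the double precise enough to conclude that the $\Z[t^{\pm1}]$-module presentation for $\K(\D)$ is literally the one coming from the disk-band presentation of $\D$ (and in particular that no extra torsion appears from the gluing region $S^3 \setminus \nu(K)$). The key input that makes this work is that $K$ bounds $\D$ in each half, so $H_1(S^3 \setminus \nu(K); \Z[t^{\pm1}]) \to H_1(X_\D; \Z[t^{\pm1}])$ has controlled kernel and cokernel; I would isolate this as the technical heart and handle it either by an explicit CW/handle model or by citing the analogous statement for ribbon 2-knots in~\cite{CKS}. Everything else is a routine Fox calculus computation that I would not grind through here.
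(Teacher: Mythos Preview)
Your outline is essentially the paper's own approach: the factorization $\Delta_K(t)\doteq \Delta_\D(t)\cdot\Delta_\D(t^{-1})$ comes from the long exact sequence of the pair $(X_\D,X_K)$ with $\Lambda$-coefficients together with Poincar\'e--Lefschetz duality (the paper cites \cite{FNOP} and Levine's lemma on orders in a short exact sequence), and the identification $\Delta_\D(t)\doteq\Delta_{\K(\D)}(t)$ comes from Mayer--Vietoris for $S^4\setminus\nu\K(\D)=X_\D\cup_{X_K}X_\D$, using that the inclusion-induced map $H_1(X_K;\Lambda)\to H_1(X_\D;\Lambda)$ is surjective because the ribbon disk exterior is built from $X_K$ with only $2$- and $3$-handles.

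Two small remarks. First, your opening Fox-calculus computation of $\Delta_\D$ from a disk-band presentation is not needed for the lemma as stated and the paper does not do it; everything proceeds at the level of orders of $\Lambda$-modules. Second, you have the relative difficulty backwards: once surjectivity of $H_1(X_K;\Lambda)\to H_1(X_\D;\Lambda)$ is in hand, the Mayer--Vietoris step for the double is immediate (the two inclusion maps are the \emph{same} epimorphism, so the cokernel of the diagonal is a single copy of $H_1(X_\D;\Lambda)$), whereas the factorization step is where the real work---duality, UCT, and the multiplicativity of orders---lives.
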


\begin{proof}
The statement that $\Delta_K(t)=f(t)\cdot f(t^{-1})$ with $f(t)$ the Alexander polynomial of $\D$ is the content of~\cite[Corollary~15.11]{FNOP} and is explained in detail in Chapter~15 of that same reference.

First, we briefly sketch this argument and then explain why $f(t)$ is in fact also the Alexander polynomial of the double $\K(\D)$.  Let $\Lambda := \Z[t^{\pm 1}]$. We denote the order of a finitely generated $\Lambda$-module $M$ by $\ord(M)\in \Lambda$ (see~\cite[p.50]{Hillman}).
By definition we have
\[\begin{array}{rcl} \Delta_K(t)&=&\ord(H_1(S^3\setminus\nu  K;\Lambda)), \\ \Delta_{\K(\D)}(t)&=&\ord(H_1(S^4\setminus\nu \K(\D);\Lambda)), \\
\Delta_{\D}(t)&=&\ord(H_1(D^4\setminus\nu  \D;\Lambda)).\end{array}\] 
Next we consider the following excerpt of the exact sequence of the pair 
\[  H_2(D^4\setminus \nu \D,S^3\setminus \nu  K;\Lambda)\xrightarrow{\partial_2} H_1(S^3\setminus \nu K;\Lambda)\xrightarrow{i_*} H_1(D^4\setminus \nu \D;\Lambda). \]
Since $\D$ is an embedded ribbon disk, the exterior of $\D$ has a description by $4$-dimensional handle attachments to the exterior of $K$ that only uses $2$- and $3$-handles. Therefore the map $i_*$ 
is an epimorphism. Using Poincar\'e duality, the Universal Coefficient Theorem and the fact that the exterior of $K$ and the boundary of the exterior of $\D$ have isomorphic Alexander module structures, one sees that the left hand map is a monomorphism
and that $\operatorname{ord}(H_2(D^4\setminus \nu \D,S^3\setminus\nu  K;\Lambda))= 
\overline{\ord(H_1(D^4\setminus \nu\D;\Lambda))}=\Delta_{\D}(t^{-1})$. 
Next we recall the following purely algebraic result:  \cite[Lemma~5]{Levine} say that given a short exact sequence $0 \to A\to B\to C\to 0$ of finitely generated $\Lambda$-modules  the order of the middle module is the product of the orders of the outer modules.
It follows from this discussion  that $\Delta_{K}(t)=f(t)\cdot f(t^{-1})$
where $f(t)=\Delta_{\D}(t)$.

We now turn our attention to the main statement, namely that $f(t)$ is in fact equal to the Alexander polynomial of the double $\K(\D)$.  To this end, we express the $4$-sphere $S^4$ as a union of two $4$-balls, $S^4=D_1^4\cup D_2^4$, and we denote by $\D_1\subset D_1^4$ and $\D_2\subset D_2^4$ two copies of the embedded ribbon disk with $\K(\D)=\D_1\cup \D_2$.
This leads to the decomposition $S^4\setminus \nu \K(\D)=(D_1^4\setminus \nu \D_1)\cup_{S^3\setminus \nu K}(D_2^4\setminus \nu \D_2)$. We consider the corresponding Mayer--Vietoris sequence with $\Lambda$-coefficients:
\[ H_1(S^3\setminus \nu K;\Lambda)
\to H_1(D^4_1\setminus \nu \D_1;\Lambda)\oplus
H_1(D^4_2\setminus \nu \D_2;\Lambda)\to
H_1(S^4\setminus \nu \K(D);\Lambda)\to 0.\]
Since the inclusion induced maps 
$ H_1(S^3\setminus \nu K;\Lambda)
\to H_1(D^4_i\setminus \nu \D_i;\Lambda)$
are the same  as the inclusion induced epimorphism
$ H_1(S^3\setminus \nu K;\Lambda)
\to H_1(D^4\setminus \nu \D;\Lambda)$, 
we obtain from the above long exact sequence  that $H_1(D^4\setminus \nu \D;\Lambda)\cong H_1(S^4\setminus \nu \K(\D);\Lambda)$. 

We set $f(t):= \Delta_{\K(\D)}(t)=\ord(H_1(S^4\setminus \nu \K(\D);\Lambda))$.
It follows from the above discussion that 
$\Delta_K(t)=\Delta_{\D}(t)\cdot \Delta_{\D}(t^{-1})=f(t)\cdot f(t^{-1})$. 
\end{proof}

Yasuda proved the next theorem.

\begin{theorem}\label{thm:yasuda}\cite{YasudaAlex}
Suppose $\K$ is a ribbon 2-knot bounding a ribbon $D^3$ with $r$ ribbon intersections.  Then there exists a representative $f(t) = a_0 + a_1t + \dots + a_rt^r$ of the Alexander polynomial of $\K$ such that
\[ |a_i| \leq \binom{r}{i}\]
for $0 \leq i \leq r$.
\end{theorem}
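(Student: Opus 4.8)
The plan is to reduce the statement to an elementary determinant estimate by putting the Alexander module of $\K$ into a rigid normal form whose size is exactly $r$. A ribbon $2$-knot $\K$ bounding a ribbon $D^3$ with $n$ bases, $n-1$ bands, and $r$ ribbon intersections carries a Wirtinger-type presentation of $\pi_1(S^4\setminus\nu\K)$ having one relation for each of the $r$ ribbon intersections, each of the conjugation form $x_{\mathrm{out}}=\mu\,x_{\mathrm{in}}\,\mu^{-1}$; after Tietze moves absorbing the generators and relations that come purely from the tree of bases and bands, one is left with $r+1$ generators and exactly these $r$ relations. Deleting one column from the resulting $r\times(r+1)$ Fox Jacobian (abelianized, every meridian sent to $t$) yields an $r\times r$ matrix $J(t)$ presenting the Alexander module of $\K$, so that $\Delta_\K(t)\doteq\det J(t)$. (By Lemma~\ref{lem:double} one could equally run this on the associated ribbon disk in $D^4$ and the exterior's cellular chain complex — one $0$-cell, $r+1$ $1$-cells, $r$ $2$-cells — which is interchangeable.)

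The crux, and the step I expect to be hardest, is the structural claim that after permissible row and column operations over $\Lambda=\Z[t^{\pm1}]$ one has $J(t)=I_r-tN$ for an integer matrix $N$ that is totally unimodular; in particular every principal minor of $N$ then lies in $\{-1,0,1\}$. Two observations make this plausible. First, abelianizing a relation of shape $x_{\mathrm{out}}=\mu\,x_{\mathrm{in}}\,\mu^{-1}$ and applying Fox calculus produces a diagonal contribution equal to $1$ together with an off-diagonal contribution that is linear in $t$ — exactly the pattern $I_r-tN$. Second, once the tree relations are pushed through a spanning-tree reduction, the matrix $N$ should be an incidence/network matrix attached to the tree underlying the ribbon presentation — precisely the tree-with-edge-markings data that the rest of this paper encodes as a ribbon code — and network matrices are totally unimodular. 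Making this rigorous amounts to: arranging the presentation so that it genuinely carries only $r$ conjugation relations; checking that the Tietze and elementary matrix moves preserve the shape $I_r-tN$ rather than merely the size $r$; and describing $N$ explicitly enough to certify its total unimodularity.

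Granting the structural claim, the estimate is immediate. Since the cokernel of $I_r-tN$ is the Alexander module of $\K$,
\[ f(t):=\det(I_r-tN)=\sum_{i=0}^{r}(-1)^{i}e_i(N)\,t^{i} \]
is a representative of the Alexander polynomial of $\K$, where $e_i(N)$ denotes the sum of the $\binom{r}{i}$ principal $i\times i$ minors of $N$. Total unimodularity forces each such minor into $\{-1,0,1\}$, so $|e_i(N)|\le\binom{r}{i}$, and hence $f(t)=a_0+a_1t+\dots+a_rt^r$ with $a_i=(-1)^ie_i(N)$ satisfies $|a_i|\le\binom{r}{i}$ for $0\le i\le r$ (here $a_0=1$ for this particular representative). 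If extracting the clean form $I_r-tN$ proves stubborn, a fallback is to work directly with the cellular chain complex of the infinite cyclic cover of the exterior coming from the handle structure induced by the ribbon $D^3$, whose second boundary map is visibly of the form $I_r-tN$; but recognizing $N$ as totally unimodular remains the load-bearing point in either approach.
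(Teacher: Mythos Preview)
The paper does not prove Theorem~\ref{thm:yasuda}; it is quoted from Yasuda~\cite{YasudaAlex} and used as a black box in the proof of Theorem~\ref{thm:main1}. There is therefore no argument in the paper to compare your proposal against.

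On its own terms, your outline has a genuine gap exactly where you locate it, and the gap is larger than your plausibility remarks suggest. For a conjugation relation $x_j = x_k\, x_i\, x_k^{-1}$, the abelianized Fox row (after clearing the unit $-t$) is $(1,\,-t,\,t-1)$ in the columns $(x_j,x_i,x_k)$: the entry in the \emph{conjugator} column is $t-1$, not a monomial. Hence the raw $r\times(r+1)$ Jacobian is not of the shape $I_r-tN$ with integer $N$, and the unspecified ``permissible row and column operations over $\Lambda$'' that are supposed to produce this shape are doing real work that you have not supplied; it is not clear they can be chosen to keep $N$ integral, let alone totally unimodular. Even if one reaches $I_r-tN$, your identification of $N$ with a network matrix is only heuristic: the paths $\gamma_\ell$ in a ribbon code run from a vertex to a \emph{marking} in the interior of an edge, not vertex-to-vertex, so the incidence pattern is not the textbook network-matrix setup and total unimodularity needs an argument. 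For contrast, the paper's Proposition~\ref{prop:block} together with Lemma~\ref{lem:double} gives the concrete representative $f(t)\doteq\det(X-tY^T)=\det\bigl(tI_r+(1-t)X\bigr)$, which is manifestly \emph{not} of the form $\det(I_r-tN)$; so if your normal form exists it must come from a genuinely different presentation, and constructing it is the missing content of the proof.
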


\begin{proof}[Proof of Theorem~\ref{thm:main1}]
Suppose $K$ bounds a ribbon disk $\D$ with $r(K) = r$ ribbon intersections.  Then the double $\K(\D)$ is a ribbon 2-knot bounding a ribbon $D^3$ with $r$ ribbon intersections.  By Theorem~\ref{thm:yasuda}, there exists a representative $f(t)$ of the Alexander polynomial of $\K(\D)$ whose coefficients satisfy the stated inequality, and by Lemma~\ref{lem:double}, we have $\Delta_K(t) = f(t) \cdot f(t^{-1})$, completing the proof.
\end{proof}

\begin{proof}[Proof of Corollary~\ref{cor:main1}]
Suppose $K$ bounds a ribbon disk $\D$ with $r(K) = n$ ribbon intersections.  By Theorem~\ref{thm:main1}, we can express $\Delta_K(t)$ as $f(t) \cdot f(t^{-1})$, where $f(t) = a_0 + a_1t + \dots + a_rt^r$ and $|a_i| \leq \binom{r}{i}$ for all $i$.  Observe that $\det(K) = |\Delta_K(-1)| = |f(-1)|^2$.  Now, we compute
\[ |f(-1)| \leq |a_0| + |a_1| + \dots + |a_r| \leq \binom{r}{0} + \binom{r}{1} + \dots + \binom{r}{r} = 2^r.\]
However, since $\det(K)$ is odd, $|f(-1)|$ is also odd, and so $|f(-1)| \leq 2^r-1$, completing the proof.
\end{proof}

\section{Ribbon codes and Alexander polynomials}\label{sec:codes}

In this section, we develop new machinery to better understand the possible Alexander polynomials of ribbon knots.  To each ribbon disk, we will associate a tree with marked and labeled edges, called a ribbon code.  Formally, a \emph{ribbon code} is a tree $\Gamma$ with $n$ vertices $v_1,\dots,v_n$, such that the union of the interiors of the edges in $\Gamma$ contains a finite number of distinguished points $\mu_1,\dots,\mu_r$, called \emph{markings}, with each marking $\mu_{\ell}$ labeled with an integer in the set $\{\pm 1, \dots ,\pm n\}$.

Recall the definition of a disk-band presentation $(D,B)$ for a ribbon disk $\D$ from Section~\ref{sec:prelim}.  Here we will also assume that a disk-band presentation is oriented, so that each disk in $D$ and each band in $B$ has a positive normal direction that agrees with the positive normal direction for the ribbon disk $\D$.  Suppose $(D,B)$ is an oriented disk-band presentation, with the disks in $D$ labeled $D_1,\dots,D_n$ and the bands in $B$ labeled $B_1,\dots,B_{n-1}$.  Construct a graph $\Gamma$ from $(D,B)$ by associating a vertex $v_i$ to each disk $D_i$ and connecting two vertices $v_{i_1}$ and $v_{i_2}$ with an edge $e_j$ if the corresponding band $B_j$ has its two opposite boundary edges in the disks $D_{i_1}$ and $D_{i_2}$.  Note that the homotopy type of $\Gamma$ is the same as that of $\D$, and so $\Gamma$ is a tree.

As we follow the band $B_j$ from $D_{i_1}$ to $D_{i_2}$, we add markings to the edge $e_j$ corresponding to the ribbon intersections of $B_j$ with the disks in $D$.  If a marking $\mu_{\ell}$ corresponds to a ribbon intersection of $B_j$ with the disk $D_k$, we label the marking $\pm k$, with the sign decided as follows:  Each marking has a \emph{local direction}, an arrow which points toward the component of $\Gamma \setminus \mu_{\ell}$ containing the vertex $v_k$ associated with $D_k$, and this induces a local direction of the band $B_j$ at the ribbon intersection with $D_k$.  If the local direction on $B_j$ agrees with the positive normal orientation of $D_k$ at the ribbon intersection, $\mu_{\ell}$ is labeled $+k$.  Otherwise, the local direction of $B_j$ disagrees with the orientation of $D_k$, and $\mu_{\ell}$ is labeled $-k$.  If $(D,B)$ is a disk-band presentation for a disk $\D$ with $r$ ribbon intersections, and if $\Gamma$ is the corresponding ribbon code, $\Gamma$ contains a total of $r$ markings.

\subsection{Some guiding examples}

In Figure~\ref{fig:ex1}, we see disk-band presentations for the Stevedore knot $6_1$ (at left) and the square knot $3_1 \# \overline{3_1}$ at right, along with their corresponding ribbon codes.  We have included the local direction at the markings for reference, but the reader should note that this information is redundant, since the local directions are determined uniquely by the labelings.

\begin{figure}[h!]
\begin{subfigure}{.48\textwidth}
  \centering
  \includegraphics[width=.9\linewidth]{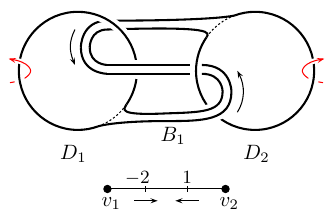}
\end{subfigure}
\begin{subfigure}{.48\textwidth}
  \centering
  \includegraphics[width=.9\linewidth]{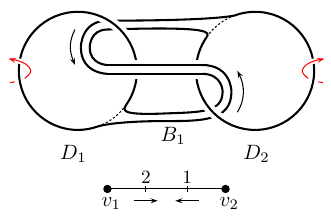}
\end{subfigure}
  \caption{Ribbon codes induced by disk-band presentations for $3_1 \# \overline{3_1}$ (left) and $6_1$ (right).}
\label{fig:ex1}
\end{figure}

In Figure~\ref{fig:ex2}, we see two disk-band presentations for two different knots that induce the same ribbon code.  In particular, the ribbon code is not affected by any homotopy of the bands supported outside of a small neighborhood of the interior of the disks $D$.  While such a homotopy does not change the combinatorics of the ribbon intersections, we are allowed to pass bands through each other, tie a local knot in a band, add full twists to a band, and change the cyclic ordering along which multiple bands attach to a given disk.

\begin{figure}[h!]
\begin{subfigure}{.48\textwidth}
  \centering
  \includegraphics[width=.9\linewidth]{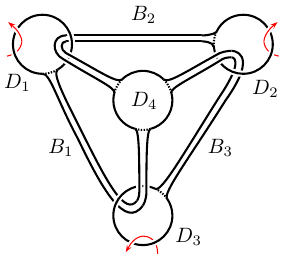}
  \label{fig:braid1}
\end{subfigure}
\begin{subfigure}{.48\textwidth}
  \centering
  \includegraphics[width=.9\linewidth]{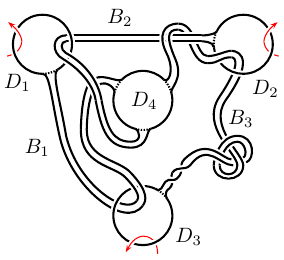}
\end{subfigure}
\begin{subfigure}{.48\textwidth}
  \centering
  \includegraphics[width=.6\linewidth]{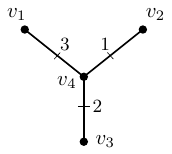}
\end{subfigure}
  \caption{Two disk-band presentations for different knots that induce the same ribbon code   
  }
\label{fig:ex2}
\end{figure}

\begin{remark}
Although adding or deleting a full twist from a band does not change the corresponding ribbon code, adding a half twist can alter the code significantly, due to the requirement that the disks and bands be consistently oriented.  If a band is modified by a half-twist, then the normal direction of one of the disks attached to the band must also be changed, along with the normal direction of the bands attached to that disk, and so on.
\end{remark}

We say that two ribbon codes $\Gamma$ and $\Gamma'$ are \emph{isomorphic} if there is a isomorphism $\varphi:\Gamma \rightarrow \Gamma'$ of the underlying graphs that induces an isomorphism of the markings in the following sense:  Let $\{v_1,\dots,v_n\}$ and $\{v_1',\dots,v_n'\}$ denote the vertex sets of $\Gamma$ and $\Gamma'$, respectively.  Then $\varphi$ induces a permutation $\sigma \in S_n$ by the rule $\sigma(i)$ satisfies $v'_{\sigma(i)} = \varphi(v_i)$.  We also require $\varphi$ induces a bijection between the markings of $\Gamma$ and $\Gamma'$, and if a marking $\mu_{\ell}$ is labeled $\pm i$, then the marking $\varphi(\mu_{\ell})$ is labelled $\pm \sigma(i)$.

The proof of the next lemma is straightforward.

\begin{lemma}\label{lem:finite}
For any fixed $n,r>0$, there are finitely many possible ribbon codes with $n$ vertices and $r$ markings (up to isomorphism).
\end{lemma}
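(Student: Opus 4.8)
The plan is to show directly that there are only finitely many ribbon codes with vertex set $\{v_1,\dots,v_n\}$ and marking set $\{\mu_1,\dots,\mu_r\}$, which by the definition is already \emph{all} ribbon codes with $n$ vertices and $r$ markings; passing to isomorphism classes can only decrease this count, so the lemma follows. Concretely, I would enumerate the combinatorial data pinning down such a ribbon code in three pieces and bound each piece.

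The first piece is the underlying tree. By Cayley's formula there are exactly $n^{n-2}$ trees on the labeled vertices $v_1,\dots,v_n$ for $n\ge 2$ (and a single tree when $n=1$), so this is a finite choice; fix one such tree $\Gamma$ and recall that it has exactly $n-1$ edges. The second piece records the placement of the markings. Since each edge of $\Gamma$ is homeomorphic to an interval with the markings lying in its interior, an ambient isotopy of $\Gamma$ fixing the vertices can slide the markings freely along each edge, so the only surviving data is, for each edge, the linearly ordered tuple of markings it carries. Choosing this data amounts to assigning each of the $r$ markings to one of the $n-1$ edges and then ordering the markings on each edge, which can be done in at most $(n-1)^{r}\,r!$ ways. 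The third piece is the labels: each of the $r$ markings receives a label in $\{\pm 1,\dots,\pm n\}$, for at most $(2n)^{r}$ choices; as remarked in the text, the local directions contribute nothing further, being determined by the labels.

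Multiplying, the number of ribbon codes on the fixed vertex and marking sets is at most $n^{n-2}(n-1)^{r}\,r!\,(2n)^{r}<\infty$, which is all that is needed. I do not expect a genuine obstacle here, as the content is bookkeeping; the one place that warrants care is the second piece — justifying that a ribbon code remembers exactly the linear order of the markings along each edge (and nothing finer, e.g.\ not their precise positions, and, modulo the edge reversals that the isomorphism relation already permits, not an orientation of each edge), so that the count of marking placements is genuinely finite rather than merely plausible.
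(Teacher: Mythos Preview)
Your argument is correct; the paper does not actually supply a proof of this lemma, stating only that it is straightforward. Your explicit enumeration---Cayley's count of labeled trees, the bounded number of ways to distribute and order the $r$ markings among the $n-1$ edges, and the $(2n)^r$ choices of labels---is a perfectly valid way to make that straightforwardness precise, and your caveat about edge orientation is handled exactly as you say: with labeled vertices each edge has a canonical orientation, and passing to isomorphism classes only decreases the count.
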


\subsection{Alexander polynomials from ribbon codes}

What is perhaps less obvious is the next lemma, which we will use to show that the Alexander polynomial of a ribbon knot $K$ can be computed directly from its ribbon code.  To this end, suppose that $\Gamma$ is a ribbon code with $r$ markings $\mu_1,\dots,\mu_r$, and for each marking $\mu_{\ell}$, let $\sgn(\mu_{\ell}) = \pm 1$ denote the sign of its label.  In addition, define $\gamma_{\ell} \subset \Gamma$ to be the unique path in $\Gamma$ from the vertex $v_i$ to the marking $\mu_{\ell}$.  Finally, define the $\ell$-th \emph{marking function} $g_{\ell}:\{1,\dots,r\} \setminus \{\ell\} \rightarrow \{-1,0,1\}$ by
\[ g_{\ell}(m) = \begin{cases}
\sgn(\mu_m) & \text{if $\mu_m \in \gamma_{\ell}$ and the local direction at $\mu_m$ agrees with the direction of $\gamma_{\ell}$} \\
- \sgn(\mu_m) & \text{if $\mu_m \in \gamma_{\ell}$, the local direction at $\mu_m$ disagrees with the direction of $\gamma_{\ell}$} \\
0 & \text{if $\mu_m \notin \gamma_{\ell}$.}
\end{cases}\]

\begin{proposition}\label{prop:block}
Suppose that $K$ bounds a ribbon disk $\D$ with $r$ ribbon intersections and ribbon code $\Gamma$.  Then $K$ admits a $2r \times 2r$ block Seifert matrix $A$ of the form
\[ A = \begin{bmatrix}
0 & X \\ Y & Z \end{bmatrix}\]
such that the blocks $X$ and $Y$ are uniquely determined by $\Gamma$.  In particular, the entries $x_{m\ell}$ of $X$ and $y_{m\ell}$ of $Y$ are determined by the following rules:
\begin{enumerate}
\item If $m \neq \ell$, then $x_{m\ell} = y_{\ell m} = g_{\ell}(m)$.
\item If $\sgn(\mu_{\ell}) = 1$, then $x_{\ell \ell} = 0$ and $y_{\ell \ell} = -1$.
\item If $\sgn(\mu_{\ell}) = -1$, then $x_{\ell \ell} = 1$ and $y_{\ell \ell} = 0$.
\end{enumerate}
As a consequence, $X - Y^T = \text{Id}_{\,r}$.
\end{proposition}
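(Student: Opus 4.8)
The plan is to convert the disk-band presentation into an explicit genus-$r$ Seifert surface $F$ for $K$, write down a basis of $H_1(F;\Z)$ adapted to the ribbon code $\Gamma$, and read off the Seifert form. First I would resolve each of the $r$ ribbon intersections of the disk-band presentation $(D,B)$ exactly as in the proof of Lemma~\ref{lem:genus} (Figure~\ref{fig:smooth}): this produces a connected, oriented surface $F$ with $\partial F = K$, and since the underlying disk-and-bands $\D$ is contractible while each resolution attaches one handle, $g(F)=r$ and $b_1(F)=2r$. The resolution at the marking $\mu_\ell$ (where band $B_{j_\ell}$ pierces disk $D_{k_\ell}$) inserts a tube running alongside $B_{j_\ell}$; let $\alpha_\ell\subset F$ be the meridian (co-core) loop of this tube and $\beta_\ell\subset F$ a core loop, chosen so that $\beta_\ell$ runs along $B_{j_\ell}$ from the vertex $v_{i_\ell}$ at the initial end of $\gamma_\ell$, over the tube, and back inside a collar of $\partial F$. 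A computation with the intersection pairing on $F$ gives $\alpha_\ell\cdot\beta_m=\delta_{\ell m}$ and $\alpha_\ell\cdot\alpha_m=0$, so $\{\alpha_1,\dots,\alpha_r,\beta_1,\dots,\beta_r\}$ is a basis of $H_1(F;\Z)$ and (in the stated ordering) the associated Seifert matrix has the block shape $\left[\begin{smallmatrix}0&X\\ Y&Z\end{smallmatrix}\right]$ once we know the $\alpha\alpha$-block vanishes.

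\textbf{The ribbon input: obstruction disks.} The key geometric feature, which is exactly what being a ribbon surface buys us, is that each $\alpha_\ell$ bounds an embedded disk $\delta_\ell\subset S^3$ poking once through the $\ell$-th tube, with $\mathrm{int}(\delta_\ell)\cap F=\emptyset$, and the $\delta_\ell$ are pairwise disjoint because they are supported near the pairwise disjoint disks $D_k$. Pushing a parallel copy $\delta_m^+$ of $\delta_m$ slightly off $F$ provides a Seifert surface for $\alpha_m^+$ that meets $F$ only along $\alpha_m$; this instantly gives $\lk(\alpha_m^+,\alpha_\ell)=0$ for all $m,\ell$ (the top-left block), and reduces every mixed linking number to a signed count of intersections of a $\beta$-curve with a $\delta$-disk. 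For $m\neq\ell$, the curve $\beta_\ell$ meets $\delta_m$ exactly at those crossings of band $B_{j_\ell}$ with disk $D_{k_m}$ occurring between $v_{i_\ell}$ and $\mu_\ell$ along $B_{j_\ell}$ — that is, once for each marking on the path $\gamma_\ell$ that carries the label $k_m$, and never otherwise — while the collar return arc of $\beta_\ell$ avoids every $\delta_m$. Comparing the three competing orientations (the positive normal of $D_{k_m}$; the travel direction of $B_{j_\ell}$, which at $\mu_m$ is the direction of $\gamma_\ell$; and the local direction of $\mu_m$) identifies this signed count with $g_\ell(m)$. Hence $x_{m\ell}=\lk(\alpha_m^+,\beta_\ell)=g_\ell(m)$ and, symmetrically, $y_{\ell m}=\lk(\beta_\ell^+,\alpha_m)=g_\ell(m)$, which is rule~(1).

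\textbf{Diagonal entries and the conclusion.} For $m=\ell$ the computation is purely local to the $\ell$-th tube. Since $\alpha_\ell$ and $\beta_\ell$ meet exactly once on $F$, the two linking numbers $\lk(\alpha_\ell^+,\beta_\ell)$ and $\lk(\beta_\ell^+,\alpha_\ell)$ differ by $1$; inspecting the framing produced by the resolution — which is governed precisely by whether the tube is attached compatibly with the positive normal of $D_{k_\ell}$, i.e.\ by $\sgn(\mu_\ell)$ — one sees that the $0$ sits in the $X$-slot when $\sgn(\mu_\ell)=1$ and in the $Y$-slot when $\sgn(\mu_\ell)=-1$, giving rules~(2) and~(3). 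The remaining block $Z=(\lk(\beta_m^+,\beta_\ell))$ records how the bands are knotted and linked in $S^3$, information not retained by $\Gamma$, so it is left unconstrained. The identity $X-Y^T=\mathrm{Id}_r$ is then immediate from rules~(1)--(3): off the diagonal $(X-Y^T)_{m\ell}=x_{m\ell}-y_{\ell m}=g_\ell(m)-g_\ell(m)=0$, and on the diagonal both sign cases give $x_{\ell\ell}-y_{\ell\ell}=1$. Conceptually this is forced, since $X-Y^T$ is the restriction of the intersection form of $F$ to $\langle\alpha_1,\dots,\alpha_r\rangle\times\langle\beta_1,\dots,\beta_r\rangle$, which is the identity because $\alpha_\ell\cdot\beta_m=\delta_{\ell m}$.

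\textbf{Main obstacle.} I expect the real work to be the sign bookkeeping in the middle step: both the off-diagonal matching of the geometric intersection sign of $\beta_\ell\cap\delta_m$ with the combinatorial quantity $g_\ell(m)$ — which requires fixing, once and for all, compatible conventions for the normal orientations of the disks, the travel directions of the bands, the local directions at markings, and the push-off direction used to define the Seifert form — and the local framing analysis at a single tube, which must be done carefully enough to produce the asymmetry between $x_{\ell\ell}$ and $y_{\ell\ell}$ in the two cases $\sgn(\mu_\ell)=\pm 1$. By contrast, the existence of $F$ with $g(F)=r$, the verification that the $\alpha$'s and $\beta$'s form a basis, the vanishing of the $\alpha\alpha$-block, and the final identity are all routine once the resolved disk-band surface and its obstruction disks are set up precisely.
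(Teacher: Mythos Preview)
Your strategy matches the paper's almost exactly: resolve the ribbon intersections to get a genus-$r$ Seifert surface $F$, take $\alpha_\ell$ to be the meridian of the $\ell$-th tube (the paper writes $\alpha_\ell=\partial N_\ell$), take $\beta_\ell$ to follow the tree path $\gamma_\ell$ through the disk-band structure and over the $\ell$-th tube, and compute the $X$- and $Y$-blocks as signed intersections of $\beta_\ell$ with the capping disks you call $\delta_m$ (the paper's $N_m$). Two small corrections to your description that do not affect the argument: $\gamma_\ell$ may traverse several edges of $\Gamma$, so $\beta_\ell$ is not confined to the single band $B_{j_\ell}$; and $\beta_\ell$ meets $\delta_m$ exactly when the \emph{specific} marking $\mu_m$ lies on $\gamma_\ell$, not merely when some marking on $\gamma_\ell$ carries the label $k_m$ --- this is what makes the answer $g_\ell(m)$ rather than a sum over markings with a given label.
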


\begin{proof}
Suppose $(D,B)$ is a disk-band presentation for a ribbon disk $\D$ bounded by $K$ that gives rise to a ribbon code $\Gamma$, where $\D$ has $r$ ribbon intersections.  Let $F$ be the Seifert surface obtained by smoothing the ribbon intersections of $\D$ as in the proof of Lemma~\ref{lem:genus}.  We will construct a basis for $H_1(F)$ in order to find a Seifert matrix $A$.

Each ribbon intersection in $\D$ corresponds to two arcs $a_{\ell}$ and $a_{\ell}'$ in $\D$, where $a_{\ell}'$ is properly embedded and $a_{\ell}$ is embedded in $\text{int}(\D)$.  Let $N_i$ denote a regular neighborhood of $a_{\ell}$ in $\D$, and let $\A_{\ell}$ denote $\pd N_{\ell}$, oriented counterclockwise with respect to the normal direction of $\D$.  In addition, let $A_{\ell}$ be the annulus in $F$ whose boundary consists of $\A_{\ell}$ and another curve made up of two arcs in $K$ and two arcs in $\text{int}(F)$, as shown in Figure~\ref{fig:ii1}.  By construction, we have
\[ F = \D \setminus \left( \bigcup N_{\ell}  \cup \bigcup a_{\ell}' \right) \cup \bigcup A_{\ell}.\]
In this way, we can view the $\A_{\ell}$ curves in both $\D$ and in $F$, yielding half of the curves in our basis of $H_1(F)$.  Since the disks $N_i$ are pairwise disjoint, $\text{lk}(\A_{\ell},\A_m^+) = 0$ for all $\ell$ and $m$, and so the upper left block of $A$ is the zero block.

As above, we denote the $r$ markings of $\Gamma$ by $\mu_1,\dots,\mu_r$, where $\mu_{\ell}$ is associated to the ribbon intersection between a disk $D_i$ and a band $B_j$ yielding arcs $a_{\ell}$ and $a'_{\ell}$ in $\D$.  Let $b_{\ell}$ be an embedded path in $\D$ from $a_{\ell}$ to $a'_{\ell}$, chosen so that $b_{\ell}$ avoids all other arcs of the form $a_m$ and crosses $a'_m$ at most once for $m \neq {\ell}$.  Then $b_{\ell}$ corresponds to the path $\gamma_{\ell}$ in the tree $\Gamma$ from the vertex $v_i$ (associated to the disk $D_i$) to the marking $\mu_{\ell}$, and the arcs $a'_m$ that $b_{\ell}$ crosses correspond to the markings $\mu_m$ contained in $\gamma_{\ell}$.  Construct $\n_{\ell}$ in $F$ by starting with the arc $b_{\ell} \setminus \left( \bigcup N_{\ell}  \cup \bigcup a'_{\ell} \right)$ and connecting the endpoints in the annulus with an arc in $A_{\ell}$, as in Figures~\ref{fig:ii1} and~\ref{fig:ii2}.  By construction, $|\A_{\ell} \cap \n_m| = \delta_{\ell m}$, and so $\{\A_1,\dots,\A_r,\n_1,\dots,\n_r\}$ is a basis for $H_1(F)$.

\begin{figure}[h!]
  \centering
  \includegraphics[width=0.9\linewidth]{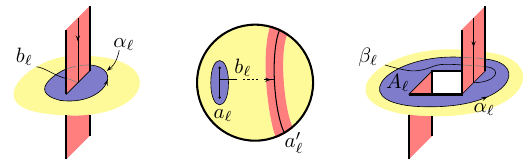}
  \caption{The interaction of $\A_{\ell}$ and $\n_{\ell}$ when $\sgn(\mu_{\ell}) = 1$ 
  }
\label{fig:ii1}
\end{figure}

\begin{figure}[h!]
  \centering
  \includegraphics[width=0.9\linewidth]{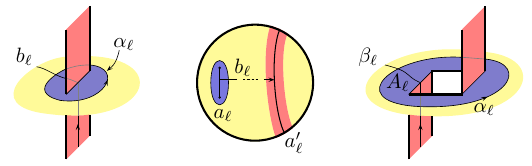}
  \caption{The interaction of $\A_{\ell}$ and $\n_{\ell}$ when $\sgn(\mu_{\ell}) = -1$ 
  }
\label{fig:ii2}
\end{figure}

We can use Figures~\ref{fig:ii1} and~\ref{fig:ii2} to compute $\text{lk}(\A_{\ell},\n_{\ell}^+)$ and $\text{lk}(\n_{\ell},\A_{\ell}^+)$.  The case in which $\sgn(\mu_{\ell}) = 1$ is shown in Figure~\ref{fig:ii1}; we can verify
\[ \text{lk}(\A_{\ell},\n_{\ell}^+) = 0 \quad \text{and} \quad \text{lk}(\n_{\ell},\A_{\ell}^+) = -1.\]
The case in which $\sgn(\mu_{\ell}) = -1$ is shown in Figure~\ref{fig:ii2}; we have
\[ \text{lk}(\A_{\ell},\n_{\ell}^+) = 1 \quad \text{and} \quad \text{lk}(\n_{\ell},\A_{\ell}^+) = 0.\]

For $m \neq \ell$, we have that $\text{lk}(\A_m,\n_{\ell}^+) = \text{lk}(\n_{\ell},\A_m^+)  \neq 0$ if and only if $\n_{\ell}$ passes through the disk $N_m$, as in Figure~\ref{fig:ij}.  These intersections correspond precisely with the markings contained in the path $\gamma_{\ell}$.  Any such marking $\mu_m$ corresponds to the ribbon intersection giving rise to $N_m$ and $\A_m$, and we can see that
\[ \text{lk}(\A_m,\n_{\ell}^+) = \text{lk}(\n_{\ell},\A_m^+) = \pm 1,\]
depending on whether the direction of $b_{\ell}$ and the normal direction at $a'_m$ agree ($+1$) or disagree ($-1$).  Note that $\sgn(\mu_{\ell})$ indicates whether the local direction at $a_m$ agrees or disagrees with the normal direction at $a'_m$.  Thus, $\text{lk}(\A_m,\n_{\ell}^+) = \text{lk}(\n_{\ell},\A_m^+) = 1$ precisely when the direction of $\gamma_{\ell}$ agrees with the local direction at $\mu_{\ell}$ and $\sgn(\mu_{\ell}) = 1$ or when the direction of $\gamma_{\ell}$ disagrees with the local direction at $\mu_{\ell}$ and $\sgn(\mu_{\ell}) = - 1$.  Conversely, $\text{lk}(\A_m,\n_{\ell}^+) = \text{lk}(\n_{\ell},\A_m^+) = -1$ precisely when the direction of $\gamma_{\ell}$ agrees with the local direction at $\mu_{\ell}$ and $\sgn(\mu_{\ell}) = -1$ or when the direction of $\gamma_{\ell}$ disagrees with the local direction at $\mu_{\ell}$ and $\sgn(\mu_{\ell}) = 1$. Succinctly,
\[ \text{lk}(\A_m,\n_{\ell}^+) = \text{lk}(\n_{\ell},\A_m^+) = g_{\ell}(m).\]
Noting that the $(m,\ell)$-th entry of the matrix $X - Y^T$ is given by $\text{lk}(\A_m,\n_{\ell}^+) - \text{lk}(\n_{\ell},\A_m^+)$, we have $X - Y^T = \text{Id}_{r}$, completing the proof.

\begin{figure}[h!]
  \centering
  \includegraphics[width=0.9\linewidth]{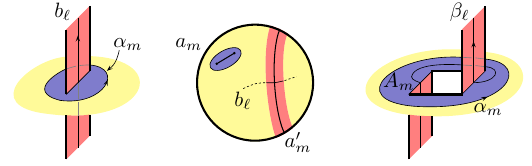}
  \caption{The interaction of $\A_m$ and $\n_{\ell}$ when the direction of $b_{\ell}$ agrees with the normal direction at $a_m$ 
  }
\label{fig:ij}
\end{figure}

\end{proof}

\begin{remark}
The ribbon code does not determine the block $Z$ in Proposition~\ref{prop:block} above.  This block, whose entries are of the form $\text{lk}(\n_m,\n_{\ell}^+)$, is determined by the homotopy type and twisting of the bands $B$ away from a neighborhood of the ribbon intersections.  Indeed, although the $X$ and $Y$ blocks are the same in the Seifert matrices produced by the examples shown in Figure~\ref{fig:ex2}, the $Z$ blocks are quite different.
\end{remark}

As a corollary, we obtain

\begin{corollary}\label{cor:codes}
If $K$ and $K'$ are two ribbon knots with ribbon disks giving rise to isomorphic ribbon codes, then $\Delta_K = \Delta_{K'}$.
\end{corollary}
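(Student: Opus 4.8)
The plan is to deduce everything from Proposition~\ref{prop:block}, using the standard fact that the Alexander polynomial of a knot is determined, up to units, by any Seifert matrix $A$ via the formula $\Delta_K(t) \doteq \det(t^{1/2}A - t^{-1/2}A^T)$ (equivalently $\det(A - tA^T)$ up to units). So it suffices to show that the block Seifert matrices $A$ and $A'$ produced by Proposition~\ref{prop:block} for $K$ and $K'$, while not themselves equal (the $Z$-blocks may differ, as the remark emphasizes), nonetheless yield the same Alexander polynomial.

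First I would fix an isomorphism $\varphi \colon \Gamma \to \Gamma'$ of ribbon codes, inducing a permutation $\sigma \in S_n$ on vertices and a bijection on markings; I would choose the labelings of the $r$ markings of $\Gamma'$ to be compatible with those of $\Gamma$ under $\varphi$, and order the basis curves $\{\A'_\ell, \n'_\ell\}$ accordingly. By Proposition~\ref{prop:block}, the $X$ and $Y$ blocks are functions of the marking data alone — the entries $x_{m\ell}, y_{\ell m}$ are given by $g_\ell(m)$ for $m \neq \ell$ and by $\sgn(\mu_\ell)$ on the diagonal — and since $\varphi$ preserves the tree structure, the paths $\gamma_\ell$, the signs, and the local directions, it preserves the functions $g_\ell$. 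Hence $X = X'$ and $Y = Y'$, so
\[ A = \begin{bmatrix} 0 & X \\ Y & Z \end{bmatrix}, \qquad A' = \begin{bmatrix} 0 & X \\ Y & Z' \end{bmatrix} \]
differ only in the lower-right block.

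The remaining step is the computation showing that the change $Z \rightsquigarrow Z'$ does not affect $\det(A - tA^T)$. Write $M(t) = A - tA^T$; in block form,
\[ M(t) = \begin{bmatrix} 0 & X - tY^T \\ Y - tX^T & Z - tZ^T \end{bmatrix}. \]
The point is that the anti-diagonal blocks $X - tY^T$ and $Y - tX^T$ are themselves invertible over $\Q(t)$: indeed $X - Y^T = \mathrm{Id}_r$ by Proposition~\ref{prop:block}, so $X - tY^T = Y^T + (1-t)\cdot$(something)\,; more cleanly, $X - tY^T = (X - Y^T) + (1-t)Y^T = \mathrm{Id}_r - (t-1)Y^T$, which is invertible over $\Q(t)$ since $(t-1)Y^T$ is a polynomial matrix vanishing at $t=1$, hence not identically a unit eigenvalue obstruction — and its determinant is a nonzero polynomial. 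Granting invertibility of the off-diagonal blocks, the Schur-complement / row-operation identity for a block matrix with a zero top-left block gives
\[ \det M(t) = (-1)^r \det(X - tY^T)\,\det(Y - tX^T), \]
with \emph{no dependence on the lower-right block at all}: one clears the $(2,2)$-block by subtracting a suitable left-multiple of the second block-row of equations, which is legitimate precisely because $Y - tX^T$ is invertible. Therefore $\det(A - tA^T) = \det(A' - tA'^T)$, and $\Delta_K \doteq \Delta_{K'}$.

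The main obstacle is the justification that the off-diagonal block $X - tY^T$ (equivalently $Y - tX^T$) is invertible over $\Q(t)$, so that the block-elimination is valid and the $Z$-block genuinely drops out; one should verify this from the structure $X - tY^T = \mathrm{Id}_r - (t-1)Y^T$, checking that its determinant is not the zero polynomial (e.g. by specializing $t$ to a value, or by noting it equals $\pm 1$ at $t=1$). A secondary bookkeeping point is to make the correspondence of ordered bases under $\varphi$ precise enough that ``$X = X'$, $Y = Y'$'' is literally true rather than true up to simultaneous permutation — but a simultaneous permutation of basis elements conjugates $A$ and leaves $\det(A - tA^T)$ unchanged anyway, so even a sloppy identification is harmless.
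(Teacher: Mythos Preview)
Your proposal is correct and follows the same overall strategy as the paper: use Proposition~\ref{prop:block} to get block Seifert matrices that agree in the $X$ and $Y$ blocks, then show $\det(A-tA^T)$ is independent of the $Z$-block. The difference is only in how you justify that last step. You invoke a Schur-complement / row-elimination argument, which forces you to check that $X-tY^T$ is invertible over $\Q(t)$ (and you do check this, via $X-tY^T=\mathrm{Id}_r-(t-1)Y^T$ having determinant $1$ at $t=1$). The paper instead appeals to the block-determinant identity $\det\begin{pmatrix}P&Q\\R&S\end{pmatrix}=\det(PS-RQ)$ valid whenever $P$ and $R$ commute; since the top-left block is $0$, this applies with no invertibility hypothesis and gives $\Delta_K(t)\doteq\det\bigl((X-tY^T)(Y-tX^T)\bigr)$ directly. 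Your route is a little longer but perfectly sound; the paper's identity is the cleaner tool here. Your bookkeeping about the isomorphism $\varphi$ and the harmlessness of a simultaneous basis permutation is also fine (and in fact more careful than the paper, which silently treats isomorphic codes as literally equal).
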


\begin{proof}
Since $K$ and $K'$ induce the same ribbon code, it follows from Proposition~\ref{prop:block} that $K$ and $K'$ admit Seifert matrices $A$ and $A'$, respectively, of the form
\[ A = \begin{bmatrix}
0 & X \\ Y & Z \end{bmatrix} \quad \text{and} \quad
A' = \begin{bmatrix}
0 & X' \\ Y' & Z' \end{bmatrix}.\]
We compute
\[ \Delta_K(t) = \det(A - tA^T) = \begin{vmatrix} 0 & X - tY^T \\ Y - tX^T & Z \end{vmatrix} .\]
Since $0$ commutes with $Y - tX^T$, it follows from elementary linear algebra that
\[ \begin{vmatrix} 0 & X - tY^T \\ Y - tX^T & Z \end{vmatrix} = \det( 0 \cdot Z - (X-tY^T) \cdot (Y-tX^T)) = \det((X-tY^T)(Y-tX^T)).\]
An identical calculation shows that
\[ \Delta_{K'}(t) = \det((X-tY^T)(Y-tX^T)).\]
\end{proof}

Another nice application of Proposition~\ref{prop:block} helps us understand knots with similar but non-identical ribbon codes.  Although this is not relevant to our later analysis, it may be of independent interest for the future study of ribbon codes.

\begin{corollary}
Suppose ribbon knots $K$ and $K'$ bound disks with ribbon codes that are identical except for the sign of a single marking.  Then $\det(K) = \det(K')$.
\end{corollary}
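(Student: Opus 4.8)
The plan is to reduce the determinant to a single $r \times r$ matrix built from the $Y$-block of Proposition~\ref{prop:block}, and then to observe that flipping the sign of one marking merely negates a column of that matrix. First, let $\D$ be a ribbon disk for $K$ with $r$ ribbon intersections and ribbon code $\Gamma$, and let $X, Y, Z$ be the blocks of the Seifert matrix produced by Proposition~\ref{prop:block}. As computed in the proof of Corollary~\ref{cor:codes}, $\Delta_K(t) = \det\big((X - tY^T)(Y - tX^T)\big)$. Evaluating at $t = -1$ and using $X - Y^T = \text{Id}_r$ from Proposition~\ref{prop:block}, we have $X + Y^T = \text{Id}_r + 2Y^T$ and $Y + X^T = X^T + Y = (X + Y^T)^T$, whence
\[ \det K = |\Delta_K(-1)| = \big|\det(X + Y^T)\cdot\det\!\big((X+Y^T)^T\big)\big| = \det(\text{Id}_r + 2Y)^2.\]
So it suffices to show: if $Y$ and $Y'$ are the $Y$-blocks of the two given ribbon codes, then $\det(\text{Id}_r + 2Y') = \pm\det(\text{Id}_r + 2Y)$.

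Next, suppose the two codes differ exactly at the marking $\mu_k$. By Proposition~\ref{prop:block}, for $\ell \neq m$ the entry $y_{\ell m}$ is the value $g_\ell(m)$, which is determined by the underlying tree, by the local directions at the markings and the paths $\gamma_\ell$ (all of which depend only on the integer labels and not on their signs, hence are unchanged), and by the single sign $\sgn(\mu_m)$; the diagonal entry $y_{\ell\ell}$ depends only on $\sgn(\mu_\ell)$. Therefore $Y$ and $Y'$ agree outside column $k$. In column $k$: for $\ell \neq k$ the value $g_\ell(k)$ is either $0$ or $\pm\sgn(\mu_k)$, so flipping $\sgn(\mu_k)$ sends $y_{\ell k} = g_\ell(k)$ to $-g_\ell(k) = -y_{\ell k}$; and for the diagonal entry, $y_{kk} \in \{-1, 0\}$ satisfies $1 + 2y_{kk} = -\sgn(\mu_k)$, so after the flip $1 + 2y'_{kk} = \sgn(\mu_k) = -(1 + 2y_{kk})$. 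Hence the $k$-th column of $\text{Id}_r + 2Y'$ is exactly the negative of the $k$-th column of $\text{Id}_r + 2Y$, with all other columns unchanged.

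Finally, multilinearity of the determinant in the $k$-th column gives $\det(\text{Id}_r + 2Y') = -\det(\text{Id}_r + 2Y)$, so $\det K' = \det(\text{Id}_r + 2Y')^2 = \det(\text{Id}_r + 2Y)^2 = \det K$, as desired. The only genuine work is the bookkeeping in the second paragraph: verifying that none of the local directions, the paths $\gamma_\ell$, or the remaining signs move when a single sign flips, so that the effect on the Seifert data is confined to column $k$ of $Y$, and there is an honest negation. I expect this careful sign-tracking — rather than any conceptual difficulty — to be the main obstacle.
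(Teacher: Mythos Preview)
Your proof is correct and follows essentially the same approach as the paper's: both reduce $\det K$ to the square of the determinant of a single $r\times r$ matrix (the paper uses $X+Y^T$, you use its transpose $\text{Id}_r+2Y$), and both verify that flipping the sign of $\mu_k$ negates exactly one row or column of that matrix. Your choice to work with $\text{Id}_r+2Y$ is arguably cleaner, since the effect of the sign flip is confined to column $k$ of $Y$ and the diagonal identity $1+2y_{kk}=-\sgn(\mu_k)$ falls out directly; the paper's version tracks the same sign changes through $X+Y^T$ with slightly more index-juggling.
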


\begin{proof}
Suppose that $K$ bounds a disk giving rise to $\Gamma$ and $K'$ bounds a disk giving rise to $\Gamma'$, where $\Gamma$ and $\Gamma'$ are identical except for corresponding markings $\mu_m$ and $\mu_m'$, which are labeled $+i$ in $\Gamma$ and $-i$ in $\Gamma'$ for some $i \geq 1$.  Then $K$ and $K'$ admit block Seifert matrices $A$ and $A'$, respectively, of the form
\[ A = \begin{bmatrix}
0 & X \\ Y & Z \end{bmatrix} \quad \text{and} \quad
A' = \begin{bmatrix}
0 & X \\ Y & Z' \end{bmatrix}\]
by Proposition~\ref{prop:block}, where $Y^T = X - \text{Id}_r $ and $(Y')^T = X' - \text{Id}_r$.  It follows that
\[ \det(K) = \Delta_K(-1) =  \det((X+Y^T)(Y+X^T)) = (\det(X+Y^T))^2.\]
Similarly, $\det(K') = (\det(X'+(Y')^T))^2$.  We prove the corollary by comparing the values of $x_{ij} + y_{ji}$ and $x'_{ij} + y'_{ji}$.

Let $g_{\ell}$ and $g'_{\ell}$ denote the marking functions corresponding to $\Gamma$ and $\Gamma'$, respectively.  Now, observe that the only entries of the matrices $X + Y^T$ and $X' + (Y')^T$ that depend on $\sgn(\mu_m)$ and $\sgn(\mu_m')$ are the entries $x_{\ell m} + y_{m \ell}$ and $x'_{\ell m} + y'_{m \ell}$, so that $X + Y^T$ and $X'+(Y')^T$ are identical outside of their $m$th columns.  Moreover, the local directions at $\mu_m$ and $\mu'_m$ are independent of their sign, and so for any $\ell \neq m$, there are three possibilities:
\begin{enumerate}
\item $g_{\ell}(m) = \sgn(\mu_m)$ and $g'_{\ell}(m) = \sgn(\mu'_m) = -\sgn(\mu_m)$,
\item $g_{\ell}(m) = -\sgn(\mu_m)$ and $g'_{\ell}(m) = -\sgn(\mu'_m) = \sgn(\mu_m)$, or
\item $g_{\ell}(m) = g'_{\ell}(m) = 0$.
\end{enumerate}
In each of the three cases, we have $x'_{\ell m } = y'_{m \ell} = -x_{\ell m} = -y_{\ell m}$, and thus $x'_{\ell m} + y'_{m \ell} = -(x_{\ell m} + y_{m \ell})$ whenever $\ell \neq m$.  On the other hand, if $\sgn(\mu_m) = 1$, then $x_{\ell \ell} = 0$ and $y_{\ell \ell} = -1$, and $\sgn(\mu'_m) = -1$, so that $x'_{\ell \ell} = 1$ and $y'_{\ell \ell} = 0$.  Otherwise, $\sgn(\mu_m) = -1$, so that $x_{\ell \ell} = 1$ and $y_{\ell \ell} = 0$, and $\sgn(\mu'_m) = 1$, so that $x'_{\ell \ell } = 0$ and $y'_{\ell\ell} = -1$.  In either case, we again get $x'_{\ell \ell} + y'_{\ell \ell} = -(x_{\ell\ell}+ y_{\ell \ell})$.  We conclude that the matrices $X + Y^T$ and $X'+(Y')^T$ are identical except that the $m$th column of $X'+(Y')^T$ has entries opposite the $m$th column of $X + Y^T$, and therefore $\det(X'+ (Y')^T) = - \det(X + Y^T)$, completing the proof.
\end{proof}

\begin{remark}
Eisermann noted in Remark 5.21 of~\cite{eisermann} so-called \emph{band crossing changes} do not have any effect on the Alexander polynomial, using an argument similar to the one here.  Other authors have also examined and discussed these ideas.  In~\cite{KSTI}, the authors also analyzed the Seifert surfaces and matrices arising from ribbon disks so that they could obstruct knots from being \emph{simple-ribbon knots} using Alexander polynomials.  Their analysis is similar to that appearing in the proof of Proposition~\ref{prop:block}.  In~\cite{bai}, the author used ``ribbon diagrams" and ``ribbon graphs" to compute Alexander polynomials.  The work in that paper is also similar (but not identical) to the proof of Proposition~\ref{prop:block}.
\end{remark}

\section{Classifying low-complexity ribbon codes and Alexander polynomials}\label{sec:code}

In this section, we use ribbon codes to classify possible Alexander polynomials for ribbon knots $K$ with small ribbon numbers, which we will use in Section~\ref{sec:tab} to tabulate ribbon numbers for knots with 11 or fewer crossings.

\subsection{Simplification of ribbon codes}

In order to determine all possible low-complexity ribbon codes, we will use several tools to reduce certain configurations to simpler ones.  We define the \emph{fusion number} $\F(\Gamma)$ of a ribbon code $\Gamma$ to be the number of edges in the graph $\Gamma$ and the \emph{ribbon number} $r(\Gamma)$ to be the number of markings (noting that these quantities coincide with the corresponding complexities of the ribbon disks they represent).  Additionally, by Corollary~\ref{cor:codes}, a ribbon code uniquely determines an Alexander polynomial, and so we let $\Delta_{\Gamma}(t)$ or simply $\Delta_\Gamma$ denote the Alexander polynomial determined by the ribbon code $\Gamma$.  For simplicity, we also use $\Delta_K$ instead of $\Delta_K(t)$ in this section.  The next five lemmas provide parameters which will allow us to narrow our search for possible Alexander polynomials.

\begin{lemma}\label{lem:simp1a}
Suppose $\Gamma$ is a ribbon code such that an edge of $\Gamma$ contains no markings.  Then there is a ribbon code $\Gamma'$ such that $\Delta_{\Gamma'} = \Delta_{\Gamma}$, $\F(\Gamma') < \F(\Gamma)$, and $r(\Gamma') = r(\Gamma)$.
\end{lemma}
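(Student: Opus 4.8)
The plan is to obtain $\Gamma'$ by contracting the markingless edge, and then to argue that neither the boundary knot nor the data feeding the Alexander polynomial has changed. Concretely, let $e_j$ be an edge of $\Gamma$ with no markings, with endpoints $v_{i_1}$ and $v_{i_2}$. First I would fix a disk-band presentation $(D,B)$ realizing $\Gamma$; in it, $e_j$ corresponds to a band $B_j$ attached to $\partial D_{i_1}$ and $\partial D_{i_2}$ and, because $e_j$ carries no markings, disjoint from the interiors of all disks of $D$. Then $D' := D_{i_1}\cup B_j\cup D_{i_2}$ is an embedded disk, and replacing $D_{i_1},D_{i_2}$ by $D'$ and discarding $B_j$ gives a disk-band presentation $(D',B')$ with one fewer disk and one fewer band whose underlying immersed disk is unchanged. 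I would take $\Gamma'$ to be the ribbon code of $(D',B')$; combinatorially it is $\Gamma$ with $e_j$ collapsed to a single vertex $v$, all $r$ markings and local directions retained, and any marking labeled $\pm i_1$ or $\pm i_2$ relabeled $\pm v$.

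Next I would check that $\Gamma'$ is a legitimate ribbon code and record the easy inequalities. Contracting an edge of a tree produces a tree, so the only point to verify is that the relabeling $\pm i_1, \pm i_2 \mapsto \pm v$ is consistent with the local directions: this holds because the unique path in $\Gamma$ between $v_{i_1}$ and $v_{i_2}$ is $e_j$ itself, which is markingless, so $v_{i_1}$ and $v_{i_2}$ lie in the same component of $\Gamma\setminus\mu_m$ for every marking $\mu_m$ and hence determine a well-defined side of $v$. From here $r(\Gamma') = r = r(\Gamma)$ and $\F(\Gamma') = (n-1)-1 = n-2 < n-1 = \F(\Gamma)$ are immediate.

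For the equality $\Delta_{\Gamma'} = \Delta_{\Gamma}$, the quick route is that $\partial\D$ has not changed, so $\Delta_{\Gamma'} = \Delta_{\partial\D} = \Delta_{\Gamma}$. If instead one wants an argument internal to the ribbon-code formalism, I would invoke Proposition~\ref{prop:block}: the blocks $X$ and $Y$ of the block Seifert matrix are determined by the signs $\sgn(\mu_{\ell})$ and the marking functions $g_{\ell}$, and contracting the markingless $e_j$ changes neither — each path $\gamma_{\ell}$ at worst loses the edge $e_j$, which contributes no marking, so the markings on $\gamma_{\ell}$ together with their local directions and the orientation of $\gamma_{\ell}$ are untouched, while the signs are obviously untouched. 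Then $X,Y$ agree for $\Gamma$ and $\Gamma'$, and the identity $\Delta(t) = \det\big((X - tY^{T})(Y - tX^{T})\big)$ from the proof of Corollary~\ref{cor:codes} finishes it. The only point requiring care — and the main obstacle, such as it is — is the observation that a markingless edge is precisely a band that may be absorbed into a fused disk without creating or destroying a ribbon intersection, equivalently that it is a bridge separating no marking; once this is pinned down, everything else is bookkeeping.
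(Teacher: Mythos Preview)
Your proof is correct and follows essentially the same approach as the paper: both fuse $D_{i_1}\cup B_j\cup D_{i_2}$ into a single disk to obtain a disk-band presentation with one fewer band, observing that the markingless edge corresponds to a band whose interior misses all disks. Your additional remarks---checking the relabeling is consistent and offering the alternative $X,Y$-block argument via Proposition~\ref{prop:block}---go a bit beyond the paper's proof but are sound.
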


\begin{proof}
Let $(D,B)$ be a disk-band presentation for a ribbon disk $\D$ giving rise to the ribbon code $\Gamma$, where $D = \{D_1,\dots,D_n\}$ and $B = \{B_1,\dots,B_{n-1}\}$.  Possibly after reindexing, suppose that the band $B_{n-1}$ corresponds to the edge $e$ in $\Gamma$ with no markings, and $B_{n-1}$ is attached to disks $D_{n-1}$ and $D_n$.  Since $e$ contains no markings, the interior of $B_{n-1}$ is disjoint from $D$.  Let $D_{n-1}' = D_{n-1} \cup B_{n-1} \cup D_n$.  Then $D' = \{D_1,\dots,D_{n-2},D'_{n-1}\}$ is a collection of embedded disks.  If in addition we let $B' = \{B_1,\dots,B_{n-2}\}$, we have that $(D',B')$ is a disk-band presentation for a ribbon disk $\D'$ such that $\pd \D' = \pd \D$, $\F(\D') < \F(\D)$, and $r(\D') = r(\D')$.  We conclude that the ribbon code $\Gamma'$ for $\D'$ satisfies $\Delta_{\Gamma'}(t) = \Delta_{\Gamma}(t)$ and $\F(\Gamma') < \F(\Gamma)$, as desired.
\end{proof}

\begin{lemma}\label{lem:simp1b}
Suppose $\Gamma$ is a ribbon code such that an edge of $\Gamma$ contains consecutive markings with labels $i$ and $-i$.  Then there is a ribbon code $\Gamma'$ such that $\Delta_{\Gamma'} = \Delta_{\Gamma}$ and $r(\Gamma') < r(\Gamma)$.
\end{lemma}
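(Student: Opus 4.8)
The plan is to read the hypothesis geometrically. Two consecutive markings on an edge $e_j$, one labeled $+i$ and the other $-i$, correspond to a band $B_j$ that pierces the disk $D_i$ twice in immediate succession with \emph{opposite} co-orientations; that is, to a ``bite'' of $B_j$ that pokes through $D_i$ and comes straight back out. Such a bite should be removable by an ambient isotopy, the ribbon analogue of a Reidemeister II move, in the same spirit as the removal of consecutive under-crossings in Lemma~\ref{lem:underpass} and the absorption of an unmarked band in Lemma~\ref{lem:simp1a}. Performing it deletes exactly the two offending ribbon intersections while leaving the boundary knot unchanged, which is everything the lemma asks for.

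In detail, I would fix an oriented disk-band presentation $(D,B)$ for a ribbon disk $\D$ realizing $\Gamma$, and reindex so that the two consecutive markings are $\mu_a$ with label $+i$ and $\mu_b$ with label $-i$, both on the edge $e_j$ corresponding to a band $B_j$ that pierces $D_i$ in two spanning arcs $a'_a$ and $a'_b$. Let $R\subset B_j$ be the closed sub-rectangle of $B_j$ bounded by $a'_a$ and $a'_b$. Consecutiveness of $\mu_a,\mu_b$ forces $R$ to meet $\bigcup D$ only in $a'_a\cup a'_b\subset\text{int}(D_i)$; disjointness of the bands forces $R$ to meet $\bigcup B$ only in itself; and the opposite signs mean $B_j$ traverses $D_i$ in opposite normal directions at the two ribbon intersections, so near $D_i$ both ends of $R$ emanate from the same side of $D_i$. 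I would then slide the scar $a_b\subset\text{int}(D_i)$ along an embedded arc in $\text{int}(D_i)$ missing every other ribbon scar on $D_i$ until it lies next to $a_a$ (using that a disk with finitely many arcs having interior endpoints removed is still connected), dragging $R$ along by an ambient isotopy; the two ribbon intersections now sit in a small ball meeting the rest of $\D$ only in a sub-disk of $D_i$, where the opposite co-orientations permit cancelling them outright. This produces a disk-band presentation $(D,B')$, with $B'$ obtained from $B$ by replacing $B_j$, for a ribbon disk $\D'$ with $\partial\D'$ isotopic to $\partial\D$.

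Since the isotopy is supported in a neighborhood of $R$ together with an arc on $D_i$, away from every other ribbon intersection and introducing no half-twists, the ribbon code $\Gamma'$ of $\D'$ is exactly $\Gamma$ with the markings $\mu_a$ and $\mu_b$ deleted, all other markings and labels unchanged; hence $r(\Gamma')=r(\Gamma)-2<r(\Gamma)$. Because $\partial\D'$ and $\partial\D$ represent the same knot, the well-definedness of $\Delta_{(-)}$ on ribbon codes coming from Corollary~\ref{cor:codes} gives $\Delta_{\Gamma'}=\Delta_{\partial\D'}=\Delta_{\partial\D}=\Delta_{\Gamma}$, completing the proof.

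The main obstacle I anticipate is making the finger-withdrawal rigorous: setting up the local model near $D_i$, checking the isotopy can be chosen supported away from all other ribbon intersections so that the code changes only by deleting $\mu_a,\mu_b$, and dealing with the edge case in which $D_i$ is itself an endpoint of the edge $e_j$ (the band pierces one of its own attaching disks). A purely combinatorial alternative would bypass the pictures: consecutiveness forces the marking functions to satisfy $g_a(m)=g_b(m)$ for every $m\neq a,b$, which constrains the rows and columns of the block Seifert matrix of Proposition~\ref{prop:block} indexed by $\mu_a$ and $\mu_b$ to an elementary pattern, so that row and column operations split off a $2\times 2$ block of unit determinant from $(X-tY^T)(Y-tX^T)$ and identify the rest with the analogous $(r-2)\times(r-2)$ determinant for $\Gamma'$; this route is correct but more computational, and I would present the geometric argument as primary.
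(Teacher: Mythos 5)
Your geometric step has a genuine gap at the cancellation. After you slide the scar $a_b$ next to $a_a$ inside $D_i$, it is not true that ``the two ribbon intersections now sit in a small ball meeting the rest of $\mathcal{D}$ only in a sub-disk of $D_i$'': only the two scars are localized, while the sub-band $R$ joining the two piercings still wanders through $S^3$ and can be knotted, or hooked around other bands, disks, or $\partial\mathcal{D}$. What the cancellation actually requires is an embedded disk $E$ whose boundary is an arc in $B_j$ (the core of $R$) together with an arc in $D_i$ and whose interior misses $\mathcal{D}$; for an \emph{arbitrary} disk-band presentation realizing $\Gamma$ such an $E$ need not exist, and then no ambient isotopy withdraws the finger. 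This is not a removable technicality: the paper's remark following Lemma~\ref{lem:simp2}, illustrated by the Kinoshita--Terasaka disk in Figure~\ref{fig:KT}, is precisely the warning that a disk realizing a reducible code may itself admit no simplification, so an argument that starts from a fixed, arbitrary $\mathcal{D}$ realizing $\Gamma$ cannot be completed in the generality you assume.

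The paper sidesteps this by exploiting the freedom that Corollary~\ref{cor:codes} provides: since $\Delta_\Gamma$ depends only on the code, one may \emph{construct} from $\Gamma$ a convenient disk-band presentation in which the band $B_j$ passes through $D_i$ in opposite directions in the standard local picture of Figure~\ref{fig:simp}, so that the cancelling disk $E$ exists by construction; isotoping across $E$ removes the two intersections without changing the boundary knot, and the code of the resulting disk gives the desired $\Gamma'$. Your proof becomes correct if you replace ``fix a presentation realizing $\Gamma$'' by ``build a standard presentation realizing $\Gamma$ with an unknotted, unlinked finger,'' which is essentially the paper's proof. Your combinatorial fallback via Proposition~\ref{prop:block} (using $g_a(m)=g_b(m)$ for $m\neq a,b$ to split off a unimodular $2\times 2$ block) is a reasonable alternative route that avoids the geometric issue entirely, but as written it is only a sketch and would need the row/column reduction carried out and matched with the matrix of the reduced code.
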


\begin{proof}
Suppose $e_j$ is an edge of $\Gamma$ with consecutive markings labeled $i$ and $-i$.  We can use $\Gamma$ to construct a disk-band presentation $(D,B)$ for a ribbon disk $\D$ such that $\D$ gives rise to $\Gamma$, and such that the band $B_j$ corresponding to $e_j$ passes through the disk $D_i$ in opposite directions as shown at left in Figure~\ref{fig:simp}.  In particular, we arrange the construction so that there is an embedded disk $E$ whose boundary is the endpoint union of an arc in $B_j$ and an arc in $D_i$.  We can use the disk $E$ to build an isotopy of the band $B_j$ that removes the two opposite ribbon intersections, yielding a ribbon disk $\D'$ such that $\pd \D'$ is isotopic to $\pd \D$ and $r(\D') < r(\D)$.  Thus, the ribbon code $\Gamma'$ for $\D'$ satisfies $\Delta_{\Gamma'} = \Delta_{\Gamma}$ and $r(\Gamma') < r(\Gamma)$, as desired.
\end{proof}

\begin{figure}[h!]
  \centering
  \includegraphics[width=0.8\linewidth]{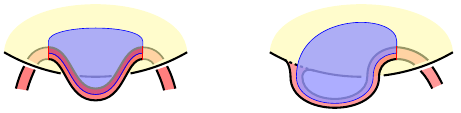}
  \caption{The local pictures described in Lemmas~\ref{lem:simp1b} and~\ref{lem:simp2}}
\label{fig:simp}
\end{figure}

\begin{lemma}\label{lem:simp2}
Suppose $\Gamma$ is a ribbon code such that an edge incident to a vertex $v_i$ has $\pm i$ as its marking closest to $v_i$.   Then there is a ribbon code $\Gamma'$ such that $\Delta_{\Gamma'} = \Delta_{\Gamma}$ and $r(\Gamma') < r(\Gamma)$.
\end{lemma}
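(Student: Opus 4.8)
The plan is to follow the same template as the proof of Lemma~\ref{lem:simp1b}: realize $\Gamma$ by a disk-band presentation in which the offending configuration is put in a standard local form, exhibit an embedded guiding disk, and use it to isotope away one ribbon intersection. Let $e_j$ be the edge incident to $v_i$ whose marking closest to $v_i$ is $\mu_{\ell}$, carrying a label $\pm i$, and let $v_{i'}$ (with $i'\neq i$, since $\Gamma$ is a tree) be its other endpoint. Using the flexibility discussed around Figure~\ref{fig:ex2} — the ribbon code is insensitive to homotopies of the bands supported away from neighborhoods of the disks, and in particular does not pin down the ambient embedding — I would first build a disk-band presentation $(D,B)$ for a ribbon disk $\D$ giving rise to $\Gamma$ in which the band $B_j$ corresponding to $e_j$ leaves $D_i$ along an attaching arc and, before meeting any disk of $D$, passes exactly once through $D_i$ to realize the ribbon intersection recorded by $\mu_{\ell}$, with this first stretch of $B_j$ arranged as a standard little flap as in Figure~\ref{fig:simp}. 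The two cases $\sgn(\mu_{\ell})=\pm 1$ differ only by a half-twist in this flap and are handled the same way. Because $\mu_{\ell}$ is the marking of $e_j$ closest to $v_i$, the subrectangle $R$ of $B_j$ running from the attaching arc on $D_i$ to this ribbon intersection contains no markings, so $\mathrm{int}(R)$ is disjoint from $D$.

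Next I would produce the guiding disk: in the standard local picture, $R$ together with the portion of $D_i$ it meets bounds an embedded disk $E\subset S^3$ whose boundary is the endpoint-union of an arc in $B_j$ and an arc in $D_i$, and — using the previous sentence — with $\mathrm{int}(E)$ disjoint from $\D$. Pushing $B_j$ across $E$ is a finger isotopy supported near $E$ (extending to an ambient isotopy of $S^3$ since $E$ meets $\D$ only along $\pd E$) that removes the ribbon intersection $\mu_{\ell}$ and creates no new intersection of $B_j$ with $D$. The outcome is a disk-band presentation with the same disks and underlying bands but with $B_j$ replaced by its isotoped copy, hence a ribbon disk $\D'$ with $\pd\D'$ isotopic to $\pd\D$ and with one fewer ribbon intersection; its ribbon code $\Gamma'$ is simply $\Gamma$ with the marking $\mu_{\ell}$ deleted. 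Therefore $r(\Gamma')=r(\Gamma)-1<r(\Gamma)$, and since $\pd\D'$ and $\pd\D$ represent the same knot, the fact that a ribbon code determines the Alexander polynomial of its boundary (Corollary~\ref{cor:codes}) gives $\Delta_{\Gamma'}=\Delta_{\Gamma}$.

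The main obstacle is making precise the construction of $E$ with interior disjoint from $\D$, equivalently, justifying that we may take the initial stretch of $B_j$ to be an unknotted, unlinked flap poking once through $D_i$. This is exactly where the hypothesis on the \emph{closest} marking is used: it forces $R$ to contain no markings, so no disk of $D$ lies in the way of the flap, and the band homotopies permitted by the ribbon code let us unknot and unlink $R$ in $S^3$ before carrying out the isotopy. Once the standard local model is in place, the verification that the isotopy is valid and the bookkeeping that $\Gamma'$ is $\Gamma$ minus one marking are routine.
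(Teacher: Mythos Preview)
Your proposal is correct and follows essentially the same approach as the paper's proof: both realize $\Gamma$ by a disk-band presentation in which the band $B_j$ leaves $D_i$ and immediately re-enters it in a standard flap, exhibit an embedded disk $E$ with $\pd E$ an arc in $B_j$ union an arc in $D_i$ and $E\cap\D=\pd E$, and isotope $B_j$ across $E$ to remove one ribbon intersection. Your write-up is slightly more explicit (separating the two sign cases, identifying the subrectangle $R$, and noting that $\Gamma'$ is $\Gamma$ with $\mu_\ell$ deleted), but the argument is the same.
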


\begin{proof}
Suppose $e_j$ is an edge of $\Gamma$ incident to a vertex $v_i$ and the marking in $e_j$ closest to $v_i$ is $\pm i$.  We can use $\Gamma$ to construct a disk-band presentation $(D,B)$ for a ribbon disk $\D$ such that $\D$ gives rise to $\Gamma$, and such that the band $B_j$ corresponding to $e_j$ is attached the disk $D_i$ corresponding to the vertex $v_i$ immediately passes through $D_i$ as shown at right in Figure~\ref{fig:simp}.  In particular, we arrange the construction so that there is an embedded disk $E$ whose boundary is the endpoint union of an arc in $B_j$ and arc in $D_i$ and such that $E \cap \D = \pd E$.  We can use the disk $E$ to build an isotopy of the band $B_j$ that removes the ribbon intersection, yielding a ribbon disk $\D'$ such that $\pd \D'$ is isotopic to $\pd \D$ and $r(\D') < r(\D)$.  As above, the corresponding ribbon code $\Gamma'$ for $\D'$ satisfies $\Delta_{\Gamma'} = \Delta_{\Gamma}$ and $r(\Gamma') < r(\Gamma)$.
\end{proof}

\begin{remark}
Note that Lemmas~\ref{lem:simp1b} and~\ref{lem:simp2} imply that there exist disks $\mathcal{D}$ associated to ribbon codes satisfying the hypotheses of the lemmas such that $\mathcal{D}$ can be simplified, but this is not necessarily the case \emph{for all} disks corresponding to these ribbon codes.  Indeed, the Kinoshita-Terasaka knot $11n_{42}$ shown in Figure~\ref{fig:KT} (adapted from Figure 2.7 of~\cite{CKS}) bounds a disk $\D$ satisfying the hypotheses of Lemma~\ref{lem:simp2}; nevertheless $r(11n_{42}) = 3$ by~\cite{mizuma} and so the ribbon number of this particular $\D$ cannot be reduced, even though the corresponding ribbon code can be simplified.
\end{remark}

\begin{figure}[h!]
  \centering
  \includegraphics[width=0.6\linewidth]{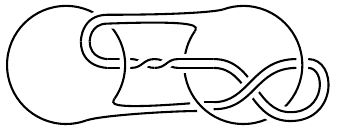}
  \caption{A disk-band presentation for the Kinoshita-Terasaka knot $11n_{42}$}
\label{fig:KT}
\end{figure}

\begin{lemma}\label{lem:simp3}
Suppose $\Gamma$ is a ribbon code containing a vertex $v_i$ of valence one such that $\Gamma$ contains no marking labeled $\pm i$.  Then there is a ribbon code $\Gamma'$ such that $\Delta_{\Gamma'}(t) = \Delta_{\Gamma}(t)$, $\F(\Gamma') < \F(\Gamma)$, and $r(\Gamma') \leq r(\Gamma)$.
\end{lemma}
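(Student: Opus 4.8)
The plan is to delete the leaf disk together with its unique band, and to argue that this operation changes neither the boundary knot (up to isotopy) nor therefore the Alexander polynomial, while it decreases the fusion number by one and the ribbon number by a nonnegative amount.

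First I would fix an oriented disk-band presentation $(D,B)$ realizing $\Gamma$, with disks $D_1,\dots,D_n$ and bands $B_1,\dots,B_{n-1}$, arranged (after reindexing) so that the valence-one vertex of the hypothesis is $v_n$, with associated disk $D_n$ and unique band $B_{n-1}$, the latter attached also to a disk $D_{i'}$ along an arc $\gamma\subset\pd D_{i'}$. Let $D_{k_1},\dots,D_{k_\ell}$ be the disks through which $B_{n-1}$ passes, so that the markings on the edge $e_{n-1}$ are labeled $\pm k_1,\dots,\pm k_\ell$. The hypothesis that $\Gamma$ has no marking labeled $\pm n$ says that $k_m\neq n$ for all $m$ and that no band passes through $D_n$; combined with the valence-one hypothesis and the fact that bands meet only the interiors of disks, this pins down $B_{n-1}\cap\D$ to consist of $\gamma$, the attaching arc in $\pd D_n$, and the $\ell$ ribbon arcs in $\operatorname{int}(D_{k_1}),\dots,\operatorname{int}(D_{k_\ell})$, and it makes $D_n$ disjoint from $\D\setminus(B_{n-1}\cup D_n)$. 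I would then set $\D'=\big(\bigcup\{D_1,\dots,D_{n-1}\}\big)\cup\big(\bigcup\{B_1,\dots,B_{n-2}\}\big)$; since removing the leaf $v_n$ and the edge $e_{n-1}$ from the tree $\Gamma$ leaves a tree, this is a disk-band presentation for a ribbon disk $\D'$ whose ribbon code $\Gamma'$ is obtained from $\Gamma$ by deleting $v_n$, the edge $e_{n-1}$, and the $\ell$ markings on $e_{n-1}$. This gives $\F(\Gamma')=\F(\Gamma)-1$ and $r(\Gamma')=r(\Gamma)-\ell\leq r(\Gamma)$ at once.

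The heart of the argument is to show $\pd\D'\simeq\pd\D$, since then Corollary~\ref{cor:codes} yields $\Delta_{\Gamma'}=\Delta_{\pd\D'}=\Delta_{\pd\D}=\Delta_\Gamma$ and the lemma follows. For this I would look at the embedded disk $P=B_{n-1}\cup D_n\subset\D$, whose boundary is $\gamma\cup\gamma'$ with $\gamma'\subset\pd\D$ (running along one free edge of $B_{n-1}$, across the free arc of $D_n$, and back along the other free edge). One has $\pd\D=(\pd\D'\setminus\gamma)\cup\gamma'$ and $\pd\D'=(\pd\D'\setminus\gamma)\cup\gamma$, so it suffices to sweep $\gamma'$ across $P$ to $\gamma$ without ever meeting $\pd\D'\setminus\gamma$. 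By the description of $B_{n-1}\cap\D$ above and the disjointness of $D_n$, the disk $P$ meets $\D'$ only in $\gamma$ together with arcs lying in the interiors of $D_{k_1},\dots,D_{k_\ell}$; since $\pd\D'$ avoids those interiors, $P\cap\pd\D'=\gamma$, and the sweep of $\gamma'$ across $P$ to $\gamma$ is the desired isotopy of $\pd\D$ onto $\pd\D'$.

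I expect the last step to be the delicate one: one must be sure that $P$ meets the remaining ribbon disk only in disk interiors — which lie off the boundary knot — and never along a boundary arc or along a band, so that pushing $\gamma'$ across $P$ is an honest isotopy of $\pd\D$ rather than a move that changes the knot; this is precisely where both clauses of the hypothesis on $v_n$ are used. The remaining bookkeeping — that $\Gamma'$ really is a legitimate ribbon code (reindex the surviving vertices and carry the surviving labels, all of which are $\neq\pm n$, along) — is routine.
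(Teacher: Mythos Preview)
Your proof is correct and follows essentially the same approach as the paper: delete the leaf disk $D_n$ and its band $B_{n-1}$, observe that the embedded disk $P=B_{n-1}\cup D_n$ meets the remaining ribbon disk only along the attaching arc $\gamma$ and in disk interiors (using both the valence-one hypothesis and the absence of markings $\pm n$), and isotope $\partial\D$ across $P$ to $\partial\D'$. Your treatment of the isotopy---checking explicitly that $P\cap\partial\D'=\gamma$ so the sweep avoids the rest of the boundary knot---is slightly more detailed than the paper's, but the argument is the same.
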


\begin{proof}
After relabeling if necessary, suppose that $e_{n-1}$ is the only edge incident to $v_n$ in $\Gamma$, and that $\Gamma$ contains no marking labeled $\pm n$.  As above, we can use $\Gamma$ to construct a disk-band presentation $(D,B)$ for a ribbon disk $\D$ giving rise to $\Gamma$, where $D = \{D_1,\dots,D_n\}$, $B=\{B_1,\dots,B_{n-1}\}$, $D_n$ corresponds to $v_n$, and $B_{n-1}$ corresponds to $e_{n-1}$.  Let $D' = \{D_1,\dots,D_{n-1}\}$ and $B' = \{B_1,\dots,B_{n-2}\}$.  Then $(D',B')$ is a disk-band presentation for a ribbon disk $\D'$.  In addition, since $\Gamma$ contains no marking labeled $\pm n$, it follows that no band in $B$ meets the disk $D_n$ in its interior.  Thus, $B_{n-1} \cup D_n$ is an embedded disk which is disjoint from the bands in $B'$, and we can construct an isotopy of $\pd \D$ across this disk which pushes one arc of $\pd(B_{n-1} \cup D_n)$ to the arc of $\pd (\bigcup D')$ along which the other end of $B_{n-1}$ is attached. It follows that $\pd \D'$ and $\pd \D$ are isotopic knots.

In addition, $\D'$ has $k \geq 0$ fewer ribbon intersections than $\D$, where $k$ is the number of markings contained on the edge $e_{n-1}$.  Hence the ribbon code $\Gamma'$ for $\D'$ satisfies $\Delta_{\Gamma'} = \Delta_{\Gamma}$, $\F(\Gamma') < \F(\Gamma)$, and $r(\Gamma') \leq r(\Gamma)$, as desired.
\end{proof}

\begin{lemma}\label{lem:simp4}
Suppose $\Gamma$ is a ribbon code containing a vertex $v_i$ of valence two such that $\Gamma$ contains no marking labeled $\pm i$.  Then there is a ribbon code $\Gamma'$ such that $\Delta_{\Gamma'} = \Delta_{\Gamma}$, $\F(\Gamma') < \F(\Gamma)$, and $r(\Gamma') = r(\Gamma)$.
\end{lemma}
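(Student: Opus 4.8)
The plan is to imitate the proof of Lemma~\ref{lem:simp3}, but to replace the ``delete a leaf disk together with its band (and the markings on that edge)'' move with an ``absorb a valence-two disk into a single merged band'' move, which keeps every marking. First I would realize $\Gamma$ by an oriented disk-band presentation $(D,B)$ for a ribbon disk $\D$, with $D=\{D_1,\dots,D_n\}$ and $B=\{B_1,\dots,B_{n-1}\}$, and then reindex so that the valence-two vertex is $v_n$, the two edges at $v_n$ are $e_{n-2}$ and $e_{n-1}$, and the corresponding bands are $B_{n-2}$, joining $D_a$ to $D_n$, and $B_{n-1}$, joining $D_n$ to $D_b$; since $\Gamma$ is a tree one has $a\neq b$ and $a,b\leq n-1$. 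The hypothesis that no marking of $\Gamma$ is labeled $\pm n$ says precisely that no band of $B$ passes through the interior of $D_n$, and since $v_n$ has valence two the only bands meeting $D_n$ at all are $B_{n-2}$ and $B_{n-1}$.

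The core step is then to set $R:=B_{n-2}\cup D_n\cup B_{n-1}$ and to observe that, after rounding corners, $R$ is an embedded rectangle: its two attaching arcs are an end arc of $B_{n-2}$ on $\pd D_a$ and an end arc of $B_{n-1}$ on $\pd D_b$, each of its two free arcs is a concatenation of a side arc of $B_{n-2}$, an arc of $\pd D_n$, and a side arc of $B_{n-1}$, it is disjoint from $B_1,\dots,B_{n-3}$, and it meets the interior of each $D_k$ ($k\leq n-1$) in exactly the ribbon arcs along which $B_{n-2}$ and $B_{n-1}$ did. Setting $D'=\{D_1,\dots,D_{n-1}\}$ and $B'=\{B_1,\dots,B_{n-3},R\}$, the pair $(D',B')$ is an oriented disk-band presentation whose union in $S^3$ is exactly $\D$; thus it presents the same ribbon disk, its boundary is the knot $\pd\D$, its number of ribbon intersections is unchanged, and it uses one fewer band. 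Its ribbon code $\Gamma'$ is obtained from $\Gamma$ by suppressing the valence-two vertex $v_n$ (fusing $e_{n-2}$ and $e_{n-1}$ into one edge) and carrying all of the markings across with labels and local directions unchanged, since suppressing a valence-two vertex does not alter which component of the complement of a marking contains any given vertex, and no marking points at $v_n$. Hence $\F(\Gamma')=n-2<n-1=\F(\Gamma)$, $r(\Gamma')=r(\Gamma)$, and $\Delta_{\Gamma'}=\Delta_{\pd\D}=\Delta_{\Gamma}$; the last equality can also be read off directly from Proposition~\ref{prop:block} and Corollary~\ref{cor:codes}, since $\Gamma$ and $\Gamma'$ carry the same marking data.

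The only point that will require genuine care is the verification that $R=B_{n-2}\cup D_n\cup B_{n-1}$ is truly an embedded rectangle meeting $D'$ as claimed, rather than producing a spurious self-intersection or an intersection with one of the remaining disks; this is exactly where both hypotheses---valence two and the absence of a $\pm n$ marking---get used, and it is the geometric heart of the argument. Everything after that is bookkeeping, and the invariance of the Alexander polynomial is immediate once one notes that the underlying subset of $S^3$ has not changed at all.
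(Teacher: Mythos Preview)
Your proposal is correct and follows essentially the same approach as the paper: realize $\Gamma$ by a disk-band presentation, merge $B_{n-2}\cup D_n\cup B_{n-1}$ into a single band (the paper calls it $B'_{n-2}$, you call it $R$), and observe that the resulting $(D',B')$ presents the very same ribbon disk $\D$, whence $\Gamma'$ has the required properties. Your write-up is somewhat more detailed than the paper's---you spell out why $R$ is an embedded rectangle and describe $\Gamma'$ combinatorially as the suppression of $v_n$---but the argument is the same.
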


\begin{proof}
After relabeling if necessary, suppose $e_{n-2}$ and $e_{n-1}$ are the only edges incident to the vertex $v_n$, and no edge in $\Gamma$ contains a marking labeled $\pm n$.  Use $\Gamma$ to construct a disk-band presentation $(D,B)$ for a ribbon disk $\D$ giving rise to $\Gamma$, where $D = \{D_1,\dots,D_n\}$ and $B=\{B_1,\dots,B_{n-1}\}$, with indices corresponding as above.  Since the marking $\pm n$ does not appear in $\Gamma$, none of the bands in $B$ meet $D_n$ in its interior.  Thus, if we let $B'_{n-2} = B_{n-2} \cup D_n \cup B_{n-1}$, we have that $B'_{n-2}$ is disjoint from the other bands in $B$ and shares two boundary arcs with arcs in $\pd (\bigcup D)$, so we can view $B'_{n-2}$ as a new band.  Letting $D' = \{D_1,\dots,D_{n-1}\}$ and $B' = \{B_1,\dots,B_{n-3},B'_{n-2}\}$, we have that $(D',B')$ is another disk-band presentation for the same ribbon disk $\D$.  Therefore, the corresponding ribbon code $\Gamma'$ satisfies $\Delta_{\Gamma'} = \Delta_{\Gamma}$, $\F(\Gamma') < \F(\Gamma)$, and $r(\Gamma') = r(\Gamma)$.
\end{proof}

\begin{remark}
The conclusions of Lemmas~\ref{lem:simp3} and \ref{lem:simp4} do \emph{not} hold for vertices of $\Gamma$ of valence three or greater.  For example, the ribbon code $\Gamma$ shown in Figure~\ref{fig:ex2} has no marking labeled $\pm 4$ corresponding to the vertex $v_4$ of valence three, but $\Gamma$ admits no obvious simplification.  This is not relevant to what follows, but we note that in this case, we can make a new ribbon code $\Gamma'$ such that $\F(\Gamma') < \F(\Gamma)$ but such that $r(\Gamma') > r(\Gamma)$, similar to the process shown in Figure~\ref{fig:fusion}.
\end{remark}

The next lemma also helps to cut down the number of cases we consider in our analysis.

\begin{lemma}\label{lem:simp5}
Suppose $\Gamma$ and $\Gamma'$ are ribbon codes with isomorphic underlying graphs but such that every label of $\Gamma'$ is opposite that of $\Gamma$.  Then $\Delta_{\Gamma} = \Delta_{\Gamma'}$.
\end{lemma}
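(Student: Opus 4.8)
The plan is to apply Proposition~\ref{prop:block} to both ribbon codes and compare the resulting Seifert matrices. Since $\Gamma$ and $\Gamma'$ have isomorphic underlying graphs, we may assume they have the same vertex set $\{v_1,\dots,v_n\}$ and that corresponding markings $\mu_{\ell}$ and $\mu'_{\ell}$ sit in the same position on the same edge; the only difference is that $\sgn(\mu'_{\ell}) = -\sgn(\mu_{\ell})$ for every $\ell$. First I would observe that the local directions at the markings are unchanged, since they are determined by the graph structure alone. Hence the paths $\gamma_{\ell}$ are the same, and comparing the two marking functions via their defining cases, $g'_{\ell}(m) = -g_{\ell}(m)$ for all $\ell \neq m$.

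Next I would track how this sign reversal propagates to the blocks $X, Y$ of the block Seifert matrix. By rule (1) of Proposition~\ref{prop:block}, the off-diagonal entries satisfy $x'_{m\ell} = y'_{\ell m} = g'_{\ell}(m) = -g_{\ell}(m) = -x_{m\ell}$, and by rules (2) and (3), flipping $\sgn(\mu_{\ell})$ from $+1$ to $-1$ swaps $(x_{\ell\ell}, y_{\ell\ell}) = (0,-1)$ to $(x'_{\ell\ell}, y'_{\ell\ell}) = (1,0)$, and vice versa. So the diagonal entries are not simply negated, but I can instead compute with the combinations that actually enter $\Delta$. As in the proof of Corollary~\ref{cor:codes}, we have $\Delta_{\Gamma}(t) = \det((X - tY^T)(Y - tX^T))$, and similarly for $\Gamma'$. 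I would show that $X' - t(Y')^T = -(X - tY^T) + (1-t)\,\mathrm{Id}_r$ is \emph{not} quite the cleanest route; the better move is to use $Y^T = X - \mathrm{Id}_r$ (the last line of Proposition~\ref{prop:block}) to rewrite everything in terms of $X$ alone, namely $\Delta_{\Gamma}(t) = \det\big((X - t(X-\mathrm{Id}_r))(Y - tX^T)\big)$, and track the effect of the transformation $X \mapsto X'$.

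The cleanest version: from the case analysis above, the off-diagonal part of $X$ negates and the diagonal entry $x_{\ell\ell}$ changes from $0$ to $1$ or $1$ to $0$, i.e. $x'_{\ell\ell} = 1 - x_{\ell\ell}$. Writing $X = X_0 + E$ where $E$ is the diagonal matrix of the $x_{\ell\ell}$'s and $X_0$ has zero diagonal, we get $X' = -X_0 + (\mathrm{Id}_r - E)$. I expect the main obstacle to be bookkeeping: verifying that with $Y^T = X - \mathrm{Id}_r$ substituted, the matrix $X' - t(Y')^T$ equals $-(X - tY^T)^{\mathrm{conjugate}}$ up to a congruence (conjugation by a $\pm 1$ diagonal matrix, or the substitution $t \leftrightarrow t^{-1}$ together with multiplication by a unit), so that the determinant changes only by a unit in $\Lambda = \Z[t^{\pm 1}]$ — which is all that is needed since the Alexander polynomial is defined only up to units $\pm t^k$. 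Concretely I would show $X' - t(Y')^T = t\big(X^T - tY\big)$-type relation, or exhibit an explicit invertible $P$ (likely $P = \mathrm{Id}_r$ or a signed permutation) with $X' - t(Y')^T = P\,(X - tY^T)^T\,P^{-1}$ up to the $t \mapsto t^{-1}$ symmetry; combined with the analogous statement for the other factor and the fact that $\Delta_K(t) \doteq \Delta_K(t^{-1})$, this forces $\Delta_{\Gamma'} \doteq \Delta_{\Gamma}$, completing the proof. Alternatively, and perhaps most efficiently, I would note that reversing all signs can be realized geometrically by reversing the orientation of the ribbon disk $\D$ (equivalently, reflecting the ambient $S^3$), which reverses every normal direction simultaneously and hence flips every marking label, while the boundary knot $\pd\D$ changes only to its reverse mirror — whose Alexander polynomial coincides with that of $\pd\D$. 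I would present this geometric argument as the primary proof and relegate the matrix computation to a remark, as it is both shorter and more conceptual.
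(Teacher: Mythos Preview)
Your geometric argument at the end is correct and is essentially the paper's proof: the paper simply observes that from $\Gamma$ and $\Gamma'$ one can construct ribbon disks $\D$ and $\D'$ with $\partial\D' = \overline{\partial\D}$, and mirror images share the same Alexander polynomial. One small correction: reversing the orientation of $\D$ and reflecting the ambient $S^3$ are \emph{not} equivalent operations, though either one flips every marking sign; indeed, merely reversing the orientation of $\D$ leaves $\partial\D$ literally unchanged, which is an even more direct route than invoking mirrors. The extended matrix computation you sketch first is left incomplete and is unnecessary; since you yourself conclude that the geometric argument should be the primary proof, your proposal ultimately lands on the same short argument the paper gives.
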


\begin{proof}
In this case, we can use $\Gamma$ and $\Gamma'$ to construct ribbon disks $\D$ and $\D'$, respectively, such that $\pd \D' = \overline{\pd \D}$.  As mirror images have identical Alexander polynomials, the statement follows immediately.
\end{proof}

Guided by the hypotheses of the above lemmas, we call a ribbon code $\Gamma$ \emph{irreducible} if

\begin{enumerate}
\item Every edge of $\Gamma$ contains at least one marking,
\item No edge contains consecutive markings labeled $i$ and $-i$,
\item No marking closest to a vertex has the same label as that vertex, and
\item Every vertex of valence one or two appears at least once as the label of some marking.
\end{enumerate}

Otherwise $\Gamma$ is called \emph{reducible}.

\begin{proposition}\label{prop:irred}
For any reducible ribbon code $\Gamma$, there exists an irreducible ribbon code~$\Gamma'$ such that $r(\Gamma') \leq r(\Gamma)$ and $\Delta_{\Gamma'} = \Delta_{\Gamma}$.
\end{proposition}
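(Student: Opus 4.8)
The plan is a straightforward well-ordering argument built on the simplification lemmas already established. To each ribbon code $\Gamma$ we assign the complexity pair
\[ c(\Gamma) = (r(\Gamma), \F(\Gamma)) \in \Z_{\geq 0} \times \Z_{\geq 0},\]
ordered lexicographically. The key observation is that whenever $\Gamma$ is reducible, some one of Lemmas~\ref{lem:simp1a}, \ref{lem:simp1b}, \ref{lem:simp2}, \ref{lem:simp3}, \ref{lem:simp4} applies and produces a ribbon code $\Gamma'$ with $\Delta_{\Gamma'} = \Delta_{\Gamma}$, with $r(\Gamma') \leq r(\Gamma)$, and with $c(\Gamma') < c(\Gamma)$ in the lexicographic order.

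First I would verify the case analysis. If condition (1) of irreducibility fails, Lemma~\ref{lem:simp1a} gives $r(\Gamma') = r(\Gamma)$ and $\F(\Gamma') < \F(\Gamma)$, hence $c(\Gamma') < c(\Gamma)$. If condition (2) fails, Lemma~\ref{lem:simp1b} gives $r(\Gamma') < r(\Gamma)$, so $c$ drops regardless of what happens to $\F$. If condition (3) fails, Lemma~\ref{lem:simp2} likewise gives $r(\Gamma') < r(\Gamma)$. Finally, if condition (4) fails there is a vertex $v_i$ of valence one or two with no marking labeled $\pm i$; Lemma~\ref{lem:simp3} (valence one) or Lemma~\ref{lem:simp4} (valence two) then gives $\F(\Gamma') < \F(\Gamma)$ together with $r(\Gamma') \leq r(\Gamma)$, so again $c(\Gamma') < c(\Gamma)$. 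In every case $\Delta_{\Gamma'} = \Delta_{\Gamma}$ and $r(\Gamma')\le r(\Gamma)$, as the cited lemmas state.

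Then I would iterate: set $\Gamma_0 = \Gamma$, and as long as $\Gamma_k$ is reducible, let $\Gamma_{k+1}$ be the output of the applicable lemma. This produces a strictly decreasing sequence $c(\Gamma_0) > c(\Gamma_1) > \cdots$ in the lexicographic order on $\Z_{\geq 0}\times\Z_{\geq 0}$, which is well-founded, so the sequence is finite; its last term $\Gamma' = \Gamma_N$ is necessarily irreducible, since otherwise another step would apply. Chaining the equalities $\Delta_{\Gamma_{k+1}} = \Delta_{\Gamma_k}$ and the inequalities $r(\Gamma_{k+1}) \leq r(\Gamma_k)$ along the sequence yields $\Delta_{\Gamma'} = \Delta_{\Gamma}$ and $r(\Gamma') \leq r(\Gamma)$, completing the proof.

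There is no genuine geometric obstacle here — all the substantive content lives in Lemmas~\ref{lem:simp1a}--\ref{lem:simp4} — so the only point requiring care is the bookkeeping in the case analysis, and in particular checking that Lemma~\ref{lem:simp3}'s possible decrease of $r$ causes no trouble (it only helps, while the guaranteed decrease of $\F$ handles the subcase $r(\Gamma') = r(\Gamma)$). One should also note explicitly that ``reducible'' is precisely the negation of the conjunction of conditions (1)--(4), so a reducible code always fails at least one condition and hence always admits one of the moves above; this is what guarantees the iteration can terminate only at an irreducible code.
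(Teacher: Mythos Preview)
Your proposal is correct and follows the same approach as the paper: invoke whichever of Lemmas~\ref{lem:simp1a}--\ref{lem:simp4} applies to a reducible code. The paper's proof is a single sentence and leaves the iteration and termination implicit, whereas you spell out the lexicographic complexity $(r,\F)$ and the well-foundedness argument explicitly; your version is more complete but not materially different.
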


\begin{proof}
If $\Gamma$ is reducible, we may apply Lemma~\ref{lem:simp1a}, \ref{lem:simp1b}, \ref{lem:simp2}, \ref{lem:simp3}, or \ref{lem:simp4} to produce the desired~$\Gamma'$.
\end{proof}

We have one final lemma that will aid in our search.

\begin{lemma}\label{lem:size}
If $\Gamma$ is an irreducible ribbon code, then
 \[\F(\Gamma) \leq r(\Gamma).\]
 \end{lemma}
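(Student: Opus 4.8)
The plan is to read the inequality off directly from the definition of irreducibility — in fact, only condition (1) is needed. Recall that $\F(\Gamma)$ is defined to be the number of edges of the tree $\Gamma$, while $r(\Gamma)$ is the number of markings $\mu_1,\dots,\mu_r$. By the definition of a ribbon code, each marking is a distinguished point lying in the \emph{interior} of some edge, and the interiors of distinct edges are disjoint; hence assigning to each marking the (unique) edge whose interior contains it gives a well-defined function from the set of markings to the set of edges.

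First I would invoke condition (1) in the definition of an irreducible ribbon code: every edge of $\Gamma$ contains at least one marking. This says precisely that the assignment above is surjective onto the edge set. Consequently the number of markings is at least the number of edges, i.e. $r(\Gamma) \geq \F(\Gamma)$, which is the claim.

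There is essentially no obstacle; the substance has already been absorbed into the setup — in particular into Proposition~\ref{prop:irred}, which guarantees that reducing to irreducible codes costs nothing in terms of Alexander polynomials or ribbon number. The only points meriting a sentence of care are: (i) citing exactly the right irreducibility condition, namely (1), rather than one of the other three; and (ii) noting explicitly that a marking lies in the interior of a single edge, so that markings on different edges are never identified and the count $r(\Gamma) = \sum_{e}(\#\text{ markings on } e)$ genuinely dominates $\F(\Gamma) = \sum_e 1$.
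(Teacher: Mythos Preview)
Your proof is correct and is essentially the paper's argument: the paper phrases it contrapositively (if $\F(\Gamma) > r(\Gamma)$ then by pigeonhole some edge has no marking, violating condition~(1)), while you phrase it directly as a surjection from markings to edges, but the content is identical.
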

 
 \begin{proof}
 If $\F(\Gamma) > r(\Gamma)$, then $\Gamma$ contains more edges than markings, so at least one edge does not contain a marking, and $\Gamma$ is reducible.
\end{proof}

\subsection{Enumerating possible Alexander polynomials for $r=2$ and $r=3$}

For the remainder, we set the convention that (unless otherwise specified) our ribbon disks are oriented with the normal direction pointing out of the paper, as in Figures~\ref{fig:ex1} and~\ref{fig:ex2}, and so we omit the red normal vector in the figures below.  For any nonnegative integer $r$, define $\R_r$ to be the set of all possible Alexander polynomials of knots $K$ such that $r(K) \leq r$.  By definition, we have $\R_0 \subset \R_1 \subset \R_2 \subset \dots$, and by Theorem~\ref{thm:main1}, we know $\R_r$ is finite.  Additionally, $\R_0 =\R_1= \{1\}$ (see Remark~\ref{rmk:unknot}).  

\begin{proposition}\label{prop:r2}
$\R_2 = \{1, \Delta_{3_1 \# \overline{3_1}}, \Delta_{6_1}\}$.
\end{proposition}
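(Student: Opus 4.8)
The plan is to enumerate all irreducible ribbon codes $\Gamma$ with $r(\Gamma) \leq 2$, compute $\Delta_{\Gamma}$ for each via Corollary~\ref{cor:codes}, and collect the resulting polynomials; Proposition~\ref{prop:irred} guarantees this captures every Alexander polynomial of a ribbon knot with ribbon number at most $2$. The case $r(\Gamma) \leq 1$ is already handled ($\R_0 = \R_1 = \{1\}$ by Remark~\ref{rmk:unknot}), so the real content is $r(\Gamma) = 2$. By Lemma~\ref{lem:size}, an irreducible $\Gamma$ with $r(\Gamma) = 2$ has $\F(\Gamma) \leq 2$, so the underlying tree has $n \in \{1,2,3\}$ vertices. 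First I would dispose of $n = 1$: a single vertex has no edges, hence no markings, so irreducibility condition (1) forces $r(\Gamma) = 0$, a contradiction. This leaves $n = 2$ (a single edge carrying two markings) and $n = 3$ (a path $v_1 - v_2 - v_3$ with the two markings distributed on the edges, one per edge by condition (1) when $\F = 2$, or both on one edge when we instead take $\F = 1$, i.e.\ back to the $n = 2$ case).

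Next I would carry out the enumeration for $n = 2$. The edge $e$ joins $v_1$ and $v_2$ and carries two markings $\mu_1, \mu_2$, each labeled $\pm 1$ or $\pm 2$; each marking also has a local direction pointing toward the vertex named by its label. Using Lemma~\ref{lem:simp5} (global sign reversal preserves $\Delta_{\Gamma}$) I may assume the marking nearest $v_1$ has label $+1$ or $+2$; condition (3) forbids label $\pm 1$ on the marking nearest $v_1$ and $\pm 2$ on the marking nearest $v_2$; and condition (2) forbids the two markings from being $\{i, -i\}$ consecutively. After these reductions only a handful of labeled codes survive. For each survivor I build the $2 \times 2$ blocks $X, Y$ from Proposition~\ref{prop:block} (using the marking functions $g_1, g_2$, which here just record whether $\mu_2$ lies on the path from $v_1$ to $\mu_1$ and vice versa, together with the diagonal rules tied to $\sgn(\mu_\ell)$), set $Z = 0$ for concreteness (the remark after Proposition~\ref{prop:block} shows $Z$ does not affect $\Delta_{\Gamma}$ — indeed Corollary~\ref{cor:codes} shows $\Delta_\Gamma = \det((X - tY^T)(Y - tX^T))$ depends only on $X, Y$), and compute $\Delta_{\Gamma}(t) = \det\big((X - tY^T)(Y - tX^T)\big)$. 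I expect the outputs to be exactly $1$, $\Delta_{3_1 \# \overline{3_1}} = (t - 1 + t^{-1})^2$ up to units, and $\Delta_{6_1} = (2t - 3 + 2t^{-1})$-type polynomials, matching the sample ribbon codes for $3_1 \# \overline{3_1}$ and $6_1$ drawn in Figure~\ref{fig:ex1}.

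Then I would handle $n = 3$: the tree is the path $v_1 - v_2 - v_3$ (the only tree on three vertices), with one marking on each edge by condition (1). Conditions (2)--(4) apply: in particular condition (4) forces every vertex — all of which have valence $\leq 2$ — to appear as some marking label, but there are only two markings and three vertices, so no irreducible code on three vertices with only two markings exists. Hence $n = 3$ contributes nothing new, and the enumeration is complete with the list $\{1, \Delta_{3_1 \# \overline{3_1}}, \Delta_{6_1}\}$; since the square knot and $6_1$ visibly have ribbon number $2$ (they bound the disks of Figure~\ref{fig:ex1}, and are nontrivial so $r \geq 2$ by Remark~\ref{rmk:unknot}), all three polynomials are realized, giving equality. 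The main obstacle is the bookkeeping in the $n = 2$ case: correctly tracking local directions and signs so that the diagonal entries of $X$ and $Y$ and the off-diagonal marking-function values are assigned without error, and making sure the symmetry reductions from Lemmas~\ref{lem:simp5} and the irreducibility conditions genuinely cut the label choices down to a short finite list rather than silently dropping a case that yields a new polynomial.
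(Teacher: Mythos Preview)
Your proposal is correct and follows essentially the same route as the paper: reduce to irreducible codes, use Lemma~\ref{lem:size} to bound $\F(\Gamma)\leq 2$, rule out the three-vertex case by the pigeonhole argument on condition~(4), and then reduce the two-vertex case to exactly two labeled codes via conditions~(2)--(3) and Lemma~\ref{lem:simp5}. The only cosmetic differences are that the paper normalizes by fixing $x_2=+1$ (yielding $x_1=\pm 2$) rather than fixing the first label positive, and it identifies the two surviving codes directly with the disk-band presentations in Figure~\ref{fig:ex1} rather than computing $\det((X-tY^T)(Y-tX^T))$ explicitly; also note your anticipated $\Delta_{6_1}$ should be $2t^{-1}-5+2t$, not $2t^{-1}-3+2t$.
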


\begin{proof}
Suppose that $\Gamma$ satisfies $r(\Gamma) = 2$.  By Proposition~\ref{prop:irred}, we can suppose without loss of generality that $\Gamma$ is irreducible, in which case Lemma~\ref{lem:size} implies that $\F(\Gamma) \leq 2$.  If $\F(\Gamma) = 2$, then $\Gamma$ has three vertices of valence one or two but only two markings, so that at least one of these vertices does not appear as a label and $\Gamma$ is reducible.  It follows that $\F(\Gamma) = 1$, so that $\Gamma$ has a two vertices $v_1$ and $v_2$ and a single edge $e_1$ from $v_1$ to $v_2$.  Additionally, $e_1$ contains two marking $\mu_1$ and $\mu_2$, in order.  Since $\Gamma$ is irreducible, it follows that the corresponding markings satisfy $x_1 = \pm 2$ and $x_2 = \pm 1$.  By Lemma~\ref{lem:simp5}, we may suppose without loss of generality that $x_2 = 1$, leaving us with two possible ribbon codes $\Gamma_1$ (in which $x_1 = 2$) and $\Gamma_2$ (in which $x_1 = -2)$.  At right in Figure~\ref{fig:ex1}, we can see the ribbon code $\Gamma_1$, which gives rise to the knot $6_1$, and at left in Figure~\ref{fig:ex1}, we see $\Gamma_2$, yielding $3_1 \# \overline{3_1}$.
\end{proof}


\begin{proposition}\label{prop:r3}
$\R_3 =  \{1, \Delta_{3_1 \# \overline{3_1}}, \Delta_{6_1}, \Delta_{8_8}, \Delta_{8_9}, \Delta_{9_{27}}, \Delta_{9_{41}}, \Delta_{10_{137}}, \Delta_{10_{153}}, \Delta_{11n_{116}}\}$.
\end{proposition}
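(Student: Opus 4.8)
The plan is to repeat the enumeration strategy used for Proposition~\ref{prop:r2}, but now for ribbon codes $\Gamma$ with $r(\Gamma) = 3$. By Proposition~\ref{prop:irred} it suffices to enumerate irreducible ribbon codes with three markings, and by Lemma~\ref{lem:size} any such code satisfies $\F(\Gamma) \leq 3$, so the underlying tree has at most four vertices. First I would organize the search by the pair $(\F(\Gamma), r(\Gamma)) = (\F, 3)$ with $\F \in \{1,2,3\}$, and for each value of $\F$ list the possible tree shapes: for $\F=1$ the single edge, for $\F=2$ the path on three vertices, and for $\F=3$ either the path on four vertices or the star $K_{1,3}$. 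For each tree I would record how the three markings $\mu_1,\mu_2,\mu_3$ can be distributed among the edges and in what linear order along each edge.

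Next I would apply the irreducibility constraints (1)--(4) to prune this list aggressively. Condition (4) forces every valence-one or valence-two vertex to appear as a label of some marking; with only three markings available, this severely restricts which trees can occur — for instance the $\F=3$ path has four vertices all of valence one or two, so all four labels $\pm 1,\pm 2,\pm 3,\pm 4$ would need to appear among only three markings, which is impossible, eliminating that case immediately. A similar count handles or restricts the star and the three-vertex path. Condition (3) rules out a marking adjacent to a vertex carrying that vertex's own label, and condition (2) forbids consecutive $i,-i$ on an edge. After this pruning, and after using Lemma~\ref{lem:simp5} to fix the sign of one marking (say the outermost one is $+$), I expect only a manageable finite list of candidate ribbon codes to remain. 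For each surviving code I would write down the $X$ and $Y$ blocks via Proposition~\ref{prop:block}, form $f(t) = \det(X - tY^T)$, and compute $\Delta_\Gamma(t) = f(t) f(t^{-1})$ using the determinant formula from the proof of Corollary~\ref{cor:codes}. Finally I would match each resulting polynomial against the Alexander polynomials in the knot table to identify the representative knot, obtaining the list $\{1, \Delta_{3_1 \# \overline{3_1}}, \Delta_{6_1}, \Delta_{8_8}, \Delta_{8_9}, \Delta_{9_{27}}, \Delta_{9_{41}}, \Delta_{10_{137}}, \Delta_{10_{153}}, \Delta_{11n_{116}}\}$; the $r=2$ polynomials reappear here since $\R_2 \subseteq \R_3$.

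The main obstacle I anticipate is bookkeeping: ensuring the enumeration of marking placements and sign choices is genuinely exhaustive, and that no two superficially different codes are secretly isomorphic (so that the final list of polynomials is neither over- nor under-counted). In particular I would need to be careful about the cyclic/linear ordering of markings along an edge, about which end of an edge is "closest" to which vertex when applying condition (3), and about the interplay of the global sign flip from Lemma~\ref{lem:simp5} with the per-marking signs. I would also want to double-check that each candidate polynomial $f(t)f(t^{-1})$ that survives is actually realized by an honest ribbon knot (not merely by a ribbon code), which follows since every ribbon code can be realized by some disk-band presentation and hence some ribbon knot, so realizability is automatic; the only real work is the careful case analysis and the determinant computations, neither of which is deep but both of which must be done with care.
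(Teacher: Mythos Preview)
Your plan is correct and follows essentially the same route as the paper: reduce to irreducible codes via Proposition~\ref{prop:irred}, bound $\F(\Gamma)\le 3$ via Lemma~\ref{lem:size}, split into the cases $\F=1,2,3$, use condition~(4) together with a vertex count to rule out the four-vertex path and force the star $K_{1,3}$ when $\F=3$, and then use Lemma~\ref{lem:simp5} and graph isomorphisms to cut the remaining sign choices down to a short list. The only cosmetic difference is that the paper identifies each surviving code by exhibiting an explicit disk-band presentation and naming the boundary knot, whereas you propose to compute $\det(X-tY^T)$ directly from Proposition~\ref{prop:block}; either route yields the same list of polynomials.
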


\begin{proof}
Suppose $\Gamma$ satisfies $r(\Gamma)=3$.  By Proposition~\ref{prop:irred}, we can suppose without loss of generality that $\Gamma$ is irreducible, in which case Lemma~\ref{lem:size} implies that $\F(\Gamma) \leq 3$, so that $\Gamma$ has at most four vertices.  We consider the cases $\F(\Gamma) = 1$, $\F(\Gamma) =2$, and $\F(\Gamma) = 3$ separately.

First, suppose $\F(\Gamma) = 1$, so that $\Gamma$ has two vertices $v_1$ and $v_2$, a single edge $e_1$ from $v_1$ to $v_2$, and three markings $\mu_1,\mu_2,\mu_3$ in order with labels $x_1,x_2,x_3$, respectively.  Since $\Gamma$ is irreducible, we have $x_1 = \pm 2$ and $x_3 = \pm 1$, and by Lemma~\ref{lem:simp5}, we may suppose that $x_3 = 1$.  Again using irreducibility, the possible markings are $(x_1,x_2,x_3) = (\pm 2, 1,1)$ or $(x_1,x_2,x_3) = (\pm 2, \pm 2, 1)$, but using a graph isomorphism, the marking $(x_1,x_2,x_3) = (-2,-2,1)$ yields the same ribbon code as the marking $(-2, 1, 1)$.  Thus, we need only consider the cases $(x_1,x_2,x_3) = (\pm 2, 1, 1)$.  In Figure~\ref{fig:f1}, we see two ribbon disks which yield these two codes, and their boundaries are $11n_{116}$ (with $r=0$) and $10_{153}$, respectively.

\begin{figure}[h!]
  \centering
  \includegraphics[width=0.8\linewidth]{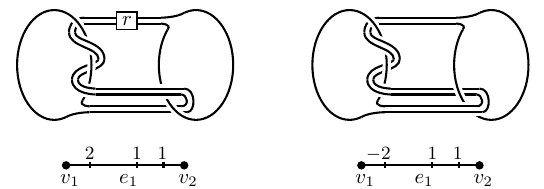}
  \caption{Disk-band presentations and corresponding ribbon codes for $\D$ with $\pd \D = 11n_{116}$ (left, with $r=0$) and $10_{153}$ (right).}
\label{fig:f1}
\end{figure}

Next, suppose $\F(\Gamma)=2$, so that $\Gamma$ has three vertices $v_1$, $v_2$, and $v_3$ and two edges $e_1$ (from $v_1$ to $v_2$) and $e_2$ (from $v_2$ to $v_3$).  After applying an isomorphism if necessary, we may assume that $e_1$ contains two markings $\mu_1$ and $\mu_2$ (in order) and $e_2$ contains a single marking $\mu_3$.  Moreover, since $\Gamma$ is irreducible, the corresponding labels must satisfy $(x_1,x_2,x_3) = (\pm 2, \pm 3, \pm 1)$, since each marking is the closest marking to one or two of the vertices, and each label must appear exactly once.  By Lemma~\ref{lem:simp5}, we may assume that $x_3 = 1$, and so the four possible labelings are $(x_1,x_2,x_3) = (\pm 2, \pm 3, 1)$.  As shown in Figure~\ref{fig:f2}, these ribbon codes are induced by ribbon disks whose boundaries are the knots $8_9$ (with $r=0$), $8_8$, $10_{137}$, and $10_{129}$, where $\Delta_{8_8} = \Delta_{10_{129}}$.

\begin{figure}[h!]
  \centering
  \includegraphics[width=0.8\linewidth]{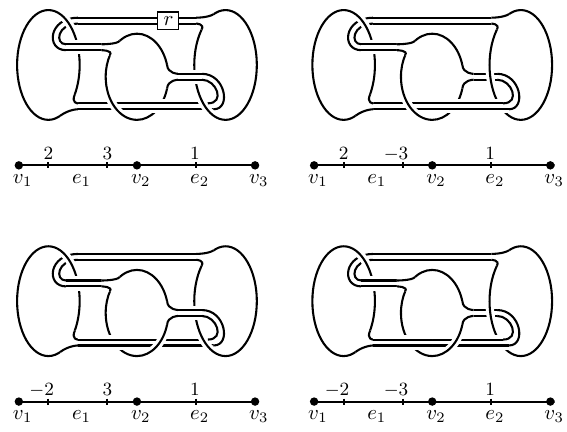}
  \caption{Disk-band presentations and corresponding ribbon codes for $\D$ with $\pd \D = 8_9$ (top left, with $r=0$), $8_8$ (top right), $10_{137}$ (bottom left), and $10_{129}$ (bottom right).}
\label{fig:f2}
\end{figure}

Finally, suppose $\F(\Gamma) = 3$, so that $\Gamma$ has four vertices $v_1,v_2,v_3$, and $v_4$.  Since some vertex, say $v_4$, does not appear as a marking, it must be the case that the valence of $v_4$ is at least three.  It follows that the valence of $v_4$ is exactly three, and the other three vertices have valence one.  Let $e_1$, $e_2$, and $e_3$ be the edges of $\Gamma$, where $e_i$ connects $v_i$ and $v_4$.  Each edge $e_i$ must contain one marking $\mu_i$ labeled $x_i$, where the label $x_i$ is not equal to $\pm i$ by irreducibility.  Up to isomorphism, the only possible labelings are $(x_1,x_2,x_3) = (\pm 3, \pm 1, \pm 2)$.  Up to isomorphism and using Lemma~\ref{lem:simp5}, there are only two possibilities:  Either all signs agree, so $(x_1,x_2,x_3) = (3,1,2)$, or one sign differs from the other two, so $(x_1,x_2,x_3) = (3,1,-2)$.  The two cases are shown at left and right in Figure~\ref{fig:f3}, in which the ribbon disks have boundary $9_{41}$ (with $r=0$) and $9_{27}$ (with $r=s=t=0$), respectively.  This exhausts all possibilities for $\Gamma$, completing the proof.

\begin{figure}[h!]
  \centering
  \includegraphics[width=0.8\linewidth]{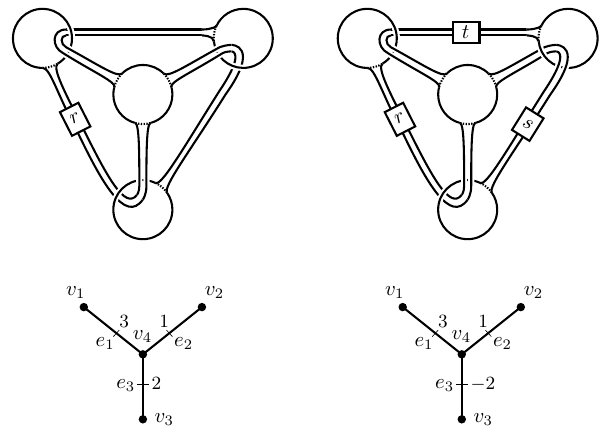}
  \caption{Disk-band presentations and corresponding ribbon codes for $\D$ with $\pd \D = 9_{41}$ (left, with $r=0$) and $9_{27}$ (right, with $r=s=t=0$). 
  }\label{fig:f3}
\end{figure}

\end{proof}

\section{Tabulating ribbon numbers up to 11 crossings}\label{sec:tab}

In this section, we tabulate the ribbon numbers for ribbon knots in the knot table up to 11 crossings.  Tables~\ref{table1} and~\ref{table2} include the knot name, Alexander polynomial, genus, determinant, ribbon number, and justification for the upper and lower bounds for the ribbon numbers.  Except for the ribbon numbers, the data in the tables was retrieved from the KnotInfo database~\cite{knotinfo}.  For succinctness, we list $\Delta_K$ as a tuple of coefficients, and we omit coefficients implied by the symmetry of $\Delta_k$.  For example, $\Delta_{6_1}(t) = 2t^{-1} - 5 + 2t$ is written as $(2,-5)$ in the table.

One additional justification we need comes from a proposition of Mizuma and Tsutsumi and uses the \emph{crosscap number} $\gamma(K)$ of $K$, the minimum nonorientable genus of a nonorientable surface bounded by $K$.  

\begin{proposition}\cite{MizTsu}\label{prop:MT}
If $K$ is a ribbon knot such that $r(K)=2$, then either $g(K) = 1$ or $\gamma(K) \leq 2$.
\end{proposition}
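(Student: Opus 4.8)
The plan is to leverage the classification of ribbon codes with $r(\Gamma) = 2$ obtained in the proof of Proposition~\ref{prop:r2}, rather than reprove everything from scratch. Since $r(K) = 2$, the knot $K$ bounds a ribbon disk $\D$ with ribbon code $\Gamma$ satisfying $r(\Gamma) = 2$; and by Proposition~\ref{prop:irred} we may pass to an irreducible $\Gamma'$ with $r(\Gamma') \leq 2$ and $\Delta_{\Gamma'} = \Delta_\Gamma$, but the subtlety is that reducing the code may change the underlying disk, and $g(K)$ and $\gamma(K)$ are genuine invariants of $K$, not of a particular disk. So instead I would argue directly from the disk $\D$ with exactly two ribbon intersections. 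Smoothing both ribbon intersections as in Lemma~\ref{lem:genus} yields an orientable Seifert surface $F$ of genus $2$ for $K$, and from Proposition~\ref{prop:block} we have an explicit $4 \times 4$ Seifert matrix $A = \begin{bmatrix} 0 & X \\ Y & Z \end{bmatrix}$ with $X - Y^T = \mathrm{Id}_2$ and the $X, Y$ blocks dictated by the (irreducible-reduced) ribbon code.

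The heart of the argument is then a case analysis: if $g(K) = 1$ we are done, so assume $g(K) = 2$ (note $g(K) \le r(K) = 2$ by Lemma~\ref{lem:genus}), and we must produce a nonorientable spanning surface of first Betti number $\leq 2$. The natural source is to perform a nonorientable smoothing at one of the two ribbon intersections while doing the orientable smoothing at the other: a ribbon intersection can be resolved in two ways, one producing a tube (orientable, genus $+1$) and one producing a half-twisted band / crosscap (nonorientable). Resolving one ribbon intersection orientably and the other nonorientably should produce a once-punctured surface bounded by $K$ whose nonorientable genus is controlled — specifically a surface built from the disk by attaching one orientable handle and one crosscap, giving $\gamma \le 3$ a priori, which is not yet good enough. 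To get $\gamma(K) \le 2$ one needs to observe that after an orientable smoothing of one intersection the resulting surface has a natural compression: because $g(K) = 2$ forces the ribbon code to be one of the two explicit codes $\Gamma_1$ (boundary $6_1$) or $\Gamma_2$ (boundary $3_1 \# \overline{3_1}$) — wait, but those have genus $1$, so in fact $g(K) = 2$ together with $r(K) = 2$ can only happen when the disk cannot be reduced to an irreducible code of smaller ribbon number while keeping $g$; this is exactly the tension flagged in the remark after Lemma~\ref{lem:simp2}. I would instead cite Mizuma--Tsutsumi's original argument structure: analyze the two possible local models at the ribbon intersections on the level of the disk-band presentation with one band ($\F(\D) = 1$ by Remark~\ref{rmk:unknot}), so $\D = D_1 \cup D_2 \cup B$ with $B$ a single band meeting $D_1 \cup D_2$ in two ribbon arcs, and build the nonorientable surface by surgering the band.

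The main obstacle will be the genus-$2$ bookkeeping: verifying that exactly one nonorientable band surgery, combined with the single orientable smoothing, yields first Betti number $2$ rather than $3$, which requires exhibiting an explicit compressing disk or an isotopy collapsing the resulting surface. I expect this to come down to the observation that when $g(K) = 2$, the band $B$ links $D_1$ and $D_2$ in a way that, after one half-twisted surgery, the ``obvious'' genus-contributing curve bounds in the complement — essentially because the fusion number is $1$, so the disk part of the surface is connected and planar, and a planar surface with two bands attached has a spanning structure with at most two independent cycles. Making this precise via the explicit $X, Y$ blocks from Proposition~\ref{prop:block} and the Goeritz-form computation of $\gamma$ from a nonorientable spanning surface is the routine-but-careful part I would delegate to a short lemma; I would not grind through it here.
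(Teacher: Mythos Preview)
The paper does not prove this proposition; it is quoted from Mizuma--Tsutsumi \cite{MizTsu} and used as a black box in the tabulation of Section~\ref{sec:tab}. So there is no in-paper proof to compare against.

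Evaluating your sketch on its own terms, there is one confusion and one genuine gap. The confusion is your aside that the two irreducible codes $\Gamma_1, \Gamma_2$ have ``boundary $6_1$'' and ``boundary $3_1\#\overline{3_1}$'' and that ``those have genus $1$.'' First, $g(3_1\#\overline{3_1})=2$. Second, and more importantly, a ribbon code determines only $\Delta_K$ (Corollary~\ref{cor:codes}), not the knot and certainly not its genus: $8_{20}$ and $10_{140}$ both have $r(K)=2$ and $g(K)=2$ (see Table~\ref{table1}), so the case you are trying to handle is genuinely populated. The code classification of Proposition~\ref{prop:r2} therefore cannot carry the argument by itself.

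The gap is the step from $\gamma(K)\le 3$ to $\gamma(K)\le 2$. You have not actually defined the ``nonorientable smoothing'' of a ribbon singularity, and even granting that such a local move exists, you correctly compute that one handle plus one crosscap yields only $\gamma\le 3$. Your proposed fix---produce a compressing disk---is the entire content of the proposition, not a routine lemma to be delegated. Whether such a compression exists depends on how the band sits in $S^3$, not merely on the $X,Y$ blocks of the Seifert matrix, so the combinatorics of Proposition~\ref{prop:block} will not suffice; a geometric argument with the actual disk-band presentation in $S^3$ is required. That is precisely what Mizuma--Tsutsumi supply, and it is not reproduced in this paper.
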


For the upper bounds, we typically use a known disk-band presentation or a known symmetric union presentation in conjunction with Lemma~\ref{lem:underpass}.  For instance, in the template at left in Figure~\ref{fig:f1}, the box labeled $r$ represents some number of full twists on two strands, and when $r=-1$, the corresponding knot is $11n_{49}$. In the template at top left in Figure~\ref{fig:f2}, the box labeled $r$ also represents full twists and when $r=1$, the corresponding knot is $11n_{37}$.  In the templates in Figure~\ref{fig:f3}, boxes labeled $r$, $s$, and $t$ also represent full twists.  When $r=-1$ in the template at left, the corresponding knot is $11n_{83}$.  In the template at right, when $(r,s,t) = (-1,0,0)$, $(0,-1,0)$, or $(0,0,1)$, the corresponding knots are $11n_{21}$, $11n_4$, or $11n_{172}$, respectively.

In Tables~\ref{table1} and~\ref{table2}, the justification~\cite{lamm1} refers to the symmetric union presentations depicted in Figure 16 of that paper.  The justification~\cite{lamm2} refers to Figure 3 (for the knot $10_{87}$) and Figure 4 (for the other knots in the table).  The justification~\cite{lamm3} refers to Table 2 (for the knots $11n_{67}$, $11n_{73}$, $11n_{74}$, and $11n_{97}$), Table 3 (for the knots $10_{99}$, $11a_{58}$, $11a_{103}$, $11a_{165}$, $11a_{201}$, $3_1 \# 8_{10}$, $3_1 \# 8_{11}$), or the Appendix (for the other knots in the table).  For the knots for which we have not determined the exact ribbon number, we give multiple possibilities.

For the lower bound on the ribbon number of $K \in  \{11n_{42},11n_{67},11n_{74},11n_{97}\}$, we used KnotInfo~\cite{knotinfo} to verify that $\gamma(K) > 2$, and so Proposition~\ref{prop:MT} can be applied (note, however, that we cited~\cite{mizuma} for $11n_{42}$, since historically that result appeared before Proposition~\ref{prop:MT}).  The upper bounds for $10_{42}$, $10_{75}$, $10_{87}$, and $11n_{39}$ use the ribbon disks shown in Figure~\ref{fig:extra}, and the upper bounds for $11a_{28}$, $11a_{35}$, $11a_{36}$, $11a_{87}$, $11a_{96}$, $11a_{115}$, $11a_{169}$, and $11a_{316}$ are shown in Figure~\ref{fig:extra2}.

\begin{table}[ht]
\centering
\caption{Ribbon number data and justifications for knots up to 10 crossings}
\label{table1}
\begin{tabular}{ccccccc}
\hline
$K$ & $\Delta_K$ & $\det(K)$ & $g(K)$ & $r(K)$ & lower & upper  \\
\hline \hline
$0_1$ & (1) & 1 & 0 & 0 & & \\
\hline
$6_1$ & $(2,-5)$ & 9 & 1 & 2 & Rmk.~\ref{rmk:unknot} & Fig.~\ref{fig:f1} \\
\hline
$3_1 \# \overline{3_1}$ & $(1,-2,3)$ & 9 & 2 & 2 & Prop.~\ref{prop:gensq} & Prop.~\ref{prop:gensq} \\
\hline
$8_8$ & $(2,-6,9)$ & 25 & 2 & 3 & Prop.~\ref{prop:r2} & Fig.~\ref{fig:f2} \\
\hline
$8_9$ & $(1,-3,5,-7)$ & 25 & 2 & 3 & Prop.~\ref{prop:r2} & Fig.~\ref{fig:f2} \\
\hline
$8_{20}$ & $(1,-2,3)$ & 9 & 2 & 2 & Rmk.~\ref{rmk:unknot} & \cite{lamm1} \\
\hline
$4_1 \# \overline{4_1}$ & $(1,-6,11)$ & 25 & 2 & 3 & Prop.~\ref{prop:r2} & Lem.~\ref{lem:underpass} \\
\hline
$9_{27}$ & $(1,-5,11,-15)$ & 49 & 3 & 3 & Lem.~\ref{lem:genus} & Fig.~\ref{fig:f3} \\
\hline
$9_{41}$ & $(3,-12,19)$ & 49 & 2 & 3 & Prop.~\ref{prop:r2} & Fig.~\ref{fig:ex2} \\
\hline
$9_{46}$ & $(2,-5)$ & 9 & 1 & 2 & Rmk.~\ref{rmk:unknot} & \cite{lamm1} \\
\hline
$10_{3}$ & $(6,-13)$ & 25 & 1 & 4 & Prop.~\ref{prop:r3} & \cite{lamm1} \\
\hline
$10_{22}$ & $(2,-6,10,-13)$ & 49 & 3 & 4 & Prop.~\ref{prop:r3} & \cite{lamm1} \\
\hline
$10_{35}$ & $(2,-12,21)$ & 49 & 2 & 4 & Prop.~\ref{prop:r3} & \cite{lamm1} \\
\hline
$10_{42}$ & $(1,-7,19,-27)$ & 81 & 3 & 4 & Prop.~\ref{prop:r3} & Fig.~\ref{fig:extra} \\
\hline
$10_{48}$ & $(1,-3,6,-9,11)$ & 49 & 4 & 4 & Lem.~\ref{lem:genus} & \cite{lamm1} \\
\hline
$10_{75}$ & $(1,-7,19,-27)$ & 81& 3 & 4 & Prop.~\ref{prop:r3} & Fig.~\ref{fig:extra} \\
\hline
$10_{87}$ & $(2,-9,18,-23)$ & 81& 3 & 4 & Prop.~\ref{prop:r3} & Fig.~\ref{fig:extra} \\
\hline
$10_{99}$ & $(1,-4,10,-16,19)$ & 81& 4 & 4 & Lem.~\ref{lem:genus} & \cite{lamm3} \\
\hline
$10_{123}$ & $(1,-6,15,-24,29)$ & 121& 4 & 4, 5 & Lem.~\ref{lem:genus} & \cite{lamm1} \\
\hline
$10_{129}$ & $(2,-6,9)$ & 25 & 2 & 3 & Prop.~\ref{prop:r2} & Fig.~\ref{fig:f2} \\
\hline
$10_{137}$ & $(1,-6,11)$ & 25 & 2 & 3 & Prop.~\ref{prop:r2} & Fig.~\ref{fig:f2} \\
\hline
$10_{140}$ & $(1,-2,3)$ & 9 & 2 & 2 & Rmk.~\ref{rmk:unknot} & \cite{lamm1} \\
\hline
$10_{153}$ & $(1, -1, 1, -3)$ & 1 & 3 & 3 & Lem.~\ref{lem:genus} & Fig.~\ref{fig:f1} \\
\hline
$10_{155}$ & $(1, -3, 5, -7)$ & 25 & 3 & 3 & Lem.~\ref{lem:genus} & \cite{lamm1} \\
\hline
$5_1 \# \overline{5_1}$ & $(1,-2,3,-4,5)$ & 25 & 4 & 4 & Prop.~\ref{prop:gensq} & Prop.~\ref{prop:gensq} \\
\hline
$5_2 \# \overline{5_2}$ & $(4,-12,17)$ & 49 & 2 & 4 & Prop.~\ref{prop:r3} & Lem.~\ref{lem:underpass} \\
\hline
\end{tabular}
\end{table}

\begin{table}[ht]
\centering
\caption{Ribbon number data and justifications for knots with 11 crossings}
\label{table2}
\begin{tabular}{ccccccc}
\hline
$K$ & $\Delta_K$ & $\det(K)$ & $g(K)$ & $r(K)$ & lower & upper  \\
\hline \hline
$11a_{28}$ & $(1,-6,15,-24,29)$ & 121 & 4 & 4 & Lem.~\ref{lem:genus} & Fig.~\ref{fig:extra2} \\
\hline
$11a_{35}$ & $(1,-5,14,-25,31)$ & 121 & 4 & 4 & Lem.~\ref{lem:genus} & Fig.~\ref{fig:extra2} \\
\hline
$11a_{36}$ & $(2,-12,28,-37)$ & 121 & 3 & 4 & Prop.~\ref{prop:r3} & Fig.~\ref{fig:extra2} \\
\hline
$11a_{58}$ & $(2,-9,18,-23)$ & 81 & 3 & 4 & Prop.~\ref{prop:r3} & \cite{lamm3} \\
\hline
$11a_{87}$ & $(2, -11, 28, -39)$ & 121 & 3 & 4 & Prop.~\ref{prop:r3} & Fig.~\ref{fig:extra2}  \\
\hline
$11a_{96}$ & $(1, -9, 29, -43)$ & 121 & 3 & 4 & Prop.~\ref{prop:r3} &Fig.~\ref{fig:extra2} \\
\hline
$11a_{103}$ & $(4, -20, 33)$ & 81 & 2 & 4 & Prop.~\ref{prop:r3} & \cite{lamm3} \\
\hline
$11a_{115}$ & $(3, -13, 27, -35)$ & 121 & 3 & 4 & Prop.~\ref{prop:r3} & Fig.~\ref{fig:extra2}  \\
\hline
$11a_{164}$ & $(1, -7, 20, -35, 43)$ & 169 & 4 & 4, 5, 6 & Lem.~\ref{lem:genus} & \cite{lamm3} \\
\hline
$11a_{165}$ & $(2, -9, 18, -23)$ & 81 & 3 & 4 & Prop.~\ref{prop:r3}  & \cite{lamm3} \\
\hline
$11a_{169}$ & $(2, -12, 28, -37)$ & 121 & 3 & 4 & Prop.~\ref{prop:r3} & Fig.~\ref{fig:extra2} \\
\hline
$11a_{201}$ & $(4, -20, 33)$ & 81 & 2 & 4 & Prop.~\ref{prop:r3} & \cite{lamm3} \\
\hline
$11a_{316}$ & $(1, -5, 14, -25, 31)$ & 121 & 4 & 4 & Lem.~\ref{lem:genus} & Fig.~\ref{fig:extra2} \\
\hline
$11a_{326}$ & $(1, -6, 19, -36, 45)$ & 169 & 4 & 4, 5, 6 & Lem.~\ref{lem:genus} & \cite{lamm3} \\
\hline
$11n_{4}$ & $(1, -5, 11, -15)$ & 49 & 3 & 3 & Lem.~\ref{lem:genus} & Fig.~\ref{fig:f3} \\
\hline
$11n_{21}$ & $(1, -5, 11, -15)$ & 49 & 3 & 3 & Lem.~\ref{lem:genus} & Fig.~\ref{fig:f3} \\
\hline
$11n_{37}$ & $(1, -3, 5, -7)$ & 25 & 3 & 3 & Lem.~\ref{lem:genus} & Fig.~\ref{fig:f2} \\
\hline
$11n_{39}$ & $(2, -6, 9)$ & 25 & 2 & 3 & Prop.~\ref{prop:r2} & Fig.~\ref{fig:extra} \\
\hline
$11n_{42}$ & $(1)$ & 1 & 2 & 3 & \cite{mizuma} & \cite{mizuma} \\
\hline
$11n_{49}$ & $(1,0,-3)$ & 1 & 2 & 3 & Prop.~\ref{prop:r2} & Fig~\ref{fig:f1} \\
\hline
$11n_{50}$ & $(2,-6,9)$ & 25 & 2 & 3 & Prop.~\ref{prop:r2} & \cite{lamm3} \\
\hline
$11n_{67}$ & $(2,-5)$ & 9 & 2 & 3 & Prop.~\ref{prop:MT} & \cite{lamm3} \\
\hline
$11n_{73}$ & $(1,-2,3)$ & 9 & 3 & 3 & Lem.~\ref{lem:genus} & \cite{lamm3} \\
\hline
$11n_{74}$ & $(1,-2,3)$ & 9 & 2 & 3 & Prop.~\ref{prop:MT} & \cite{lamm3} \\
\hline
$11n_{83}$ & $(3,-12,19)$ & 49 & 2 & 3 & Prop.~\ref{prop:r2} & Fig.~\ref{fig:f3} \\
\hline
$11n_{97}$ & $(2,-5)$ & 9 & 2 & 3 & Prop.~\ref{prop:MT} & \cite{lamm3} \\
\hline
$11n_{116}$ & $(1,0,-3)$ & 1 & 2 & 3 & Prop.~\ref{prop:r2} & Fig.~\ref{fig:f1} \\
\hline
$11n_{132}$ & $(2,-6,9)$ & 25 & 2 & 3 & Prop.~\ref{prop:r2} & \cite{lamm3} \\
\hline
$11n_{139}$ & $(2,-5)$ & 9 & 1 & 2 & Rmk.~\ref{rmk:unknot} & \cite{lamm3} \\
\hline
$11n_{172}$ & $(1,-5,11,-15)$ & 49 & 3 & 3 & Lem.~\ref{lem:genus} & Fig.~\ref{fig:f3}  \\
\hline
$3_1 \# 8_{10}$ & $(1,-4,10,-16,19)$ & 81 & 4 & 4 & Lem.~\ref{lem:genus} & \cite{lamm3} \\
\hline
$3_1 \# 8_{11}$ & $(2,-9,18,-23)$ & 81 & 3 & 4 & Prop.~\ref{prop:r3} & \cite{lamm3} \\
\hline
\end{tabular}
\end{table}

\begin{figure}[h!]
 \begin{subfigure}{.48\textwidth}
  \centering
  \includegraphics[width=.83\linewidth]{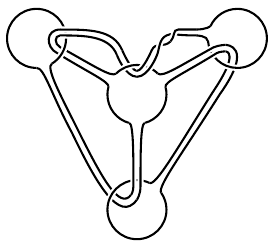}
  \caption{$10_{42}$}
  \label{fig:braid1}
\end{subfigure}
 \begin{subfigure}{.48\textwidth}
  \centering
\includegraphics[width=0.83\linewidth]{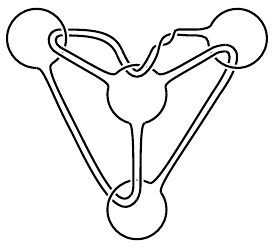}
  \caption{$10_{75}$}
  \label{fig:braid1}
\end{subfigure}
 \begin{subfigure}{.48\textwidth}
  \centering
    \includegraphics[width=0.83\linewidth]{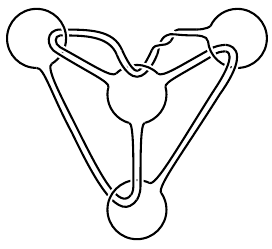}
  \caption{$10_{87}$}
  \label{fig:braid1}
\end{subfigure}
 \begin{subfigure}{.48\textwidth}
  \centering
    \includegraphics[width=0.83\linewidth]{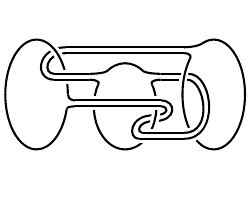}
  \caption{$11n_{39}$}
  \label{fig:braid1}
\end{subfigure}
  \caption{Ribbon disks realizing minimal ribbon numbers, where (A), (B), and (C) are adapted from Figure 3 of~\cite{KSTI}; see Remark~\ref{rmk:KSTI}.}
\label{fig:extra}
\end{figure}

\begin{remark}\label{rmk:KSTI}
The three ribbon disks in Figure~\ref{fig:extra} are adapted from Figure 3 of~\cite{KSTI}; however, we note that figure has an error in the diagrams for $10_{42}$ and $10_{75}$.  The corrected versions were graciously shared with us by the authors via an email exchange, and we used these to construct the presentations in Figure~\ref{fig:extra}.
\end{remark}

\begin{figure}[h!]
  \centering
   \begin{subfigure}{.48\textwidth}
  \centering
    \includegraphics[width=0.83\linewidth]{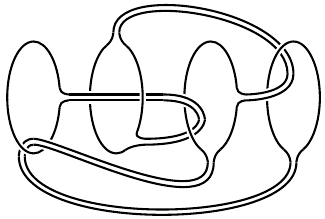} \qquad
  \caption{$11a_{28}$}
  \label{fig:braid1}
\end{subfigure}
 \begin{subfigure}{.48\textwidth}
  \centering
\includegraphics[width=0.83\linewidth]{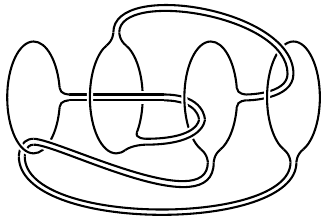} \\
  \caption{$11a_{35}$}
  \label{fig:braid1}
\end{subfigure}
 \begin{subfigure}{.48\textwidth}
  \centering
    \includegraphics[width=0.83\linewidth]{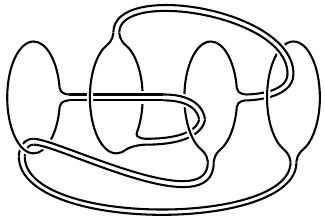} \qquad
  \caption{$11a_{36}$}
  \label{fig:braid1}
\end{subfigure}
 \begin{subfigure}{.48\textwidth}
  \centering
\includegraphics[width=0.83\linewidth]{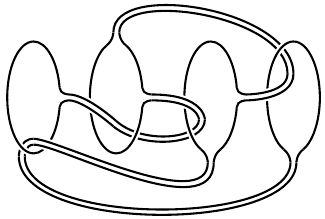} \\
  \caption{$11a_{87}$}
  \label{fig:braid1}
\end{subfigure}
 \begin{subfigure}{.48\textwidth}
  \centering
    \includegraphics[width=0.83\linewidth]{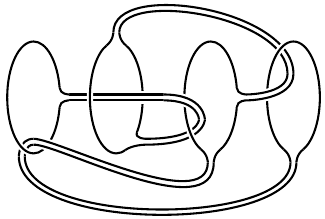} \qquad
  \caption{$11a_{96}$}
  \label{fig:braid1}
\end{subfigure}
 \begin{subfigure}{.48\textwidth}
  \centering
\includegraphics[width=0.83\linewidth]{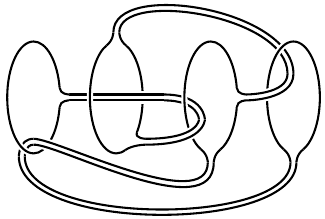} \\
  \caption{$11a_{115}$}
  \label{fig:braid1}
\end{subfigure}
 \begin{subfigure}{.48\textwidth}
  \centering
    \includegraphics[width=0.83\linewidth]{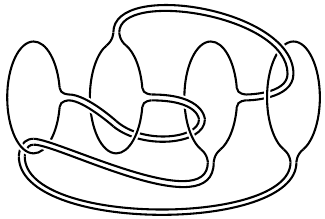} \qquad
  \caption{$11a_{169}$}
  \label{fig:braid1}
\end{subfigure}
 \begin{subfigure}{.48\textwidth}
  \centering
\includegraphics[width=0.83\linewidth]{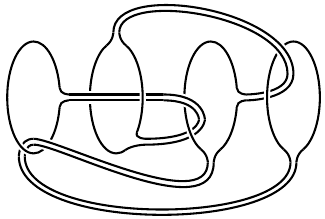}
  \caption{$11a_{316}$}
  \label{fig:braid1}
\end{subfigure}
  \caption{Ribbon disks realizing minimal ribbon numbers}
\label{fig:extra2}
\end{figure}

\section{Jones determinants and ribbon numbers}\label{sec:jones}

A natural question arising from the work above is the following.

\begin{question}\label{q:q1}
To what extent do these results hold for ribbon links?
\end{question}

In the penultimate section, we prove an extension of Corollary~\ref{cor:main1} to links, stated above as Theorem~\ref{thm:main3}, which replaces the knot determinant with a link invariant called the \emph{Jones determinant} introduced by Eisermann in~\cite{eisermann}.

A \emph{ribbon surface} for a link $L \subset S^3$ is a possibly disconnected, possibly nonorientable, immersed surface $\Ss \subset S^3$ such that
\begin{enumerate}
\item $\pd \Ss = L$,
\item $\Ss$ has only ribbon self-intersections, and
\item $\Ss$ has no closed components.
\end{enumerate}
As with a ribbon disk $\D$, we define the \emph{ribbon number} $r(\Ss)$ to be the total number of ribbon intersections contained in $\Ss$.  An \emph{$n$-component ribbon link} $L$ bounds a collection $\D$ of $n$ ribbon disks (which can intersect each other in ribbon singularities).  The \emph{ribbon number} $r(L)$ of $L$ is the minimum of $r(\D)$ taken over all sets of ribbon disks $\D$ bounded by~$L$.

For a diagram $D$ corresponding to a link $L$, we let $\langle D \rangle$ denote the well-known \emph{Kauffman bracket polynomial} in the variable $A$, in which case the Jones polynomial $V_L(q)$ satisfies
\begin{equation}\label{eq:jones}
V_L(q) = (-A)^{-3w(D)} \langle D \rangle
\end{equation}
where $w(D)$ is the \emph{writhe} of $D$, and using the substitution $q = -A^{-2}$ (alternatively, $V_L(t)$ uses the substitution $t = q^2$ or $t = A^{-4}$).  Evaluating $V_L$ at $q=i$ (or $A = e^{-i\pi /4}$), we have
\[ \det(L) = |V_L(i)| = |(e^{\pi i/4})^{-3w(D)} \langle D \rangle_{A = e^{-i\pi/4}}| = |\langle D \rangle_{A = e^{-i\pi/4}}|.\]
Eisermann proved the following.

\begin{theorem}\label{thm:eis}\cite{eisermann}
Suppose $L$ bounds a ribbon surface $\Ss$ such that $\chi(\Ss) = n>0$.  Then $V_L(q)$ is divisible by the Jones polynomial of the $n$-component unlink, $(q+q^{-1})^{n-1}$.
\end{theorem}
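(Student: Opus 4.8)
The plan is to follow the Kauffman-bracket approach due to Eisermann. The first step is a reduction. Since $V_L(q) = (-A)^{-3w(D)}\langle D\rangle$ and $(-A)^{-3w(D)}$ is a unit of $\Z[A^{\pm 1}]$, and since under $q = -A^{-2}$ the factor $q+q^{-1}$ becomes exactly the loop value $\delta := -A^2 - A^{-2}$, the assertion $(q+q^{-1})^{n-1}\mid V_L(q)$ is equivalent to $\delta^{\,n-1}\mid\langle D\rangle$ in $\Z[A^{\pm1}]$. As $\delta = -A^{-2}(A^4+1)$ is a unit times the eighth cyclotomic polynomial, which is irreducible over $\Q$ and primitive, $\delta$ is prime, and so $\delta^{\,n-1}\mid\langle D\rangle$ is in turn equivalent to $\langle D\rangle$ vanishing to order at least $n-1$ at the evaluation point $A = e^{-i\pi/4}$ (the same point that computes $\det(L)$), where $\delta$ has a simple zero.

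The second step is to choose a diagram adapted to a ribbon surface. Being an $n$-component ribbon link, $L$ bounds a collection $\D$ of $n$ ribbon disks; passing to disk-band presentations of these exhibits $L$ as the result of performing $b = d - n$ band moves on the $d$-component unlink $U_d$, namely the boundary of the $d$ disks. Each of these moves must merge two components (the component count drops from $d$ to $n$ in exactly $d-n$ steps), so $L$ is a fusion of an unlink. I would fix a diagram $D$ of $L$ displaying the $d$ disk-circles together with the $b$ bands, with each band's twisting and each ribbon intersection (a band passing through a disk) confined to a small region; all crossings of $D$ then come from band twists, from bands crossing one another or crossing disk-circles, and from the ribbon intersections.

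The heart of the argument is to reconstruct $L$ from $U_d$ by reattaching the $b$ bands one at a time and to control what each reattachment does to the bracket. I would set this up in the Temperley--Lieb skein module: fix, for each band, a small ball $B$ meeting the diagram in a trivial two-strand tangle (the ``slot'' where the band attaches), so that reattaching the band amounts to gluing into $B$ a fixed decorated tangle — a saddle carrying the band's twists and its piercings of various disk-circles — and $\langle D\rangle$ changes by the corresponding linear substitution in the skein module. The base of the induction is $\langle U_d\rangle = \delta^{\,d-1}$. The invariant to be maintained is that after $k$ reattachments the bracket is divisible by $\delta^{\,(d-k)-1} = \delta^{\,c-1}$, where $c$ is the current number of components, and the content of the inductive step is that reattaching one more band — which merges two components into one — lowers this divisibility by at most one power of $\delta$. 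After all $b$ steps this yields $\delta^{\,d-1-b} = \delta^{\,n-1}\mid\langle D\rangle$.

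The main obstacle is precisely this inductive step. Were the band untwisted and disjoint from all disks it would be a plain saddle and the bookkeeping would be elementary; but a band that pierces a disk-circle crosses that circle's strand, and once those crossings are resolved the band can wind around and fuse, in individual states of the state sum, with loops other than the two it nominally connects, so ``one band costs one $\delta$'' fails term-by-term and must be extracted from the global structure. This is exactly where Eisermann's machinery is needed: one works throughout with classes in the skein module of a disk with marked points rather than with links, and shows that the relevant gluing morphisms interact with the reconnection in a way that is uniformly controlled by $\delta$, independently of how the bands are twisted, knotted, and linked, before reading off $\delta^{\,n-1}\mid\langle D\rangle$ at the end. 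Making that bookkeeping precise is the technical core of~\cite{eisermann}, which is why we invoke it here rather than reprove it.
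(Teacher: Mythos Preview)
The paper does not prove this theorem; it is quoted from Eisermann and simply cited as \cite{eisermann}. So there is no in-paper proof to compare against, and the relevant question is whether your sketch stands on its own or whether, like the paper, it ultimately just defers to Eisermann.

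Your proposal does the latter. The final paragraph concedes that the inductive step---a single band attachment costing at most one factor of $\delta$---is ``the technical core of~\cite{eisermann}, which is why we invoke it here rather than reprove it.'' But that inductive step \emph{is} the theorem: once you know each fusion drops the $\delta$-divisibility by at most one, the statement follows immediately from $\langle U_d\rangle=\delta^{d-1}$. Invoking Eisermann to justify it is circular. What you have written is a plausible outline of a strategy, not a proof.

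Two further issues with the outline itself. First, in your second step you assume $L$ is an $n$-component ribbon link bounding $n$ ribbon disks. The hypothesis of the theorem is weaker: $L$ bounds a ribbon \emph{surface} $\Ss$ (possibly nonorientable, possibly with components of negative Euler characteristic) with $\chi(\Ss)=n>0$. Your disk-band setup does not cover this. Second, your proposed induction is on the number of bands reattached, whereas Eisermann's actual argument (a piece of which appears in the paper as Lemma~\ref{lem:skein}) inducts on the number of ribbon singularities, using a skein identity localized at a ribbon intersection that compares $\langle D\rangle$ to brackets of links bounding ribbon surfaces with one fewer singularity or with Euler characteristic $n+1$. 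That structure is what makes the divisibility propagate; your band-by-band scheme would require a separate argument you have not supplied.
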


By combining the theorem with Equation~\ref{eq:jones}, we get the following corollary.

\begin{corollary}\label{cor:bracket}
Suppose $L$ bounds a ribbon surface $\Ss$ such that $\chi(\Ss) = n>0$, and let $D$ be a diagram for $L$.  Then $\langle D \rangle$ is divisible by $(-A^{-2} - A^2)^{n-1}$.
\end{corollary}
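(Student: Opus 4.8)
The plan is to obtain the corollary as an immediate consequence of Theorem~\ref{thm:eis} by transporting Eisermann's divisibility statement from the Jones variable to the Kauffman bracket variable $A$. Since $\chi(\Ss) = n > 0$, Theorem~\ref{thm:eis} provides a Laurent polynomial $Q(q)$ with
\[ V_L(q) = (q + q^{-1})^{n-1}\, Q(q). \]
I would then substitute $q = -A^{-2}$, as in Equation~\ref{eq:jones}. Under this substitution the left-hand side becomes $(-A)^{-3w(D)}\langle D\rangle$, while $q + q^{-1}$ becomes $-A^{-2} - A^2$; since $q \mapsto -A^{-2}$ is a ring homomorphism $\Z[q^{\pm 1}] \to \Z[A^{\pm 1}]$, it carries the displayed factorization to the identity
\[ (-A)^{-3w(D)}\langle D\rangle = (-A^{-2} - A^2)^{n-1}\, Q(-A^{-2}) \]
in $\Z[A^{\pm 1}]$. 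Finally, $(-A)^{-3w(D)}$ is a unit of $\Z[A^{\pm 1}]$, with inverse $(-A)^{3w(D)}$, so multiplying through gives $\langle D \rangle = (-A)^{3w(D)}(-A^{-2}-A^2)^{n-1}\,Q(-A^{-2})$, exhibiting $(-A^{-2}-A^2)^{n-1}$ as a divisor of $\langle D\rangle$.

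The only point needing care — and the one I expect to be the (mild) obstacle — is making sure the substitution and the resulting divisibility genuinely take place inside $\Z[A^{\pm 1}]$ rather than over a larger ring involving half-integer powers. Eisermann's statement a priori lives over $\Z[q^{\pm 1}]$, or over $\Z[q^{\pm 1/2}]$ when $L$ has an even number of components, so I would first record that, after applying Equation~\ref{eq:jones}, the Jones polynomial rewritten in the variable $A$ is an honest element of $\Z[A^{\pm 1}]$: every exponent of $A$ occurring in $\langle D\rangle$ is congruent mod $2$ to the number of crossings of $D$, and the writhe $w(D)$ has the same parity, so all exponents of $(-A)^{-3w(D)}\langle D\rangle$ are even. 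Hence the substituted identity, and the factor $Q(-A^{-2})$ appearing in it, really do lie in $\Z[A^{\pm 1}]$, and the divisibility conclusion is valid there. Once this bookkeeping is in place, everything else is the one-line specialization above.

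It may also be worth remarking, as a sanity check and for the reader's intuition, that the divisor $-A^{-2}-A^2$ is exactly the Kauffman bracket contribution of a disjoint unknotted circle — the bracket-variable shadow of the factor $q+q^{-1}$, which is the Jones polynomial of an extra unknotted, unlinked component — so the statement of the corollary is precisely what one would predict from Theorem~\ref{thm:eis} under the dictionary of Equation~\ref{eq:jones}.
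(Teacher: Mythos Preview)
Your proposal is correct and follows exactly the route the paper indicates: the paper simply asserts that the corollary is obtained ``by combining the theorem with Equation~\ref{eq:jones},'' and you have written out that combination in detail. One small remark: in the convention used here (with $t=q^2$), the Jones polynomial of any link already lies in $\Z[q^{\pm 1}]$, so the half-integer-power worry does not actually arise in the $q$-variable; your parity bookkeeping is sound but not strictly needed.
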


As another consequence of the theorem, motivated by expression for the determinant above, Eisermann defined the \emph{Jones determinant} $\det_n(V_L)$ of a link $L$ bounding a ribbon surface $\Ss$ with $\chi(\Ss) = n$,
\[ \text{det}_n(V_L) = \left(\frac{V_L(q)}{(q+q^{-1})^{n-1}} \right)_{q=i}.\]
When $K$ is a ribbon knot, we have $\det(K) = |\!\det_1(V_K)|$.  
As a part of the proof of Theorem~\ref{thm:eis}, Eisermann computed the difference of the Kauffman bracket for ribbon links related by the local move shown in Figure~\ref{fig:change}.  He proved

\begin{lemma}\label{lem:skein}\cite{eisermann}
Suppose $L$ is a link bounding a ribbon surface of Euler characteristic $n$ with ribbon number $k > 0$.  Then there exists a diagram $D$ for $L$, diagrams $D^n_0$, $D^n_1$, $D^n_2$, $D^n_3$, and $D^n_4$ for links $L^n_i$ bounding ribbon surfaces of Euler characteristic $n$ and ribbon number $k-1$ and diagrams $D^{n+1}_1$ and $D^{n+1}_2$ for links $L^{n+1}_i$ bounding ribbon surfaces of Euler characteristic $n+1$ such that
\begin{eqnarray*}
\langle D \rangle - \langle D^n_0 \rangle &=&
(A^2  - A^{-2}) \left( \langle D^{n+1}_1 \rangle - \langle D^{n+1}_2 \rangle \right) + (A^4 - 1) \left( \langle D^n_1 \rangle  - \langle D^n_2 \rangle \right) \\
&& + \, (A^{-4} - 1) \left( \langle D^n_3 \rangle - \langle D^n_4 \rangle \right).
\end{eqnarray*}
\end{lemma}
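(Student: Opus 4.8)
This identity is due to Eisermann \cite{eisermann}, and the plan is to recount the argument, which is a Kauffman bracket computation localized at a single ribbon intersection. Fix a ribbon surface $\Ss$ for $L$ with $\chi(\Ss) = n$ and exactly $k > 0$ ribbon intersections, and single out one of them, say $\rho$. After an ambient isotopy of $\Ss$ one may assume that near $\rho$ the surface sits in a standard local model: a band $B$ of $\Ss$ (a thin rectangle with two opposite sides on $L = \pd \Ss$) pokes a short finger through a disk sheet $D_0$ of $\Ss$, so that in a suitable diagram $D$ of $L$ all crossings created by this piercing lie in a small disk $\Delta_\rho$, and (in Eisermann's model) there are four of them: the two sides of the finger each meet the strand of $L$ bounding $D_0$ twice, the finger passing over $D_0$ as it enters and under $D_0$ as it leaves. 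Every diagram in the statement will agree with $D$ outside $\Delta_\rho$ and will equal one of finitely many non-crossing local tangles inside it. In particular, $D^n_0$ is the diagram obtained by pulling the finger back out of $D_0$ — the boundary-level version of the two crossing changes in Figure~\ref{fig:change} — which removes $\rho$ without changing the Euler characteristic, so $L^n_0$ bounds a ribbon surface of Euler characteristic $n$ with ribbon number $k-1$.

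The heart of the argument is a computation in the Kauffman bracket skein module of $\Delta_\rho$: expand the difference of the local tangle appearing in $D$ and the one appearing in $D^n_0$ in a basis of non-crossing tangles. Repeatedly applying the skein relation $\langle L_\times \rangle = A \langle L_0 \rangle + A^{-1} \langle L_\infty \rangle$ at the four crossings and collecting terms — with the reduced normalization $\langle \bigcirc \rangle = 1$ so that split trivial circles produced by smoothings drop out — one obtains that this difference equals
\[ (A^2 - A^{-2})(T^{+}_1 - T^{+}_2) + (A^4 - 1)(T^{0}_1 - T^{0}_2) + (A^{-4} - 1)(T^{0}_3 - T^{0}_4) \]
for suitable non-crossing local tangles $T^{+}_i$ and $T^{0}_j$. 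Gluing these tangles back into the part of $D$ outside $\Delta_\rho$ yields the diagrams named in the statement: $T^{+}_1$ and $T^{+}_2$ reconnect the finger to $D_0$, fusing that band into the sheet, and one checks this raises the Euler characteristic by one, so they give diagrams $D^{n+1}_1, D^{n+1}_2$ bounding ribbon surfaces of Euler characteristic $n+1$; the tangles $T^{0}_j$ leave the band severed and preserve $\chi = n$, giving $D^{n}_1, \dots, D^{n}_4$. In each case the corresponding surgery on $\Ss$ destroys the singularity $\rho$ while creating none, so every one of these diagrams bounds a ribbon surface of ribbon number $k-1$. Gluing the displayed expansion back into $D$ then gives precisely the stated identity.

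The main obstacle is matching the algebra to the geometry. One must fix Eisermann's local model precisely, verify that the four-crossing tangle difference expands exactly as displayed (tracking the writhe factors implicit in Equation~\ref{eq:jones}, so that the $\langle \, \rangle$'s, not the $V$'s, really do satisfy a clean relation), and — the conceptual crux — confirm from the definition of a ribbon surface that reconnecting the finger to the sheet raises $\chi$ by exactly one while the remaining resolutions preserve it, and that all of them strictly decrease the number of ribbon singularities. It is this Euler-characteristic bookkeeping that forces the specific coefficients $A^2 - A^{-2}$, $A^4 - 1$, and $A^{-4} - 1$, and hence, after specializing $A = e^{-i\pi/4}$, the bound of Theorem~\ref{thm:main3}.
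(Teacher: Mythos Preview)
Your proposal is correct and takes essentially the same approach as the paper: both treat the lemma as a direct citation of Eisermann's skein computation at a ribbon singularity, with the paper simply pointing to Equation~(7) of~\cite{eisermann} and the local model of Figure~\ref{fig:change}, while you sketch the four-crossing Kauffman bracket expansion and the Euler-characteristic bookkeeping that underlie that equation. There is no substantive difference in route, only in the level of detail supplied.
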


\begin{proof}
If $L$ bounds a ribbon surface of Euler characteristic $n$ and ribbon number $k$, there is a diagram of $L$ locally identical to the left frame of Figure~\ref{fig:change}.  The equation above then agrees with Equation (7) from~\cite{eisermann}.
\end{proof}

Lemma~\ref{lem:skein} is a key ingredient in the proof of the next theorem, which specializes to Theorem~\ref{thm:main3} when $L$ is a ribbon link.

\begin{theorem}\label{thm:det}
Suppose $L$ is a link bounding a ribbon surface of Euler characteristic $n >0$ with ribbon number $k \geq 0$.  Then
\[ |\text{det}_n(V_L)| \leq 9^k.\]
\end{theorem}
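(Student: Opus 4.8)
The plan is to prove the bound by induction on the ribbon number $k$, keeping the Euler characteristic $n>0$ fixed. For the base case $k=0$ the ribbon surface $\Ss$ is embedded. Every component of $\Ss$ has nonempty boundary, hence Euler characteristic at most $1$, with equality exactly for disk components; since $\chi(\Ss)=n\geq 1$ it follows that $\Ss$ has at least $n$ disk components, and each such disk component is split off from the rest of $\Ss$ by a small $2$--sphere around it. Thus $L=U_d\sqcup L''$, where $U_d$ is a $d$--component unlink with $d\geq n$ and $L''$ bounds the non-disk components of $\Ss$, whose Euler characteristics sum to $n-d\leq 0$. Using $V_{U_d}(q)=(q+q^{-1})^{d-1}$ and the split-union formula $V_{A\sqcup B}(q)=(q+q^{-1})V_A(q)V_B(q)$, one gets $V_L(q)=(q+q^{-1})^{d}V_{L''}(q)$ when $L''\neq\emptyset$, so $\det_n(V_L)=\bigl((q+q^{-1})^{\,d-n+1}V_{L''}(q)\bigr)_{q=i}=0$ since $d-n+1\geq 1$ and $(q+q^{-1})|_{q=i}=0$; and $\det_n(V_L)=1$ when $L=U_n$. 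In either case $|\det_n(V_L)|\leq 1=9^{0}$.

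For the inductive step, suppose $k\geq 1$ and the bound holds at ribbon number $k-1$ (for links bounding ribbon surfaces of Euler characteristic $n$). Work with Kauffman brackets via $q=-A^{-2}$, so that $q+q^{-1}=-A^{-2}-A^{2}$ and $\det_n$ is read off, up to a factor of modulus $1$ coming from the writhe, by dividing the bracket by $(-A^{-2}-A^{2})^{n-1}$ and evaluating at $A=e^{-i\pi/4}$, where $-A^{-2}-A^{2}=0$, $A^{4}=A^{-4}=-1$, and $|A^{2}-A^{-2}|=2$. By Corollary~\ref{cor:bracket}, the bracket of a diagram of a link bounding a ribbon surface of Euler characteristic $m$ is divisible by $(-A^{-2}-A^{2})^{m-1}$; for a link $L_M$ bounding a ribbon surface of Euler characteristic $n$ put $d(L_M):=\bigl(\langle D_M\rangle/(-A^{-2}-A^{2})^{n-1}\bigr)_{A=e^{-i\pi/4}}$, so that $|d(L_M)|=|\det_n(V_{L_M})|$. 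Now take the diagrams supplied by Lemma~\ref{lem:skein}, divide the displayed identity by $(-A^{-2}-A^{2})^{n-1}$, and evaluate at $A=e^{-i\pi/4}$: the brackets $\langle D^{n+1}_1\rangle$ and $\langle D^{n+1}_2\rangle$ carry an extra factor $(-A^{-2}-A^{2})^{n}$, so after the division they still contain the factor $-A^{-2}-A^{2}$, which vanishes at $A=e^{-i\pi/4}$, killing the first summand. What remains, using $|A^{4}-1|=|A^{-4}-1|=2$, is
\[ |\det_n(V_L)|=|d(L)|\;\leq\;|d(L^{n}_0)|+2|d(L^{n}_1)|+2|d(L^{n}_2)|+2|d(L^{n}_3)|+2|d(L^{n}_4)|\;\leq\;9\max_{0\leq i\leq 4}|\det_n(V_{L^{n}_i})|. \]
Each $L^{n}_i$ bounds a ribbon surface of Euler characteristic $n$ with ribbon number $k-1$, so the inductive hypothesis gives $|\det_n(V_{L^{n}_i})|\leq 9^{k-1}$, hence $|\det_n(V_L)|\leq 9^{k}$, completing the induction.

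The main obstacle, beyond bookkeeping the seven auxiliary diagrams, is handling the division by $(-A^{-2}-A^{2})^{n-1}$ correctly at the degenerate evaluation point $A=e^{-i\pi/4}$: one must invoke Corollary~\ref{cor:bracket} at \emph{both} Euler characteristics $n$ and $n+1$, so that every quotient appearing is an honest Laurent polynomial (making the evaluation legitimate) and so that the Euler-characteristic-$(n+1)$ terms are annihilated by the vanishing of $-A^{-2}-A^{2}$ there. Once this is set up, the numerology is forced: the coefficients $A^{4}-1$ and $A^{-4}-1$ each have modulus $2$, and $1+2+2+2+2=9$ is exactly the constant in the statement, while the reduction of the ribbon number by one at each step is precisely what Lemma~\ref{lem:skein} delivers.
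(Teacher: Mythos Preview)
Your proof is correct and follows essentially the same route as the paper's: induction on the ribbon number $k$, with the base case handled by observing that an embedded ribbon surface of Euler characteristic $n>0$ forces an $n$-component unlink to split off, and the inductive step handled by dividing Eisermann's skein identity (Lemma~\ref{lem:skein}) by $(-A^{-2}-A^{2})^{n-1}$, invoking Corollary~\ref{cor:bracket} to kill the Euler-characteristic-$(n+1)$ terms at $A=e^{-i\pi/4}$, and applying the triangle inequality with $|A^{\pm 4}-1|=2$ to obtain the factor $1+2+2+2+2=9$.
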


\begin{proof}
Let $\omega = e^{-i \pi/4}$.  Using the definition of the Jones determinant along with Equation~\ref{eq:jones} and the substitution $q = -A^{-2}$, we have that for any diagram $D$ for $L$,
\begin{equation}\label{eq:det}
|\text{det}_n(V_L)| = \left| \frac{V_L(q)}{(q+q^{-1})^{n-1}}_{q=i} \right| = \left| \frac{(-A)^{-3w(D)} \langle D \rangle}{(-A^{-2} - A^2)^{n-1}}_{A = \omega}\right| = \left| \frac{ \langle D \rangle}{(-A^{-2} - A^2)^{n-1}}_{A = \omega}\right|.
\end{equation}
We induct on the ribbon number $k$ of the surface, call it $\Ss$.  For the base case, suppose that $k=0$.  Since $\chi(\Ss) = n$, it follows that $\Ss$ has at least $n$ disk components (since the maximal Euler characteristic of each component of $\Ss$ is one).  By assumption, $k=0$; hence, $\Ss$ is embedded and $L$ is the split union of an $n$-component unlink $U$ and another (possibly empty) link $L'$.  Let $D = D' \sqcup D''$ be a split diagram in which $D'$ is a crossingless diagram for $U$.  If $L'$ is the empty link, then
\[ |\text{det}_n(V_L)| = \left| \frac{ \langle D' \rangle}{(-A^{-2} - A^2)^{n-1}}_{A = \omega}\right| =\left| \frac{(-A^{-2} - A^2)^{n-1}}{(-A^{-2} - A^2)^{n-1}}_{A = \omega}\right| =1.\]
If $L'$ is not the empty link, then
\[ |\text{det}_n(V_L)| = \left| \frac{ \langle D \rangle}{(-A^{-2} - A^2)^{n-1}}_{A = \omega}\right| =\left| \frac{(-A^{-2} - A^2)^{n}\langle D'' \rangle}{(-A^{-2} - A^2)^{n-1}}_{A = \omega}\right| =0.\]
In either case, $|\det_n(V_L)| \leq 9^0$.

Now, suppose that the inequality holds for any link bounding a ribbon surface of Euler characteristic $n>0$ and with ribbon number $k-1$.  Then there are links and diagrams labeled and yielding the skein identity as in Lemma~\ref{lem:skein}.  Corollary~\ref{cor:bracket} implies that $(-A^{-2}-A^2)^n$ divides $\langle D^{n+1}_i \rangle$, which means that $-A^{-2}-A^2$ is a factor of $\frac{ \langle D^{n+1}_i \rangle}{(-A^{-2} - A^2)^{n-1}}$.  It follows that
\[ \left| \frac{ \langle D^{n+1}_i \rangle}{(-A^{-2} - A^2)^{n-1}}_{A = \omega}\right| = 0.\]
If we rewrite the equation from Lemma~\ref{lem:skein} by dividing by $(-A^{-2} - A^2)^{n-1}$ and evaluating at $A =\omega$, we get
\begin{eqnarray*}
\frac{\langle D \rangle}{(-A^{-2} - A^2)^{n-1}}_{A=\omega} &=& \frac{\langle D^n_0 \rangle}{(-A^{-2} - A^2)^{n-1}}_{A=\omega} - \frac{2\langle D^n_1 \rangle}{(-A^{-2} - A^2)^{n-1}}_{A=\omega} \\
&&+ \,\,\frac{2\langle D^n_2 \rangle}{(-A^{-2} - A^2)^{n-1}}_{A=\omega} - \frac{2\langle D^n_3 \rangle}{(-A^{-2} - A^2)^{n-1}}_{A=\omega} \\
&&+ \,\,\frac{2\langle D^n_4 \rangle}{(-A^{-2} - A^2)^{n-1}}_{A=\omega}.
\end{eqnarray*}
Taking absolute values, using Equation~\ref{eq:det}, and applying the triangle inequality yields
\[ |\text{det}_n(V_L)| \leq |\text{det}_n(V_{L^n_0})| + 2|\text{det}_n(V_{L^n_1})| + 2|\text{det}_n(V_{L^n_2})| + 2|\text{det}_n(V_{L^n_3})| + 2|\text{det}_n(V_{L^n_4})|.\]
Finally, we note that the corresponding links $L^n_i$ bound ribbon surfaces with Euler characteristic $n$ and $k-1$ ribbon intersections, and as such we can use our inductive hypothesis to conclude
\[ |\text{det}_n(V_L)| \leq 9^{k-1} + 2 \cdot 9^{k-1} + 2 \cdot 9^{k-1}+2 \cdot 9^{k-1}+2 \cdot 9^{k-1} = 9^k.\]
\end{proof}

\section{Questions and conjectures}\label{sec:conj}

In this section, we include several interesting avenues of investigation for future research.  The first involves the behavior of ribbon knots under connected sums.  If $K_1$ and $K_2$ are ribbon knots, we can take the connected sum of disks that minimize $r(K_i)$ to show that $r(K_1 \# K_2)$ is at most $r(K_1) + r(K_2)$, but it is unknown whether equality holds in general.

\begin{conjecture}
If $K_1$ and $K_2$ are ribbon knots, then
\[ r(K_1 \# K_2) = r(K_1) + r(K_2).\]
\end{conjecture}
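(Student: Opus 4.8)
The inequality $r(K_1 \# K_2) \le r(K_1) + r(K_2)$ is immediate: choosing ribbon disks $\D_i$ for $K_i$ with $r(\D_i) = r(K_i)$ and forming their boundary connected sum produces a ribbon disk for $K_1 \# K_2$ with exactly $r(\D_1) + r(\D_2)$ ribbon intersections. All of the content is in the reverse inequality, and the natural plan is a decomposing-sphere argument. Let $\D$ be a ribbon disk for $K = K_1 \# K_2$ with $r(\D) = r(K)$, and let $S \subset S^3$ be a $2$-sphere meeting $K$ transversely in two points and realizing the connected sum, so that $(S^3, K)$ is cut into summands $(B_1, A_1)$ and $(B_2, A_2)$ with $A_i$ a knotted arc closing to $K_i$. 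If one could isotope $\D$ rel $\pd \D$ until $\D \cap S$ is a single unknotted arc joining the two points of $S \cap K$, then $S$ would cut $\D$ into ribbon disks $\D_1, \D_2$ with $\pd \D_i = K_i$ and $r(\D) = r(\D_1) + r(\D_2) \ge r(K_1) + r(K_2)$, as desired.

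The steps would be: (i) put $\D$ in general position with respect to $S$, arranging in addition that $S$ is disjoint from a neighborhood of each ribbon intersection of $\D$, so that $\D \cap S$ is a $1$-manifold — a collection of circles together with at least one arc through the two boundary points — and so that cutting along $S$ creates no new ribbon intersections; (ii) remove the circle components of $\D \cap S$ by innermost-disk swaps on $S$; (iii) reduce to a single arc component by pushing outermost subdisks of $\D$ across the corresponding hemispheres of $S$ in $S^3$; and (iv) cut $\D$ along the resulting arc. The upper bound already in hand shows that if such a splitting exists it is automatically optimal, so the whole problem reduces to realizing it geometrically.

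The hard part — and the reason this remains a conjecture — is carrying out the reductions in (ii) and (iii) \emph{without increasing the total ribbon number}. An innermost circle of $\D \cap S$ on $S$ bounds a disk $E \subset S$ with $E \cap \D = \pd E$; since $\D$ is a disk, $\pd E$ also bounds a subdisk $E' \subset \D$, and one would like to replace $E'$ by $E$ pushed slightly off $S$. But $E'$ may contain ribbon intersections of $\D$, and there is no elementary reason that the replacement can be arranged to control them; the same obstruction appears for the outermost-arc moves of (iii). In effect this approach reduces the conjecture to a strong splitting statement — that every minimal ribbon disk for $K_1 \# K_2$ is isotopic to a boundary connected sum of ribbon disks for $K_1$ and $K_2$ — which is a close cousin of the (also open) problem of whether \emph{every} ribbon disk for $K_1 \# K_2$ decomposes, and of the analogous additivity question for the fusion number $\F$. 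No Alexander-polynomial or Seifert-form obstruction is known to be sharp enough to force the splitting.

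Along the way one should record the cases where the conjecture can be proved unconditionally. If $r(K_i) = g(K_i)$ for both $i$ — for instance if each $K_i = T_{p_i,q_i} \# \overline{T_{p_i,q_i}}$, by Proposition~\ref{prop:gensq} — then additivity follows at once from the upper bound together with Lemma~\ref{lem:genus} and the additivity of the Seifert genus. More generally, applying Theorem~\ref{thm:main1} to the product $\Delta_{K_1}\Delta_{K_2}$ gives a lower bound on $r(K_1 \# K_2)$, but since $\deg f \le r$ there, this recovers only $r(K_1 \# K_2) \ge g(K_1) + g(K_2)$ when the $\Delta_{K_i}$ have top degree $2g(K_i)$, which is the genus bound again. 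Pushing past the genus bound in general seems to require precisely the geometric splitting described above, or else a new additive lower bound for $r$ coming from Floer-theoretic invariants of ribbon disks.
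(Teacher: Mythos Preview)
This statement is a \emph{conjecture}, not a theorem: the paper does not prove it and offers no proof to compare against. In Section~\ref{sec:conj} the authors simply record (a) the easy inequality $r(K_1 \# K_2) \le r(K_1) + r(K_2)$ via boundary-summing minimizing disks, and (b) the observation that additivity holds whenever $g(K_i) = r(K_i)$, by Lemma~\ref{lem:genus} and additivity of Seifert genus --- exactly the two points you isolate.

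Your write-up is therefore not a proof, and you are honest about this: you correctly identify the obstruction in the decomposing-sphere approach (steps (ii) and (iii) may increase the ribbon number) and flag the problem as open. That is the right assessment. Your discussion of the splitting strategy and its failure mode goes well beyond anything in the paper, which makes no attempt at the reverse inequality at all. Just be aware that what you have produced is an exposition of why the conjecture is hard together with the known partial results, not a proof --- and that this matches, rather than falls short of, what the paper itself contains.
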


We note that by Lemma~\ref{lem:genus} and the additivity of genus under connected sum, the conjecture holds for any ribbon knots $K_1$ and $K_2$ that satisfy $g(K_i) = r(K_i)$.  For instance, it holds for the knots $T_{p,q} \# \overline{T_{p,q}}$ discussed in Proposition~\ref{prop:gensq}.

We can also consider for which knots the inequality in Lemma~\ref{lem:underpass} is an equality.  We conjecture

\begin{conjecture}
If $K$ is a non-ribbon alternating knot, then
\[ r(K \# \overline{K}) = c(K) - 1.\]
\end{conjecture}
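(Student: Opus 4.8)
The plan is to prove the two inequalities separately; the upper bound is elementary, while the lower bound is the substance of the conjecture. For the upper bound, let $D$ be a reduced alternating diagram for $K$, so that $c(D) = c(K)$ by the resolution of the Tait conjecture. Since $K$ is non-ribbon it is in particular nontrivial, hence $c(D) \geq 1$; and since $D$ is alternating, traversing $K$ one encounters crossings that alternate between over and under, so $D$ has no two consecutive under-crossings and its maximal bridge length is $\ell(D) = 1$. The remark following Lemma~\ref{lem:underpass}, applied with $J = K$, then gives
\[ r(K \# \overline{K}) \leq c(D) - \ell(D) = c(K) - 1. \]
This half uses only that $K$ is alternating and nontrivial; and in fact, for alternating $K$ one has $\min_{D'}\bigl(c(D') - \ell(D')\bigr) = c(K) - 1$ by Kidwell--Thistlethwaite-type bounds relating $c - \ell$ to the breadth of the Kauffman polynomial~\cite{Kidwell,Thistlethwaite}, so the estimate $c(K) - 1$ is already optimal among all symmetric-union constructions --- the conjecture asserts that no other ribbon disk for $K \# \overline{K}$ does better.

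For the lower bound, suppose $K \# \overline{K}$ bounds a ribbon disk with $r := r(K \# \overline{K})$ ribbon intersections. By multiplicativity of the Alexander polynomial under connected sum together with $\Delta_{\overline{K}}(t) = \Delta_K(t^{-1})$ and the symmetry of $\Delta_K$, we have $\Delta_{K \# \overline{K}}(t) = \Delta_K(t)\Delta_K(t^{-1})$, equal to $\Delta_K(t)^2$ up to a unit. By Theorem~\ref{thm:main1} we may write $\Delta_{K \# \overline{K}}(t) = f(t) f(t^{-1})$ with $f(t) = a_0 + a_1 t + \dots + a_r t^r \in \Z[t]$ and $|a_i| \leq \binom{r}{i}$, so it suffices to show that $\Delta_K(t)^2 \notin \R_{c(K) - 2}$, i.e.\ that no such $f$ exists once $r \leq c(K) - 2$. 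Working in the UFD $\Q[t, t^{-1}]$, factor $\Delta_K$ into irreducibles, each chosen self-reciprocal or occurring in a reciprocal pair; comparing the two sides of $f(t) f(t^{-1}) = \Delta_K(t)\Delta_K(t^{-1})$ shows that, up to a unit and a Laurent shift, $f$ is a product of these factors whose width (top degree minus bottom degree) equals $\operatorname{span}(\Delta_K)$, which is $2g(K)$ for an alternating knot by Crowell's theorem. This recovers $r \geq 2g(K)$ (Lemma~\ref{lem:genus}), and since $2g(K) \leq c(K) - 1$ for alternating $K$, with equality exactly when $K = T_{2,n}$, it already proves the conjecture for the torus knots $T_{2,n}$, consistently with Proposition~\ref{prop:gensq}. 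When every irreducible factor of $\Delta_K$ is self-reciprocal --- for instance whenever $\Delta_K$ is irreducible --- the factorization forces $f = \pm t^{j}\Delta_K(t)$, so the coefficient bounds reduce to: for some admissible shift $j$, $|(\Delta_K)_k| \leq \binom{r}{j+k}$ for all $k$. One would then show these must fail for $r \leq c(K) - 2$, exploiting that $\sum_k |(\Delta_K)_k| = |\Delta_K(-1)| = \det(K)$, which is at least $c(K)$ by Bankwitz's inequality, together with unimodality constraints on the coefficients of Alexander polynomials of alternating knots (Fox's trapezoidal conjecture).

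The step I expect to be the real obstacle is precisely this last one: Theorem~\ref{thm:main1} alone is too weak to reach $c(K) - 1$ in general, because $\operatorname{span}(\Delta_K) = 2g(K)$ is typically far below $c(K) - 1$ and the crude coefficient inequality does not bridge the gap. Already for $K = 7_4$ it yields only $r(7_4 \# \overline{7_4}) \geq 5$, whereas the conjecture predicts $6$; closing such gaps requires the finer constraints on realizable Alexander polynomials supplied by the ribbon-code calculus of Section~\ref{sec:code} --- in effect a description of the set $\R_r$ itself --- which is carried out here only for $r \leq 3$, and for $r = 4$ in the forthcoming work~\cite{polymath}. A proof in full generality therefore seems to demand either a uniform handle on $\R_r$ for all $r$, or --- what I believe is the genuine difficulty --- a fundamentally new lower bound for ribbon number that detects the crossing number of $K$ directly rather than only through its Alexander polynomial, thereby turning the paper's guiding heuristic into a sharp statement for the knots $K \# \overline{K}$.
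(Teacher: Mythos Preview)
The statement you are attempting to prove is listed in the paper as a \emph{conjecture}, not a theorem; the paper offers no proof and explicitly places it among open problems in Section~\ref{sec:conj}, noting only that it has been verified for $3_1$, $4_1$, $5_1$, $5_2$, and the torus knots $T_{2,n}$. There is therefore no proof in the paper to compare your argument against.

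Your treatment of the upper bound $r(K \# \overline{K}) \leq c(K) - 1$ is correct and is exactly the reasoning implicit in the paper: a reduced alternating diagram has $c(D) = c(K)$ and $\ell(D) = 1$, so the remark following Lemma~\ref{lem:underpass} applies. Your discussion of the lower bound is honest and accurate: you correctly observe that Theorem~\ref{thm:main1} and Corollary~\ref{cor:main1} are in general too weak (your example $7_4$ is apt), that the genus bound of Lemma~\ref{lem:genus} settles only the case $2g(K) = c(K) - 1$, i.e.\ the torus knots $T_{2,n}$, and that closing the remaining gap would require either uniform control of $\R_r$ for all $r$ or a genuinely new obstruction. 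That is precisely why the paper records the statement as a conjecture. In short, your proposal is not a proof and you say so yourself; the gap you identify is the content of the conjecture, not a flaw in your reasoning.
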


Note that this conjecture is not true for ribbon knots, since $r(6_1) = 2$ and the above argument implies that $r(6_1 \# \overline{6_1}) \leq 4$.  Based on our data, however, we know that the conjecture holds for the knots. $3_1$, $4_1$, $5_1$, $5_2$, and any torus knot of the form $T_{p,2}$.

\begin{question}
Given the usefulness of Alexander polynomials in our computations, can more sophisticated tools such as knot Floer homology give effective lower bounds on ribbon numbers?
\end{question}

We can also consider further investigation of ribbon numbers of links:

\begin{question}
How sharp is the bound $|\det_n(V_L)| \leq 9^{r(L)}$ for ribbon links given by Theorem~\ref{thm:main3}?
\end{question}

For ribbon knots, Corollary~\ref{cor:main1} states a stronger result, which appears to be sharp for low ribbon numbers based on our data.  Can the exponent of 9 in Theorem~\ref{thm:main3} be replaced with something smaller?

\begin{question}
Is there a prescriptive formula for computing (some elements of) $\R_r$ for larger values of $r$?
\end{question}

Forthcoming work~\cite{polymath} computes $\R_4$ (by exhaustion, as in Propositions~\ref{prop:r2} and~\ref{prop:r3} above) and applies the computation to find ribbon numbers for many 12-crossing ribbon knots.  With more data, patterns in the elements of $\R_r$ may begin to emerge.

\bibliographystyle{amsalpha}
\bibliography{ribbon}

\newcommand{\etalchar}[1]{$^{#1}$}
\providecommand{\bysame}{\leavevmode\hbox to3em{\hrulefill}\thinspace}
\providecommand{\MR}{\relax\ifhmode\unskip\space\fi MR }
\providecommand{\MRhref}[2]{%
  \href{http://www.ams.org/mathscinet-getitem?mr=#1}{#2}
}
\providecommand{\href}[2]{#2}
\begin{thebibliography}{FNOP24}

\bibitem[ABC{\etalchar{+}}]{polymath}
Matthew Aronin, Aaron Banse, David Cates, Ansel Goh, Benjamin Kirn, Josh
  Krienke, Minyi Liang, Samuel Lowery, Ege Malkoc, Jeffrey Meier, Max Natonson,
  Veljko Radi\c{c}, Yavuz Rodoplu, Bhaswati Saha, Evan Scott, Roman Simkins,
  and Alexander Zupan, \emph{Ribbon numbers of 12-crossing knots}, in
  preparation.

\bibitem[Ace14]{aceto}
Paolo Aceto, \emph{Symmetric ribbon disks}, J. Knot Theory Ramifications
  \textbf{23} (2014), no.~9, 1450048. \MR{3268984}

\bibitem[{Bai}21]{bai}
Sheng {Bai}, \emph{{Alexander polynomial of ribbon knots}}, arXiv e-prints
  (2021), arXiv:2103.07128.

\bibitem[BEMn90]{bleiler-eudave}
Steven~A. Bleiler and Mario Eudave Mu\~{n}oz, \emph{Composite ribbon number one
  knots have two-bridge summands}, Trans. Amer. Math. Soc. \textbf{321} (1990),
  no.~1, 231--243. \MR{968881}

\bibitem[CKS04]{CKS}
Scott Carter, Seiichi Kamada, and Masahico Saito, \emph{Surfaces in 4-space},
  Encyclopaedia of Mathematical Sciences, vol. 142, Springer-Verlag, Berlin,
  2004, Low-Dimensional Topology, III. \MR{2060067}

\bibitem[Eis09]{eisermann}
Michael Eisermann, \emph{The {J}ones polynomial of ribbon links}, Geom. Topol.
  \textbf{13} (2009), no.~2, 623--660. \MR{2469525}

\bibitem[EL07]{lamm2}
Michael Eisermann and Christoph Lamm, \emph{Equivalence of symmetric union
  diagrams}, J. Knot Theory Ramifications \textbf{16} (2007), no.~7, 879--898.
  \MR{2354266}

\bibitem[FNOP24]{FNOP}
Stefan Friedl, Matthias Nagel, Patrick Orson, and Mark Powell, \emph{A survey
  of the foundations of four-manifold theory in the topological category},
  2024.

\bibitem[Fri04]{Friedl}
Stefan Friedl, \emph{Eta invariants as sliceness obstructions and their
  relation to {Casson}-{Gordon} invariants}, Algebr. Geom. Topol. \textbf{4}
  (2004), 893--934.

\bibitem[Hil12]{Hillman}
Jonathan Hillman, \emph{Algebraic invariants of links}, 2nd ed. ed., Ser. Knots
  Everything, vol.~52, Singapore: World Scientific, 2012.

\bibitem[JMZ20]{JMZ}
Andr\'{a}s Juh\'{a}sz, Maggie Miller, and Ian Zemke, \emph{Knot cobordisms,
  bridge index, and torsion in {F}loer homology}, J. Topol. \textbf{13} (2020),
  no.~4, 1701--1724. \MR{4186142}

\bibitem[Kid87]{Kidwell}
Mark~E. Kidwell, \emph{On the degree of the {B}randt-{L}ickorish-{M}illett-{H}o
  polynomial of a link}, Proc. Amer. Math. Soc. \textbf{100} (1987), no.~4,
  755--762. \MR{894450}

\bibitem[KS03]{KidStoim}
Mark~E. Kidwell and Alexander Stoimenow, \emph{Examples relating to the
  crossing number, writhe, and maximal bridge length of knot diagrams},
  Michigan Math. J. \textbf{51} (2003), no.~1, 3--12. \MR{1960917}

\bibitem[KSTI21]{KSTI}
Kengo Kishimoto, Tetsuo Shibuya, Tatsuya Tsukamoto, and Tsuneo Ishikawa,
  \emph{Alexander polynomials of simple-ribbon knots}, Osaka J. Math.
  \textbf{58} (2021), no.~1, 41--57. \MR{4291118}

\bibitem[KT21]{KanTak}
Taizo Kanenobu and Kota Takahashi, \emph{Classification of ribbon 2-knots of
  1-fusion with length up to six}, Topology Appl. \textbf{301} (2021), Paper
  No. 107521, 12. \MR{4312971}

\bibitem[Lam00]{lamm1}
Christoph Lamm, \emph{Symmetric unions and ribbon knots}, Osaka J. Math.
  \textbf{37} (2000), no.~3, 537--550. \MR{1789436}

\bibitem[Lam21]{lamm3}
\bysame, \emph{The search for nonsymmetric ribbon knots}, Exp. Math.
  \textbf{30} (2021), no.~3, 349--363. \MR{4309311}

\bibitem[Lev67]{Levine}
Jerome~P. Levine, \emph{A method for generating link polynomials}, Am. J. Math.
  \textbf{89} (1967), 69--84.

\bibitem[LM24]{knotinfo}
Charles Livingston and Allison~H. Moore, \emph{Knotinfo: Table of knot
  invariants}, URL: \url{knotinfo.math.indiana.edu}, August 2024.

\bibitem[Miz06]{mizuma}
Yoko Mizuma, \emph{An estimate of the ribbon number by the {J}ones polynomial},
  Osaka J. Math. \textbf{43} (2006), no.~2, 365--369. \MR{2262340}

\bibitem[MT08]{MizTsu}
Yoko Mizuma and Yukihiro Tsutsumi, \emph{Crosscap number, ribbon number and
  essential tangle decompositions of knots}, Osaka J. Math. \textbf{45} (2008),
  no.~2, 391--401. \MR{2441946}

\bibitem[MZ22]{FHRkS}
Jeffrey Meier and Alexander Zupan, \emph{Generalized square knots and homotopy
  4-spheres}, J. Differential Geom. \textbf{122} (2022), no.~1, 69--129.
  \MR{4507471}

\bibitem[MZ23]{qdisks}
Jeffrey {Meier} and Alexander {Zupan}, \emph{{Knots bounding non-isotopic
  ribbon disks}}, arXiv e-prints (2023), arXiv:2310.17564.

\bibitem[Sto03]{stoim2}
Alexander Stoimenow, \emph{On the coefficients of the link polynomials},
  Manuscripta Math. \textbf{110} (2003), no.~2, 203--236. \MR{1962535}

\bibitem[Tan00]{tanaka}
Toshifumi Tanaka, \emph{On bridge numbers of composite ribbon knots}, J. Knot
  Theory Ramifications \textbf{9} (2000), no.~3, 423--430. \MR{1753803}

\bibitem[Thi88]{Thistlethwaite}
Morwen~B. Thistlethwaite, \emph{Kauffman's polynomial and alternating links},
  Topology \textbf{27} (1988), no.~3, 311--318. \MR{963633}

\bibitem[Yas01]{YasudaBase}
Tomoyuki Yasuda, \emph{Crossing and base numbers of ribbon 2-knots}, J. Knot
  Theory Ramifications \textbf{10} (2001), no.~7, 999--1003. \MR{1867105}

\bibitem[Yas06]{YasudaEval}
\bysame, \emph{An evaluation of the crossing number on ribbon 2-knots}, J. Knot
  Theory Ramifications \textbf{15} (2006), no.~1, 1--9. \MR{2204492}

\bibitem[Yas18]{YasudaAlex}
\bysame, \emph{Ribbon crossing numbers, crossing numbers, and {A}lexander
  polynomials}, Topology Appl. \textbf{247} (2018), 72--80. \MR{3846217}

\end{thebibliography}

\end{document}